\newtheorem*{rep@theorem}{\rep@title}
\newcommand{\newreptheorem}[2]{%
\newenvironment{rep#1}[1]{%
 \def\rep@title{#2 \ref{##1}}%
 \begin{rep@theorem}}%
 {\end{rep@theorem}}}
\newtheorem{theorem}{Theorem}
\newtheorem{lemma}[theorem]{Lemma}
\newtheorem{sublemma}[theorem]{Sublemma}
\newtheorem{conjecture}{Conjecture}
\newtheorem{proposition}[theorem]{Proposition}
\theoremstyle{definition}
\newtheorem{question}[conjecture]{Question}
\newcommand{\del}{\partial}
\newcommand{\abs}[1]{\left\lvert #1 \right\rvert}
\newcommand{\co}{\colon\thinspace}
\DeclareMathOperator{\Conf}{Conf}
\DeclareMathOperator{\Seg}{Seg}
\DeclareMathOperator{\sign}{sign}
\DeclareMathOperator{\crit}{crit}
\begin{document}

\title[Disk and segment configurations]{Restricting cohomology classes to disk and segment configuration spaces}
\author{Hannah Alpert}
\address{MIT\\ Cambridge, MA 02139 USA}
\email{hcalpert@math.mit.edu}
\begin{abstract}
The configuration space of $n$ labeled disks of radius $r$ inside the unit disk is denoted $\Conf_{n, r}(D^2)$.  We study how the cohomology of this space depends on $r$.  In particular, given a cohomology class of $\Conf_{n, 0}(D^2)$, for which $r$ does its restriction to $\Conf_{n, r}(D^2)$ vanish?  A related question: given the configuration space $\Seg_{n, r}(D^2)$ of $n$ labeled, oriented segments of length $r$, it has a map to $(S^1)^n$ that records the direction of each segment.  For which $r$ does this angle map have a continuous section?  The paper consists of a collection of partial results, and it contains many questions and conjectures.
\end{abstract}
\maketitle

\section{Introduction}

The purpose of this paper is to introduce a collection of new questions for further research on the topic of configurations of disks or segments in the unit disk.  The paper includes many partial results and many questions and conjectures.  In this introduction section, we describe an elementary version of the main questions, to give some of the flavor.  The more technical setup for the rest of the paper appears in Section~\ref{background}.

The question about segments is, what is the maximum length $r$ such that $n$ segments of length $r$ can spin independently in the unit disk?  In order for this question to make sense, we need a definition of spinning independently.  Let $\Seg_{n, r}(D^2)$ denote the space of all ways to arrange $n$ disjoint, labeled, oriented segments of length $r$ in the unit disk $D^2$.  For each configuration in $\Seg_{n, r}(D^2)$ we can extract the directions of the segments and forget the translations; this defines the \textbf{\emph{angle map}}
\[\alpha_{n, r} \co \Seg_{n, r}(D^2) \rightarrow (S^1)^n.\]
We say that $n$ segments of length $r$ can \textbf{\emph{spin independently}} in the unit disk if there is a continuous section of the angle map; that is, there is a continuous map
\[f \co (S^1)^n \rightarrow \Seg_{n, r}(D^2)\]
such that for all $n$--tuples of angles $(\theta_1, \ldots, \theta_n) \in (S^1)^n$, the angles in the corresponding configuration $f(\theta_1, \ldots, \theta_n)$ are the same $n$--tuple $(\theta_1, \ldots, \theta_n)$.

An example is the case $n = 2$: what is the maximum length such that two segments can spin independently?  The most straightforward way to spin two segments independently is to draw two disjoint disks of radius $\frac{1}{2}$ in the unit disk, and put one segment of length $1$ in each disk at the specified angle.  But there is another map in which the segments can be made longer, depicted in Figure~\ref{two-segments}.  The maximum length for which two segments can fit at right angles in the unit disk is $1.6$.  At this length, we can specify $f(\theta_1, \theta_2)$ by holding segment $2$ against the boundary pointing in the $\theta_2$ direction, and putting segment $1$ pointing in the $\theta_1$ direction inside the smaller disk tangent to both segment $2$ and the original unit disk.  So, using this map it is possible for two segments of length $1.6$ to spin independently, but not of any larger length because it would be impossible to fit the segments at right angles. 

\begin{figure}
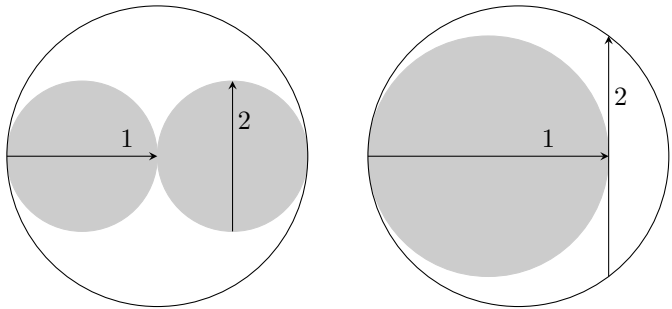

\begin{center}
\TwoSegments
\end{center}
\caption{We can spin independently two segments of length~$1$ by choosing disjoint disks for them, shown left.  To spin independently two segments of length $1.6$, shown right, we can keep the ends of segment~$2$ on the boundary and spin segment~$1$ in the largest disk remaining.}\label{two-segments}
\end{figure}

It is unknown how this maximum $r$ behaves as $n$ gets large, or even whether it approaches zero.  That is, we would like to prove the following statement: there does not exist $r_0 > 0$ such that for all $n$, it is possible for $n$ segments of length $r_0$ to spin independently in the unit disk.  This question is explored in Theorem~\ref{less-than-1} and in Question~\ref{limit-zero}.

This question about segments arose as part of a larger project about configuration spaces of disks.  Our study of disk configuration spaces is based on the framework introduced by Baryshnikov, Bubenik, and Kahle in~\cite{Baryshnikov13}.  Let $\Conf_{n, r}(D^2)$ denote the space of all ways to arrange $n$ disjoint, labeled, open disks of radius $r$ in the unit disk $D^2$.  The overarching question is, how does the topology of $\Conf_{n, r}(D^2)$ change as $r$ varies and $n$ remains fixed?

One way to detect topological information in $\Conf_{n, r}(D^2)$ is to keep track of the directions of vectors between pairs of disks.  For any directed graph $G$ with vertex set $\{1, 2, \ldots, n\}$ there is an \textbf{\emph{angle map}}
\[\alpha_{G, r}\co \Conf_{n, r}(D^2) \rightarrow (S^1)^{\abs{E(G)}}\]
that records, for each edge $i \rightarrow j$ in $G$, the direction from disk $i$ toward disk $j$ in each configuration in $\Conf_{n, r}(D^2)$.  As with segments, we say that a directed graph $G$ can \textbf{\emph{spin independently}} with radius $r$ if there is a continuous map
\[f \co (S^1)^{\abs{E(G)}} \rightarrow \Conf_{n, r}(D^2)\]
such that $\alpha_{G, r} \circ f$ is the identity on $(S^1)^{\abs{E(G)}}$.  Proposition~\ref{spin-independently-prop} explains how spinning independently is related to the cohomology of $\Conf_{n, r}(D^2)$.

The question about disks that corresponds to the question about segments is, what is the maximum radius $r$ at which a given directed graph $G$ can spin independently?  Can we estimate $r$ up to a constant factor in terms of the sizes of connected components of $G$?  This question is explored in Conjecture~\ref{modified} and Conjecture~\ref{strong}.

In Section~\ref{background} we state precise definitions and set up the general framework used in the rest of the paper.  Section~\ref{conj} describes the most ambitious conjectures about disk configuration spaces.  Section~\ref{seg} gives the analogous conjectures about segment configuration spaces and proves a partial result; this section can be read independent of Sections~\ref{background} and~\ref{conj}.  Section~\ref{segments-independently} explores a specific map that spins $n$ segments independently.  Sections~\ref{four} and~\ref{small} return to more results on disks: Section~\ref{four} computes topological invariants for the configurations of four disks, and Section~\ref{small} gives some strategies for extending this computation to the case where the number of disks is much larger than the radius of each disk.


\section{Background information about disks}\label{background}

We begin this section by reviewing the setup introduced by Baryshnikov, Bubenik, and Kahle in~\cite{Baryshnikov13}.  Then we examine the least value of $r$ at which the topology of $\Conf_{n, r}(D^2)$ changes.  In doing so we introduce the idea of considering cohomology classes on configuration spaces of points and restricting these classes to the disk configuration spaces, which is the main strategy used throughout the paper.

For any bounded region $U \subseteq \mathbb{R}^d$ we let $\Conf_n(U)$ denote the set of ordered $n$--tuples of distinct points in $U$.  For each such $n$--tuple $\vec{x} = (x_1, \ldots, x_n)$, there is a supremal radius $r$ such that the balls of radius $r$ centered at $x_1, \ldots, x_n$ are disjoint and contained in $U$; we denote this $r$ by the function $\tau(\vec{x})$, called the \emph{\textbf{tautological function}}.  The space $\Conf_{n, r}(U)$ is defined to be $\tau^{-1}[r, \infty)$, which is the subspace of $\Conf_n(U)$ containing those configurations that can be the centers of disjoint balls of radius $r$ in $U$.  The goal is to study how the topology of $\Conf_{n, r}(U)$ changes with $r$.

The paper~\cite{Baryshnikov13} shows that the tautological function $\tau$ is like a Morse function: between critical values (under some suitable definition), the topology of the superlevel sets $\tau^{-1}[r, \infty)$ does not change.  The critical values are defined according to the following description of the critical points.  For $\vec{x} \in \Conf_{n, r}(U)$, the \emph{\textbf{stress graph}} has the following vertex set: every center $x_1, \ldots, x_n$, plus every point $y$ of the boundary $\del U$ at distance exactly $r$ from one of the centers.  The points $x_1, \ldots, x_n$ are called \emph{\textbf{internal points}}, and the vertices $y$ are called \emph{\textbf{boundary points}}.  The edges of the stress graph are some nonempty collection of the pairs $\{x_i, x_j\}$ at distance exactly $2r$ and the pairs $\{x_i, y\}$ at distance exactly $r$, drawn as segments in $\mathbb{R}^d$.  Every edge is assigned a positive weight, and we interpret the weighted graph as a system of mechanical stresses, in which the weight of an edge is the amount of force pushing both endpoints outward.  A stress graph is \emph{\textbf{balanced}} if the weights satisfy the following conditions:
\begin{itemize}
\item The mechanical stresses at each internal point sum to zero; and
\item On each connected component, the mechanical stresses on the boundary points sum to zero.
\end{itemize}
A \emph{\textbf{balanced configuration}} is any configuration admitting a balanced stress graph.

\begin{theorem}[\cite{Baryshnikov13}]\label{bbk}
Suppose that $r_1 < r_2$ are numbers such that if $r \in [r_1, r_2]$ then there are no balanced configurations in $\Conf_{n, r}(U)$.  Then $\Conf_{n, r_2}(U)$ is a deformation retract of $\Conf_{n, r_1}(U)$.
\end{theorem}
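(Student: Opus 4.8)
Since this theorem is quoted from \cite{Baryshnikov13}, the plan is to recall the Morse-theoretic argument behind it. The tautological function $\tau$ is only Lipschitz, but it is a \emph{min-type} function: near any configuration it equals $\min_\alpha h_\alpha$, where $\alpha$ ranges over the finitely many half-distances $\vec x\mapsto\tfrac12\abs{x_i-x_j}$ and boundary distances $\vec x\mapsto\operatorname{dist}(x_i,\del U)$, each of which is smooth on the region $\tau\ge r_1>0$ that concerns us (if $\del U$ is only piecewise smooth, one first replaces $\operatorname{dist}(x_i,\del U)$ near a given configuration by the smooth functions $\abs{x_i-y}$ over the finitely many nearest boundary points $y$). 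Call $\vec x\in\tau^{-1}(r)$ a \emph{critical point} if no tangent vector $\vec v$ satisfies $\langle\nabla h_\alpha(\vec x),\vec v\rangle>0$ for every \emph{active} index $\alpha$, i.e.\ every $\alpha$ with $h_\alpha(\vec x)=r$; at a noncritical point such a $\vec v$ strictly increases $\tau$, since the one-sided directional derivative of a finite minimum of smooth functions equals $\min_{\alpha\text{ active}}\langle\nabla h_\alpha(\vec x),\vec v\rangle>0$.

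The first key step is to identify the critical points with the balanced configurations. By Gordan's theorem of the alternative, $\vec x$ is critical if and only if $0$ lies in the convex hull of the active gradients $\nabla h_\alpha(\vec x)$, that is, there exist weights $w_\alpha\ge 0$, not all zero, with $\sum_\alpha w_\alpha\nabla h_\alpha(\vec x)=0$. Now $\nabla\bigl(\tfrac12\abs{x_i-x_j}\bigr)$ is supported on the $x_i$- and $x_j$-coordinates and pushes those two centers directly apart, while $\nabla\operatorname{dist}(x_i,\del U)$ is supported on the $x_i$-coordinate and pushes $x_i$ directly away from its nearest boundary point; reading each positive $w_\alpha$ as the weight of the corresponding edge of a stress graph, the $x_i$-component of $\sum_\alpha w_\alpha\nabla h_\alpha(\vec x)=0$ says exactly that the stresses cancel at the internal point $x_i$, and summing each edge's opposite contributions over a connected component yields the condition at the boundary points. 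Moreover a balanced stress graph in $\Conf_{n,r}(U)$ forces $\tau(\vec x)=r$, since at radius $r$ it uses only pairs at distance exactly $2r$ and boundary points at distance exactly $r$. Hence ``there are no balanced configurations in $\Conf_{n,r}(U)$ for any $r\in[r_1,r_2]$'' is the same as ``$\tau$ has no critical point in $\tau^{-1}[r_1,r_2]$''.

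Next I would assemble a gradient-like vector field. At each noncritical $\vec x$, I would pick a single vector $\vec v_{\vec x}$ that increases not just the active $h_\alpha$ but every $h_\alpha$ with $h_\alpha(\vec x)<\tau(\vec x)+\delta(\vec x)$ for some $\delta(\vec x)>0$; such a $\vec v_{\vec x}$ exists by a compactness argument, since otherwise, letting $\delta\to 0$ and normalizing the would-be zero-combinations, a subsequential limit would be a convex combination of active gradients equal to zero, contradicting noncriticality. On a small enough neighborhood of $\vec x$ the constant vector $\vec v_{\vec x}$ then makes $\tau$ increase, and because ``increase all relevant $h_\alpha$'' is a system of strict linear inequalities it is preserved under convex combinations, so a partition of unity patches the $\vec v_{\vec x}$ into a continuous vector field $V$ on a neighborhood of $\tau^{-1}[r_1,r_2]$ along which $\tau$ increases. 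The set $\tau^{-1}[r_1,r_2]$ is compact, because $\tau\ge r_1>0$ keeps the centers a fixed distance apart and a fixed distance from $\del U$, so on it the rate of increase of $\tau$ along $V$ is bounded below by some $c>0$.

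Finally I would run the flow $\Phi_t$ of $V$, stopped on reaching the level $\tau=r_2$. The uniform rate $c$ guarantees that each $\vec x$ with $\tau(\vec x)\le r_2$ reaches $\tau^{-1}(r_2)$ at a well-defined time $T(\vec x)\le(r_2-r_1)/c$, with $T(\vec x)=0$ once $\tau(\vec x)\ge r_2$, and $T$ is continuous because $\dot\tau\ge c>0$ makes that crossing transverse; then $(\vec x,s)\mapsto\Phi_{s\,T(\vec x)}(\vec x)$ is a deformation retraction of $\Conf_{n,r_1}(U)=\tau^{-1}[r_1,\infty)$ onto $\Conf_{n,r_2}(U)=\tau^{-1}[r_2,\infty)$: it is the identity at $s=0$, it lands in $\tau^{-1}[r_2,\infty)$ at $s=1$, it fixes $\Conf_{n,r_2}(U)$ for all $s$, and every intermediate image stays in $\tau^{-1}[r_1,\infty)$ since $\tau$ only increases. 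I expect the real difficulty to be precisely that $\tau$ is not $C^1$: the notion of critical point, the use of Gordan's theorem, the margin-$\delta(\vec x)$ choice of uphill directions, and the check that the patched field genuinely increases the min-type function $\tau$ all have to be carried out in the nonsmooth setting, with the boundary-distance terms handled carefully enough that each local model $\min_\alpha h_\alpha$ really consists of smooth functions.
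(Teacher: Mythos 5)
Your proposal is essentially the argument behind the cited result: the paper itself quotes Theorem~\ref{bbk} from \cite{Baryshnikov13} without proof, and the proof there is exactly this min-type Morse theory for the tautological function $\tau$ --- identifying critical points with balanced configurations (your Gordan's-theorem translation, with the boundary condition recovered by summing over a component, is the right dictionary) and flowing along a patched gradient-like field to retract $\tau^{-1}[r_1,\infty)$ onto $\tau^{-1}[r_2,\infty)$. So your sketch is correct in approach and matches the source, with the remaining work being precisely the nonsmooth technicalities you flag at the end.
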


This theorem says that in order to study the topology of $\Conf_{n, r}(U)$, it suffices to study what happens as $r$ passes each radius with a balanced configuration.  The paper~\cite{Baryshnikov13} identifies the smallest such radius when $U$ is a rectangular box in $\mathbb{R}^d$ and shows that the homotopy type of $\Conf_{n, r}(U)$ does indeed change as $r$ crosses that critical radius.  In this paper we prove an analogous result for the case where $U$ is the unit ball $B^d$ in $\mathbb{R}^d$.  Theorem~\ref{stress-graph-length} shows that the smallest critical radius is $\frac{1}{n}$, and Theorem~\ref{first-cohom-change} (which we prove for dimension~$2$) shows that some cohomology is lost as $r$ increases past $\frac{1}{n}$.  The proof of Theorem~\ref{first-cohom-change} appears in this section in order to introduce concepts that appear over and over in the rest of the paper.

\begin{theorem}\label{stress-graph-length}
The least $r$ for which $\Conf_{n, r}(B^d)$ has a balanced configuration is $r = \frac{1}{n}$.
\end{theorem}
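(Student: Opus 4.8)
The plan is to prove the two bounds separately.

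\emph{The bound $r\le \tfrac1n$.} I would exhibit a balanced configuration at $r=\tfrac1n$: put the centres in a chain along a diameter, $x_i=\tfrac{n+1-2i}{n}\,e$ for a fixed unit vector $e$, so consecutive balls are tangent and $x_1,x_n$ are tangent to $\partial B^d$ at $e,-e$. Take the stress graph to be the path $e,x_1,\dots,x_n,-e$ with all weights $1$. At each internal $x_i$ the two incident struts push it in the opposite directions $\pm e$ and cancel; the boundary points $e$ and $-e$ carry the forces $+e$ and $-e$, summing to zero. So this configuration is balanced and $\Conf_{n,1/n}(B^d)$ contains one.

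\emph{The bound $r\ge\tfrac1n$.} Suppose $\vec x\in\Conf_{n,r}(B^d)$ is balanced. Since the balance conditions hold componentwise and some component of the stress graph has an edge, it is enough to show that a connected balanced stress graph with $k\le n$ internal points forces $r\ge\tfrac1k$. I would first record three facts. (1) If a boundary point $y$ had two internal neighbours $x_i,x_j$, then $\abs{x_i-x_j}\le 2r$ must be an equality with $y=\tfrac12(x_i+x_j)$, impossible as $\abs{y}=1>1-r\ge\abs{\tfrac12(x_i+x_j)}$; hence each boundary point has a unique internal neighbour, and an internal point with a boundary neighbour has $\abs{x_i}=1-r$, its ball being \emph{radially} tangent to $\partial B^d$ at $x_i/\abs{x_i}$. (2) The boundary points are leaves, so deleting them leaves a connected graph $G'$ on the internal vertices in which every edge has length exactly $2r$. (3) Let $L$ be the weighted Laplacian of $G'$, $X$ the $k\times d$ matrix of internal centres, and $D$ the diagonal matrix whose $i$-th entry is the (positive) radial-force coefficient at $x_i$ if $x_i$ is radially tangent and $0$ otherwise; the internal balance equations are $LX=DX$, and $D\ne 0$. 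For any direction $u$ the vector $\phi=Xu$ of entries $\langle x_i,u\rangle$ satisfies $L\phi=D\phi$; if $\phi\ge 0$ entrywise then $L\phi\ge 0$ entrywise while $\mathbf 1^{\mathsf T}L\phi=0$, so $L\phi=0$, so $\phi$ is constant ($G'$ connected) and vanishes at the radially tangent vertices, so $\phi\equiv 0$. Hence the internal centres lie in no closed halfspace through the origin except its bounding hyperplane: the origin is in the relative interior of their convex hull. Also, an interior vertex of $G'$ is a weighted average of its neighbours, so it is not extreme in any direction; thus the internal centres lie in the convex hull of the radially tangent ones, whose convex hull therefore also contains the origin.

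\emph{The quantitative step.} A spanning tree of $G'$ has $k-1$ edges of length $2r$, total length $2r(k-1)$, and it is a tree joining the radially tangent centres (with the other internal centres as allowed branch points). By (3) these centres lie on the sphere of radius $1-r$ with the origin in their convex hull, and the key geometric input is that a connected set joining finitely many points on a sphere of radius $\rho$ whose convex hull contains the centre has length at least $2\rho$. (For two points this is immediate since they are then antipodal; in the plane it follows from Cauchy's formula, as the length is at least half the perimeter of the convex hull of the points, and a convex polygon inscribed in a circle of radius $\rho$ and containing the centre has perimeter at least $4\rho$.) This gives $2r(k-1)\ge 2(1-r)$, i.e. $rk\ge 1$, so $r\ge\tfrac1k\ge\tfrac1n$, with equality forcing the diametral chain of the first part.

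The delicate point is this Steiner-type estimate once there are more than two boundary points — one must rule out spreading the boundary contacts around the sphere so as to connect them more cheaply than a diameter, and carry the planar inequality over to $B^d$ (for instance by averaging over $2$-planes through the origin, or by reduction to the plane). I expect that, together with the equality-case bookkeeping, to be where most of the work lies; facts (1)–(3), and especially the maximum-principle argument in (3), are short.
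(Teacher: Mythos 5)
Your overall skeleton is the right one (diametral chain for existence at $r=\tfrac1n$; for the lower bound, pass to a connected component, note that boundary contacts are radial so the contact centers lie on the sphere of radius $1-r$ with the origin in their convex hull, and compare with a spanning tree of total length $2r(k-1)$). But the step you yourself flag as "delicate" is a genuine gap, not bookkeeping: you never prove that a connected straight-edge tree joining points on a sphere of radius $\rho$ whose convex hull contains the center has length at least $2\rho$, except in the plane, and even there you lean on two unproved facts (length of a connected set is at least half the perimeter of its convex hull; an inscribed convex polygon containing the center has perimeter at least $4\rho$). Neither of your suggested fixes for $d>2$ is workable as stated: orthogonal projection to a $2$--plane through the origin (or averaging over such planes) sends the contact centers strictly inside the disk of radius $\rho$, so the planar statement no longer applies to the projected configuration, and no reduction to the plane is given. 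Since the theorem is claimed for all $d$, the proof is incomplete without this estimate.

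The paper closes exactly this gap with two short elementary lemmas, avoiding any integral geometry: (i) a tree embedded in $\mathbb{R}^d$ with straight edges of total length $L$ is contained in some ball of radius $\tfrac{L}{2}$ (induction, peeling off a leaf edge and enlarging the ball by half that edge's length); (ii) a set of points on a sphere of radius $\rho$ that is not contained in any open hemisphere (equivalently, whose convex hull contains the center) cannot be contained in any ball of radius less than $\rho$, because a smaller ball meets the sphere in a subset of an open hemisphere. Applying (i) to a spanning tree of the internal part of the stress graph and (ii) to the radially tangent centers gives $r(k-1)\ge 1-r$ at once, in every dimension. As an aside, the paper also gets the "not in an open hemisphere" fact more cheaply than your Laplacian maximum principle: the stresses at the boundary points are outward radial forces, and the balance condition that they sum to zero on each component immediately forbids all boundary points from lying in an open hemisphere; your facts (1)--(3) are correct but more machinery than is needed.
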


The proof of Theorem~\ref{stress-graph-length} appears in Section~\ref{small}.

For the statement of Theorem~\ref{first-cohom-change}, and for the rest of the paper, we focus on dimension~2 and the unit disk $D^2$.  Theorem~\ref{first-cohom-change} is also true for the unit ball $B^d$ in arbitrary dimension $d$ instead of the unit disk $D^2$, with the same proof, but for notational simplicity we discuss only $D^2$ here.  We abbreviate $\Conf_{n, r}(D^2)$ as $\Conf_{n, r}$, and abbreviate $\Conf_n(D^2)$ as $\Conf_n$.  The inclusion
\[i_{n, r} \co \Conf_{n, r} \hookrightarrow \Conf_n\]
induces a map on cohomology
\[i_{n, r}^* \co H^*(\Conf_n) \rightarrow H^*(\Conf_{n, r}).\]
Throughout the paper we examine how $\ker i_{n, r}^*$, a subspace of $H^*(\Conf_n)$, grows with $r$.

\begin{theorem}\label{first-cohom-change}
For any $r > \frac{1}{n}$, the subspace $\ker i_{n, r}^* \subseteq H^*(\Conf_n)$ is nonzero, so the inclusion 
\[i_{n, r} \co \Conf_{n, r} \hookrightarrow \Conf_n\]
is not a homotopy equivalence.
\end{theorem}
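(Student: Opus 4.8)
The plan is to exhibit one nonzero class of $H^*(\Conf_n)$ that dies under $i_{n,r}^*$, using the angle map of the directed path graph $G = (1 \to 2 \to \cdots \to n)$. Let $\omega_{i,i+1} \in H^1(\Conf_n)$ be the pullback of the fundamental class of $S^1$ along $\vec x \mapsto (x_{i+1}-x_i)/\abs{x_{i+1}-x_i}$, and put
\[\omega := \omega_{12}\,\omega_{23}\cdots\omega_{n-1,n} \in H^{n-1}(\Conf_n),\]
which is exactly $\alpha_G^*$ of the fundamental class of $(S^1)^{n-1}$. First I would check that $\omega \neq 0$: in Arnold's presentation of $H^*(\Conf_n(\mathbb{R}^2))$ the monomials $\omega_{i_1 j_1}\cdots\omega_{i_k j_k}$ with $i_\ell < j_\ell$ and $j_1 < j_2 < \cdots < j_k$ form an additive basis, and listing the edges of $G$ in the order $(1,2),(2,3),\dots,(n-1,n)$ exhibits $\omega$ as one of them.

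Next I would record the elementary geometric input. If $n$ disjoint disks of radius $r$ lie in the unit disk with centers $x_1,\dots,x_n$ collinear in the order $1,2,\dots,n$ along some line, then
\[\abs{x_n - x_1} = \sum_{i=1}^{n-1}\abs{x_{i+1}-x_i} \ge 2r(n-1), \qquad \abs{x_n - x_1} \le \abs{x_1} + \abs{x_n} \le 2(1-r),\]
so $r \le \tfrac1n$ (consistent with Theorem~\ref{stress-graph-length}). Hence for every $r > \tfrac1n$ the angle map $\alpha_{G,r}\co \Conf_{n,r} \to (S^1)^{n-1}$ omits the value $q := (\vec e_1,\dots,\vec e_1)$, since any configuration in $\alpha_{G,r}^{-1}(q)$ would be exactly such a forbidden collinear arrangement.

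Finally I would convert the omitted value into vanishing. Because $\alpha_{G,r} = \alpha_G \circ i_{n,r}$, we have $i_{n,r}^*\omega = \alpha_{G,r}^*[(S^1)^{n-1}]$; and since $\alpha_{G,r}$ misses $q$, it factors through the inclusion $(S^1)^{n-1}\setminus\{q\} \hookrightarrow (S^1)^{n-1}$. The space $(S^1)^{n-1}\setminus\{q\}$ deformation retracts onto the $(n-2)$-skeleton of the torus, so its $H^{n-1}$ vanishes, and therefore the fundamental class of the torus already restricts to zero there. Thus $i_{n,r}^*\omega = 0$ while $\omega \neq 0$, so $\ker i_{n,r}^* \neq 0$ and $i_{n,r}$ is not a homotopy equivalence. (Equivalently, in the language of Proposition~\ref{spin-independently-prop}: the path graph cannot spin independently with radius $r > \tfrac1n$.)

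I do not expect a genuine obstacle here; the points to be careful about are the nonvanishing of $\omega$ (citing the ring structure of $H^*(\Conf_n(\mathbb{R}^2))$, equivalently the injectivity of $\alpha_G^*$ on $H^{n-1}$), the standard fact that deleting a point from $(S^1)^{k}$ leaves something homotopy equivalent to its $(k-1)$-skeleton, and keeping the homotopy equivalences among $\Conf_n(D^2)$, $\Conf_n$ of the open disk, and $\Conf_n(\mathbb{R}^2)$ straight so these computations transfer. The one new ingredient — and the template reused throughout the paper — is the implication: a combinatorial picture that radius-$r$ disks are too large to realize $\Rightarrow$ the relevant angle map omits a value $\Rightarrow$ a cohomology class pulled back from a torus restricts to zero on $\Conf_{n,r}$.
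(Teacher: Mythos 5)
Your proposal is correct and follows essentially the same route as the paper: choose a connected ordered forest (the paper allows any, you take the path $1 \to 2 \to \cdots \to n$), use Theorem~\ref{Arnold} for nonvanishing, observe that for $r > \frac{1}{n}$ the restricted angle map misses the diagonal value (your collinearity estimate $2r(n-1) \le \abs{x_n - x_1} \le 2(1-r)$ is exactly the needed geometric input), and conclude that the pullback of the fundamental class of $(S^1)^{n-1}$ vanishes because the map factors through the complement of a point. Your explicit justification via the deformation retraction of $(S^1)^{n-1}\setminus\{q\}$ onto its $(n-2)$--skeleton just spells out the paper's assertion that a non-surjective map kills the fundamental class, so there is no substantive difference.
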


In order to prove Theorem~\ref{first-cohom-change} by exhibiting a nonzero element of $\ker i_{n, r}^*$, we use the computation of $H^*(\Conf_n)$, which is stated here as Theorem~\ref{Arnold}, with proof given by Arnol'd in~\cite{Arnold69}.  For each directed graph $G$ on the vertex set $\{1, 2, \ldots, n\}$ there is a corresponding \emph{\textbf{angle map}}
\[\alpha_G \co \Conf_n \rightarrow (S^1)^{\abs{E(G)}}\]
that, for each $\vec{x} \in \Conf_n$ and each edge $i \rightarrow j$ of $G$, records the unit vector $\frac{x_j - x_i}{\abs{x_j - x_i}}$ in $S^1$.  Every angle map determines a cohomology class in $H^*(\Conf_n)$ obtained by pulling back the fundamental class of $(S^1)^{\abs{E(G)}}$ by $\alpha_G$.  

For notational purposes, in $(S^1)^{\abs{E(G)}}$ we regard each factor $S^1$ as $\mathbb{R}/\mathbb{Z}$, with coordinate $\theta$ and fundamental class $d\theta$, so that the fundamental class of $(S^1)^{\abs{E(G)}}$ is $d\theta_1 \wedge \cdots \wedge d\theta_{\abs{E(G)}}$.  Note that because $\theta$ has period $1$ rather than $2\pi$, in our notation the generator for integer cohomology is $d\theta$ rather than $\frac{1}{2\pi} d\theta$.  We notate the pullback class $\alpha_G^*(d\theta_1 \wedge \cdots \wedge d\theta_{\abs{E(G)}})\in H^*(\Conf_n)$ by $d\theta^{\wedge G}$, for short.

It turns out that $H^*(\Conf_n)$ is generated by these pullback classes and is in fact freely generated by a particular family of them.  We say that $G$ is an \emph{\textbf{ordered forest}} if every edge $i \rightarrow j$ has $i < j$ and every vertex has in-degree at most $1$.

\begin{theorem}[\cite{Arnold69}]\label{Arnold}
As $G$ ranges over all ordered forests on $n$ vertices, the classes $d\theta^{\wedge G}$ form a free basis for $H^*(\Conf_n)$.
\end{theorem}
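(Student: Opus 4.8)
The plan is to prove Theorem~\ref{Arnold} by induction on $n$, peeling off one point at a time. Since $\Conf_n(D^2)$ is homotopy equivalent to $\Conf_n(\mathbb{R}^2)$, I will work with points in the plane. The base case $n = 1$ is immediate: $\Conf_1$ is contractible, the only ordered forest on one vertex is the graph with no edges, and for that graph $d\theta^{\wedge G} = 1$ generates $H^0(\Conf_1)$.

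For the inductive step, consider the map $p \co \Conf_n \to \Conf_{n-1}$ that forgets the last point, $(x_1, \dots, x_n) \mapsto (x_1, \dots, x_{n-1})$. By the theorem of Fadell and Neuwirth this is a fiber bundle, and the fiber over $(x_1, \dots, x_{n-1})$ is $\mathbb{R}^2 \setminus \{x_1, \dots, x_{n-1}\}$, which is homotopy equivalent to a wedge of $n-1$ circles. Its cohomology is free abelian, being $\mathbb{Z}$ in degree $0$ and $\mathbb{Z}^{n-1}$ in degree $1$, with a basis of $H^1$ of the fiber given by the classes dual to small loops around the punctures. The key point to check is that the $n-1$ classes $d\theta^{\wedge(i \to n)} \in H^1(\Conf_n)$, for $1 \le i \le n-1$ (here $d\theta^{\wedge(i \to n)}$ denotes the pullback class for the graph with the single edge $i \to n$), restrict on each fiber to exactly this basis: on the fiber, $d\theta^{\wedge(i \to n)}$ is pulled back from $S^1$ along the map $x_n \mapsto \frac{x_n - x_i}{\abs{x_n - x_i}}$, which winds once around the puncture $x_i$ and zero times around the others.

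Because the fiber has torsion-free cohomology, the Leray--Hirsch theorem applies over $\mathbb{Z}$ and shows that $H^*(\Conf_n)$ is a free module over $H^*(\Conf_{n-1})$, acting through $p^*$, with module basis $1, d\theta^{\wedge(1 \to n)}, \dots, d\theta^{\wedge(n-1 \to n)}$; in particular $p^*$ is injective. Now I would match this module structure with the combinatorics of ordered forests. An ordered forest $G$ on $\{1, \dots, n\}$ either has vertex $n$ of in-degree $0$, in which case $G$ restricted to $\{1, \dots, n-1\}$ is an ordered forest $G'$ and $d\theta^{\wedge G} = p^*(d\theta^{\wedge G'})$; or vertex $n$ has a unique in-edge, necessarily of the form $i \to n$ with $i < n$, in which case deleting that edge leaves an ordered forest $G'$ on $\{1, \dots, n-1\}$ and $d\theta^{\wedge G} = \pm\, p^*(d\theta^{\wedge G'}) \cup d\theta^{\wedge(i \to n)}$ — here I use that $\alpha_G$ is the product of the angle maps of the individual edges, so that $d\theta^{\wedge G}$ is the cup product of the one-edge classes, and that the classes of edges among $\{1, \dots, n-1\}$ are pulled back along $p$. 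This gives a bijection between ordered forests on $n$ vertices and pairs consisting of an ordered forest on $n-1$ vertices together with one of the $n$ module basis elements. By the inductive hypothesis the classes $d\theta^{\wedge G'}$ form a free basis of the free abelian group $H^*(\Conf_{n-1})$, so multiplying them by the Leray--Hirsch module basis yields a free basis of $H^*(\Conf_n)$ indexed precisely by the ordered forests on $n$ vertices, which closes the induction.

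I expect the main obstacle to be the input underlying the application of Leray--Hirsch: confirming that $p$ is a fiber bundle (the Fadell--Neuwirth theorem) and, more delicately, identifying the restriction of each pullback class $d\theta^{\wedge(i \to n)}$ to a fiber with a standard generator of $H^1$ of a punctured plane, which amounts to a concrete winding-number computation for the map $x_n \mapsto \frac{x_n - x_i}{\abs{x_n - x_i}}$. Everything after that — the module computation and the bookkeeping with ordered forests — is formal. (Note that the full ring structure of $H^*(\Conf_n)$, including the Arnol'd relations, is not needed for the basis statement we are after.)
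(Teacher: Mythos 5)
Your proposal is correct, and there is no gap to point out: the Fadell--Neuwirth fibration $p\co \Conf_n \to \Conf_{n-1}$ with fiber a plane minus $n-1$ points, the winding-number check that the one-edge classes $d\theta^{\wedge(i\to n)}$ restrict to a basis of $H^1$ of the fiber, the integral Leray--Hirsch theorem (legitimate here since the fiber has free, finitely generated cohomology), and the bijection between ordered forests on $n$ vertices and pairs (ordered forest on $n-1$ vertices, module basis element) fit together exactly as you say, and the signs coming from reordering edges do not affect the basis statement. Note, however, that the paper itself does not prove Theorem~\ref{Arnold}; it quotes it from Arnol'd~\cite{Arnold69}, so there is no in-paper argument to compare against. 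Your route is the standard one and is essentially the same inductive fibration computation that underlies Arnol'd's original determination of the additive structure (Arnol'd's paper additionally establishes the ring presentation with the three-term relations, which, as you observe, is not needed for the basis statement used here).
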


We use the description of $H*(\Conf_n)$ to find nonzero elements in $\ker i_{n, r}^*$.

\begin{proof}[Proof of Theorem~\ref{first-cohom-change}]
Let $G$ be any ordered forest on $n$ vertices that is connected; that is, it has $n-1$ edges.  Then $d\theta^{\wedge G}$ is a nonzero element of $H^{n-1}(\Conf_n)$, by Theorem~\ref{Arnold}.  It remains to show that $i_{n, r}^* d\theta^{\wedge G} = 0$.  Indeed, the composition 
\[\alpha_G \circ i_{n, r} \co \Conf_{n, r} \rightarrow (S^1)^{n-1}\]
is not surjective for $r > \frac{1}{n}$ because its image does not contain any point for which all $n-1$ components are equal in $S^1$.  The pullback of the fundamental class $d\theta_1 \wedge \cdots \wedge d\theta_{n-1}$ along a non-surjective map $\alpha_G \circ i_{n, r}$ must be zero, and this pullback is equal to $i_{n, r}^*\alpha_G^*(d\theta_1 \wedge \cdots \wedge d\theta_{n-1}) = i_{n, r}^* d\theta^{\wedge G}$.
\end{proof}

The notion of ``spin independently'' given in the introduction is closely related to $\ker i_{n, r}^*$.

\begin{proposition}\label{spin-independently-prop}
If a directed graph $G$ can spin independently with radius $r$, then $d\theta^{\wedge G} \not\in \ker i_{n, r}^*$.
\end{proposition}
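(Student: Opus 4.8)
The plan is to trace through the definitions and observe that a section of the angle map $\alpha_{G,r} \colon \Conf_{n,r} \to (S^1)^{\abs{E(G)}}$ forces the pullback class $d\theta^{\wedge G}$ to survive the restriction map $i_{n,r}^*$. First I would set up the commutative diagram relating the various angle maps and the inclusion. The map $\alpha_G \colon \Conf_n \to (S^1)^{\abs{E(G)}}$ restricts to a map on the subspace $\Conf_{n,r}$, and by definition this restriction $\alpha_G \circ i_{n,r}$ is precisely the map $\alpha_{G,r}$ of the introduction (both record, for each edge $i \to j$, the direction $\frac{x_j - x_i}{\abs{x_j - x_i}}$). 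So we have the commuting triangle with $\alpha_{G,r} = \alpha_G \circ i_{n,r}$.

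Next I would use the hypothesis: $G$ spins independently with radius $r$ means there is a continuous $f \colon (S^1)^{\abs{E(G)}} \to \Conf_{n,r}$ with $\alpha_{G,r} \circ f = \mathrm{id}$. Combining, we get $\alpha_G \circ i_{n,r} \circ f = \mathrm{id}_{(S^1)^{\abs{E(G)}}}$. Passing to cohomology and using functoriality, $f^* \circ i_{n,r}^* \circ \alpha_G^* = \mathrm{id}$ on $H^*((S^1)^{\abs{E(G)}})$. Now apply this to the fundamental class $d\theta_1 \wedge \cdots \wedge d\theta_{\abs{E(G)}}$, which is nonzero in $H^*((S^1)^{\abs{E(G)}})$. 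Its image under $\alpha_G^*$ is by definition $d\theta^{\wedge G} \in H^*(\Conf_n)$; then $i_{n,r}^*$ sends it to $i_{n,r}^* d\theta^{\wedge G} \in H^*(\Conf_{n,r})$; then $f^*$ sends that back to the original nonzero fundamental class. If $i_{n,r}^* d\theta^{\wedge G}$ were zero, then its image under $f^*$ would be zero as well, contradicting the fact that $f^*$ recovers the nonzero fundamental class. Hence $d\theta^{\wedge G} \notin \ker i_{n,r}^*$.

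There is essentially no hard step here — the argument is a formal consequence of functoriality of cohomology applied to a one-sided inverse (a retraction in the homotopy category), together with unwinding the two definitions of "angle map" to see they agree. The only point requiring a sentence of care is confirming that the $\alpha_{G,r}$ appearing in the spin-independently definition in the introduction literally coincides with $\alpha_G \circ i_{n,r}$, so that the fundamental class whose pullback defines $d\theta^{\wedge G}$ is the same one pulled back through the section. One could phrase the whole thing slightly more slickly by noting that a section of $\alpha_{G,r}$ exhibits $(S^1)^{\abs{E(G)}}$ as a retract (up to homotopy) of $\Conf_{n,r}$, so $i_{n,r}^*$ is injective on the image of $\alpha_G^*$; but the direct chase above is cleanest to write.
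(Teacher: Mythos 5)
Your argument is correct and coincides with the paper's own proof: both use the identity $\alpha_{G,r} = \alpha_G \circ i_{n,r}$ and pull back the fundamental class through the section $f$ to conclude $f^* i_{n,r}^* d\theta^{\wedge G} \neq 0$. The paper additionally sketches an equivalent reformulation via pairing with the homology class $f_*[(S^1)^{\abs{E(G)}}]$, but this is only an alternative phrasing of the same functoriality argument you give.
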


\begin{proof}
Applying the definition of spin independently, we have a map
\[f \co (S^1)^{\abs{E(G)}} \rightarrow \Conf_{n, r}\]
such that its composition $\alpha_{G, r} \circ f$ with the angle map is the identity on $(S^1)^{\abs{E(G)}}$.  Because $\alpha_{G, r}$ is equal to $\alpha_G \circ i_{n, r}$, we have
\[d\theta_1 \wedge \cdots \wedge d\theta_{\abs{E(G)}} = (\alpha_{G, r}\circ f)^*(d\theta_1 \wedge \cdots \wedge d\theta_{\abs{E(G)}}) = f^*i_{n, r}^*d\theta^{\wedge G},\]
so $i_{n, r}^*d\theta^{\wedge G}$ is nonzero.

Alternatively, we can think of pairing cohomology class $i_{n, r}^*d\theta^{\wedge G} \in H^*(\Conf_{n, r})$ with a homology class determined by $f$.  Specifically, let $[(S^1)^{\abs{E(G)}}]$ denote the fundamental homology class of $(S^1)^{\abs{E(G)}}$.  Then the pairing of $i_{n, r}^*d\theta^{\wedge G}$ with the homology class $f_*[(S^1)^{\abs{E(G)}}] \in H_*(\Conf_{n, r})$ is equal to the degree of the composition
\[\alpha_G \circ i_{n, r} \circ f \co (S^1)^{\abs{E(G)}} \rightarrow (S^1)^{\abs{E(G)}}.\]
In this case the degree is $1$ because the composition is the identity.  Then, because $i_{n, r}^*d\theta^{\wedge G}$ has a nonzero pairing with a homology class, we conclude that $i_{n, r}^*d\theta^{\wedge G}$ must be nonzero.
\end{proof}

\section{Naive conjecture and revised conjecture about disks}\label{conj}

Larry Guth suggested the following conjecture in analogy with Theorem 7.3 of Gromov's book \cite{Gromov07}; that theorem is about homology of spaces of paths or loops with a given maximum length.  We consider the degree--$j$ part of the cohomology kernel $\ker i_{n, r}^*$ for each $j$.  We define $r_{\min}(n, j)$ to be the infimal $r$ such that $i_{n, r}^*$ is not injective on $H^j(\Conf_n)$, and define $r_{\max}(n, j)$ to be the supremal $r$ such that $i_{n, r}^*$ is not zero on $H^j(\Conf_n)$.  That is, the interval from $r_{\min}(n, j)$ to $r_{\max}(n, j)$ is where $\ker i_{n, r}^*$ is changing in degree $j$.

\begin{conjecture}[Naive conjecture]\label{naive}
There exists a constant $C > 1$ such that for all $n$ and $j$, we have the bound
\[\frac{r_{\max}(n, j)}{r_{\min}(n, j)} \leq C.\]
\end{conjecture}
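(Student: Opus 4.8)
The plan is to reduce Conjecture~\ref{naive} to a comparison of two explicit combinatorial invariants of $(n,j)$, each of which should be pinned down up to a universal constant. Two elementary ingredients feed this. First, a \emph{collinearity bound}: if $G$ is an ordered forest with $j$ edges whose largest connected component has $c$ vertices, then (exactly as in the proof of Theorem~\ref{first-cohom-change}) projecting onto a common edge direction forces those $c$ centers to lie on one line and to be pairwise at distance $\geq 2r$, so they cannot fit in $D^2$ once $r>\frac1c$; hence $d\theta^{\wedge G}\in\ker i_{n,r}^*$ for every $r>\frac1c$. Second, a \emph{packing bound}: $\Conf_{n,r}=\varnothing$, hence $i_{n,r}^*=0$, once $r$ exceeds the maximal packing radius $\rho_{\mathrm{pack}}(n)=\Theta(n^{-1/2})$. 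Let $c(n,j)$ be the smallest possible size of the largest component of an ordered forest with $j$ edges on $n$ vertices, so $c(n,j)=\lceil n/(n-j)\rceil$ (spread the vertices as evenly as possible among the $n-j$ trees). Since every basis class $d\theta^{\wedge G}$ of $H^j(\Conf_n)$ has largest component of size $\geq c(n,j)$, the two bounds give at once
\[
r_{\max}(n,j)\ \le\ \min\!\Big(\tfrac{1}{c(n,j)},\ \rho_{\mathrm{pack}}(n)\Big).
\]
So it remains to show $r_{\min}(n,j)\gtrsim\min\big(\tfrac{1}{c(n,j)},\rho_{\mathrm{pack}}(n)\big)$ with a universal constant.

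The substantive half is therefore the lower bound on $r_{\min}$: I would try to show that for $r$ at most a small universal constant times $\min\big(\tfrac{1}{c(n,j)},\rho_{\mathrm{pack}}(n)\big)$, the map $i_{n,r}^*$ is still injective on $H^j(\Conf_n)$. The mechanism is the pairing argument in the second proof of Proposition~\ref{spin-independently-prop}, run in parallel over all forests at once, exactly as Arnol'd's theorem is proved: produce, for each relevant ordered forest $G$, a map of a $j$-torus into $\Conf_{n,r}$ along which the angle maps $\alpha_{G'}$ have the right degrees, so that the resulting homology classes are ``triangular'' against $\{d\theta^{\wedge G}\}$ and hence detect all of $H^j$. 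Concretely one packs the $n$ disks into a grid of $\Theta(n)$ cells of side $\Theta(n^{-1/2})$, places each component of $G$ of size $\ell$ inside a disjoint block of $\Theta(\ell)$ cells (large enough for those $\ell$ disks to realize arbitrary prescribed edge directions), and spins each block independently; one then checks triangularity of $\langle d\theta^{\wedge G'},[\text{block torus for }G]\rangle$ over the poset of forests. If such families genuinely exist at radius $\asymp\min\big(\tfrac1{c(n,j)},\rho_{\mathrm{pack}}(n)\big)$, they separate all of $H^j$ there, and combined with the upper bound for $r_{\max}$ this yields $r_{\min}(n,j)\asymp r_{\max}(n,j)$, which is the conjecture.

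The main obstacle is precisely whether these torus families persist as far as the upper bound for $r_{\max}$ demands. Here one runs into a sharp tension already visible from two easy computations. On one hand, the monotone path $1\to 2\to\cdots\to(j{+}1)$ is an ordered forest whose (single) component has $j{+}1$ vertices, so the collinearity bound kills its class as soon as $r>\tfrac{1}{j+1}$; thus $r_{\min}(n,j)\leq\tfrac{1}{j+1}$. On the other hand, when $n\geq 2j$ one can take $G$ a perfect matching on $\{1,\dots,2j\}$ and spin it independently at radius $\Theta(n^{-1/2})$ by rotating $j$ disjoint dumbbells in a grid of cells (the other $n-2j$ disks sitting still), so by Proposition~\ref{spin-independently-prop}, $r_{\max}(n,j)\gtrsim n^{-1/2}$. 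For a uniform ratio one therefore needs $\tfrac{1}{j+1}$ and $n^{-1/2}$ to be comparable throughout the range $j+1\le n\le 2j$, which is false on its face. So a proof of Conjecture~\ref{naive} must come down on the side that makes $r_{\min}$ track $r_{\max}$: either the dumbbell-type tori secretly fail to lie in the homology of $\Conf_{n,r}$ once $n^{-1/2}$ outruns $\tfrac{1}{j+1}$, or the upper bound $r_{\max}\le\min(\tfrac1{c(n,j)},\rho_{\mathrm{pack}}(n))$ is not tight and can be pushed down to $\Theta(\tfrac1{j+1})$. Deciding this dichotomy is the heart of the matter, and it is the reason the statement is far more delicate than its elementary-looking ingredients suggest; it is also where the gap lies between Conjecture~\ref{naive} and the revised Conjectures~\ref{modified} and~\ref{strong}.
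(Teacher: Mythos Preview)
The conjecture is \emph{false}, and the paper does not prove it but disproves it. You have in fact found the disproof and then talked yourself out of it. Take $n=2j$. Your path argument shows $r_{\min}(2j,j)\le \tfrac{1}{j+1}$: the class of the path $1\to 2\to\cdots\to(j{+}1)$ (with the remaining $j-1$ vertices isolated) is a nonzero element of $H^j(\Conf_{2j})$, and it dies for $r>\tfrac{1}{j+1}$ by exactly the non-surjectivity argument of Theorem~\ref{first-cohom-change}. Your matching argument shows $r_{\max}(2j,j)\gtrsim j^{-1/2}$: the $j$ independent dumbbells genuinely spin at radius $\Theta(j^{-1/2})$, and Proposition~\ref{spin-independently-prop} then forces $d\theta^{\wedge G}\notin\ker i_{2j,r}^*$ for the matching $G$. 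Hence
\[
\frac{r_{\max}(2j,j)}{r_{\min}(2j,j)}\ \gtrsim\ \frac{j^{-1/2}}{(j+1)^{-1}}\ \sim\ \sqrt{j}\ \longrightarrow\ \infty,
\]
which refutes the conjecture. This is precisely the content of Lemmas~\ref{upper} and~\ref{lower} in the paper.

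Both escape hatches you propose are closed. The dumbbell tori do not ``secretly fail'': packing $j$ disjoint disks of radius $2r$ with $jr^2\sim 1$ is elementary, and the angle map composed with the torus map is literally the identity on $(S^1)^j$, so the degree is $1$ and the homology class is nonzero. And $r_{\max}(2j,j)$ cannot be pushed down to $\Theta(\tfrac{1}{j+1})$, because the dumbbell construction is a \emph{lower} bound on $r_{\max}$, not an upper bound you might hope to sharpen. The ``tension'' you isolated is not an obstacle to be overcome; it \emph{is} the counterexample. The paper's response is to replace Conjecture~\ref{naive} by Conjecture~\ref{modified}, stratifying $H^j$ by the component-size multiset $\underline{m}$ precisely so that a path and a matching with the same number of edges are no longer compared against each other.
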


In this section, we show in Lemmas~\ref{upper} and~\ref{lower} that this naive conjecture is false and replace it by a modified conjecture.  Then we make some first observations about what would be needed to address the modified conjecture.  In what follows, we use the notation $f \lesssim g$ to mean that $f$ is at most a constant times $g$, or $f = O(g)$.  Likewise, $f \gtrsim g$ means that $f$ is at least a positive constant times $g$, or $f = \Omega(g)$, and $f \sim g$ means that the ratio between $f$ and $g$ is bounded between two positive constants, or $f = \Theta(g)$.

\begin{lemma}\label{upper}
We have
\[r_{\max}(2j, j) \gtrsim \frac{1}{\sqrt{j}}.\]
\end{lemma}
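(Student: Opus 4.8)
The plan is to exhibit, for $n = 2j$, an explicit cohomology class in $H^j(\Conf_n)$ that survives restriction to $\Conf_{n, r}$ for $r$ as large as a constant times $1/\sqrt{j}$; by the definition of $r_{\max}(n,j)$ this gives the bound. The natural candidate is the class $d\theta^{\wedge G}$ where $G$ is a disjoint union of $j$ directed edges, say $1 \to 2$, $3 \to 4$, \ldots, $(2j-1) \to 2j$, which is an ordered forest with $j$ edges, hence $d\theta^{\wedge G}$ is nonzero in $H^j(\Conf_n)$ by Theorem~\ref{Arnold}. To show $i_{n,r}^* d\theta^{\wedge G} \neq 0$ it suffices, as in Proposition~\ref{spin-independently-prop}, to show that the graph $G$ can spin independently with radius $r$ — that is, to construct a continuous section $f \co (S^1)^j \to \Conf_{n, r}$ of the angle map $\alpha_{G, r}$. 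Equivalently, we need, for every choice of $j$ angles $\theta_1, \ldots, \theta_j$, to place $j$ ordered pairs of disjoint radius-$r$ disks in $D^2$, continuously in the angles, so that the $k$-th pair points in direction $\theta_k$.

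The construction I would use: partition the unit disk into $j$ congruent sub-regions, each containing one "dumbbell" consisting of two disks of radius $r$ whose center-to-center direction realizes the prescribed angle. The key geometric input is that a pair of touching (or nearly touching) radius-$r$ disks fits inside a disk of radius roughly $2r$ regardless of the orientation, so we can rotate the pair freely inside that containing disk; we then need $j$ disjoint disks of radius $\sim 2r$ packed in $D^2$, and a disk of radius $\rho$ contains $j$ disjoint disks of radius $\sim \rho/\sqrt{j}$ (a grid or hexagonal packing). Taking $\rho = 1$ forces $r \sim 1/\sqrt{j}$, which is exactly the claimed order of magnitude. One must be careful that the dumbbell, as it rotates, stays inside its allotted small disk: placing the rotation center of the pair at the center of the small disk, the pair sweeps out a disk of radius $r + 2r = 3r$ (one disk centered at the rotation center, the other at distance $2r$), so the small disks need radius $3r$, still giving $r \gtrsim 1/\sqrt j$. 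Continuity of $f$ in $(\theta_1,\ldots,\theta_j)$ is immediate from the explicit formulas for the disk centers as functions of the angles.

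With the section $f$ in hand, Proposition~\ref{spin-independently-prop} gives $i_{n,r}^* d\theta^{\wedge G} \neq 0$ for all $r \lesssim 1/\sqrt{j}$ (with the implied constant coming from the packing argument above), hence $r_{\max}(2j, j) \gtrsim 1/\sqrt{j}$.

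The main obstacle is not the topology — Proposition~\ref{spin-independently-prop} reduces everything to a clean geometric section problem — but getting the packing constant honest and checking disjointness \emph{between} the $j$ rotating dumbbells as well as \emph{within} each pair. Since each dumbbell is confined to its own small disk of radius $3r$ and these small disks are chosen pairwise disjoint inside $D^2$, disjointness between different pairs is automatic; disjointness within a pair holds as long as $2r$ (the center separation) is at least $2r$, i.e.\ the two disks are placed tangent or slightly separated, which we can arrange with a fixed positive gap by taking the packing slightly sub-optimal. So the only real content is the elementary estimate that $\lfloor c^2 \rho^2 / r^2 \rfloor \geq j$ disjoint radius-$r$ disks fit in a radius-$\rho$ disk for a suitable absolute constant $c$, which is standard.
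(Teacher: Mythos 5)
Your proposal is correct and is essentially the paper's own argument: take the matching forest $G$, spin each tangent pair of radius-$r$ disks inside its own medium-sized disk (the paper uses disks of radius $2r$ centered at the pair's midpoint, while your placement needs radius $3r$, which changes only the constant), pack $j$ such medium disks in $D^2$ when $jr^2 \sim 1$, and conclude $i_{2j,r}^* d\theta^{\wedge G} \neq 0$ via the section of the angle map as in Proposition~\ref{spin-independently-prop}. No gaps.
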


\begin{proof}
For $jr^2 \sim 1$, we can fit $j$ disjoint medium-sized disks of radius $2r$ inside the unit disk $D^2$.  Then we can spin two small disks of radius $r$ inside each medium-sized disk, as depicted in Figure~\ref{matching}.  That is, we define a map 
\[f \co (S^1)^j \rightarrow \Conf_{2j, r}\]
such that if $\vec{x} = f(\theta_1, \ldots, \theta_j)$, then for each $i = 1, 2, \ldots, j$ the points $x_{2i - 1}$ and $x_{2i}$ are the centers of two tangent disks of radius $r$ inside the $i$th medium-sized disk, and the angle $\frac{x_{2i} - x_{2i - 1}}{\abs{x_{2i} - x_{2i-1}}}$ is equal to $\theta_i$.

Let $G$ be the ordered forest on $2j$ vertices with edges $(2i - 1) \rightarrow 2i$---in other words, a matching.  Then the pairing of $i_{2j, r}^*(d\theta^{\wedge G})$ with the homology class corresponding to $f$ has value $1$, so $i_{2j, r}^*(d\theta^{\wedge G}) \neq 0$ and $r_{\max}(2j, j) \geq r$.
\end{proof}

\begin{figure}
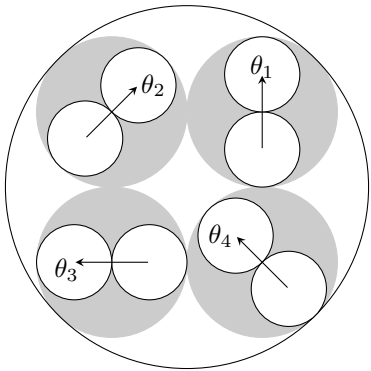

\begin{center}
\Matching
\end{center}
\caption{A nonzero homology class of degree $j$ (here, $4$) is produced by independently turning $j$ pairs of disks inside disjoint medium-sized disks.}\label{matching}
\end{figure}

\begin{lemma}\label{lower}
We have
\[r_{\min}(j+1, j) = \frac{1}{j+1}.\]
\end{lemma}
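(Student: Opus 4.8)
The statement is an equality, so the plan is to prove the two inequalities $r_{\min}(j+1,j)\le \frac{1}{j+1}$ and $r_{\min}(j+1,j)\ge \frac{1}{j+1}$ separately; both fall out of results already in the excerpt. For the upper bound I would rerun the proof of Theorem~\ref{first-cohom-change} with $n=j+1$: choose any connected ordered forest $G$ on $j+1$ vertices, say the path $1\to 2\to\cdots\to(j+1)$, which has $j$ edges, so $d\theta^{\wedge G}$ is a nonzero class of $H^j(\Conf_{j+1})$ by Theorem~\ref{Arnold}. That same proof shows $i_{j+1,r}^*d\theta^{\wedge G}=0$ whenever $r>\frac{1}{j+1}$, since $\alpha_G\circ i_{j+1,r}$ then omits the diagonal point of $(S^1)^j$. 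Hence $i_{j+1,r}^*$ is not injective on $H^j(\Conf_{j+1})$ for every $r>\frac{1}{j+1}$, which forces $r_{\min}(j+1,j)\le\frac{1}{j+1}$.

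For the lower bound I would combine Theorem~\ref{stress-graph-length} with Theorem~\ref{bbk}. Fix $r<\frac{1}{j+1}$. By Theorem~\ref{stress-graph-length}, applied to $D^2=B^2$, there is no balanced configuration in $\Conf_{j+1,s}$ for any $s\in[0,r]$, so Theorem~\ref{bbk} (with $r_1=0$ and $r_2=r$) shows that $\Conf_{j+1,r}$ is a deformation retract of $\Conf_{j+1,0}=\Conf_{j+1}$. Then $i_{j+1,r}$ is a homotopy equivalence, so $i_{j+1,r}^*$ is an isomorphism, and in particular injective on $H^j(\Conf_{j+1})$. Thus the set of $r$ for which injectivity on $H^j$ fails avoids $[0,\frac{1}{j+1})$ and contains $(\frac{1}{j+1},\infty)$, whence $r_{\min}(j+1,j)=\frac{1}{j+1}$.

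I do not anticipate a real obstacle, since each half is a short consequence of an earlier theorem; the only care needed is the use of $r_1=0$ in Theorem~\ref{bbk}, which is justified because $0$ is not a critical radius (part of the content of Theorem~\ref{stress-graph-length}), or, if one prefers, by taking $r_1>0$ small and noting separately that $\Conf_{j+1,r_1}\simeq\Conf_{j+1}$ below the first critical radius. It is also worth flagging why the clean value $\frac{1}{j+1}$ is special to the top degree $j=(j+1)-1$: the class shown to die just past radius $\frac{1}{n}$ in Theorem~\ref{first-cohom-change} lives in degree $n-1$, so for $j<n-1$ there is no reason to expect $r_{\min}(n,j)$ to descend all the way to $\frac{1}{n}$.
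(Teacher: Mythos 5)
Your proof is correct and is essentially the paper's own argument: the paper's proof of Lemma~\ref{lower} simply cites Theorems~\ref{stress-graph-length} and~\ref{first-cohom-change}, which you have unpacked in the same way (non-injectivity above $\frac{1}{j+1}$ via the vanishing of $d\theta^{\wedge G}$ for a connected ordered forest, injectivity below $\frac{1}{j+1}$ via the absence of balanced configurations together with Theorem~\ref{bbk}). Your remark about handling $r_1=0$ by instead taking a small $r_1>0$ is the right way to keep that step clean.
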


\begin{proof}
This is the content of Theorems~\ref{stress-graph-length} and~\ref{first-cohom-change} in the case of dimension $2$.
\end{proof}

Because the ratio between $\frac{1}{j+1}$ and $\frac{1}{\sqrt{j}}$ is unbounded as $j \rightarrow \infty$, this pair of lemmas implies that Conjecture~\ref{naive} is false.  We can express this argument informally by saying that the cohomology class corresponding to a matching requires much less empty space than the cohomology class corresponding to a path with the same number of edges.  The reason they require different amounts of space is that the matching has many small connected components, and the path has one large connected component.  We modify the conjecture so that it only compares cohomology classes corresponding to ordered forests that all have the same list of sizes of connected components, as follows.

For each ordered forest $G$, we can list the number of vertices in each connected component of $G$, omitting the isolated vertices.  The result is a multiset $\underline{m} = \{m_1, \ldots, m_k\}$ of integers all at least $2$.  The number of edges in $G$, which we denote by $j(\underline{m})$, is given by the formula
\[j(\underline{m}) = \sum_{i = 1}^k (m_i - 1).\]
We define $H^{\underline{m}}(\Conf_n)$ to be the subspace of $H^{j(\underline{m})}(\Conf_n)$ spanned by the ordered forests for which the list of sizes of connected components is $\underline{m}$---that is, the ordered forests shaped roughly like $G$.  We define $r_{\min}(n, \underline{m})$ to be the infimal $r$ such that $i_{n, r}^*$ is not injective on $H^{\underline{m}}(\Conf_n)$, and define $r_{\max}(n, \underline{m})$ to be the supremal $r$ such that $i_{n, r}^*$ is not zero on $H^{\underline{m}}(\Conf_n)$.

\begin{conjecture}[Modified conjecture]\label{modified}
There exists a constant $C > 1$ such that for all $n$ and all finite multisets $\underline{m}$ of integers greater than $2$, we have
\[\frac{r_{\max}(n, \underline{m})}{r_{\min}(n, \underline{m})} \leq C.\]
\end{conjecture}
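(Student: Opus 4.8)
Since Conjecture~\ref{modified} is open, what follows describes the approach that seems most promising rather than a complete argument. The plan is to show that $r_{\min}(n,\underline m)$ and $r_{\max}(n,\underline m)$ are both comparable, up to constants, to a single geometric quantity $\rho(n,\underline m)$. Motivated by Lemmas~\ref{upper} and~\ref{lower}, the natural candidate is
\[\rho(n,\underline m)\ \sim\ \min\!\Big(\ \sigma\big(\max_i m_i\big),\ \tfrac{1}{\sqrt n}\ \Big),\]
where $1/\sqrt n$ is the scale at which $n$ disjoint radius-$r$ disks stop fitting in $D^2$, and $\sigma(m)$ is the largest radius at which a connected cluster of $m$ disks can be spun through all of its edge-directions inside a region of bounded size --- the quantity governing the path case of Lemma~\ref{lower} and closely tied to the segment problem of Section~\ref{seg}. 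For $\underline m=\{2,\dots,2\}$ the first term is a constant and the $1/\sqrt n$ term binds, recovering Lemma~\ref{upper}; for $\underline m=\{n\}$ we have $\sigma(n)\sim 1/n$ and the first term binds, recovering Lemma~\ref{lower}. If $\sigma$ cannot be pinned down explicitly, it suffices to prove the two bounds below with the same, possibly inexplicit, function of $\underline m$, which still gives the $O(1)$ ratio of the conjecture.

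For the upper bound $r_{\max}(n,\underline m)\lesssim\rho(n,\underline m)$: if $i_{n,r}^*$ is nonzero on $H^{\underline m}(\Conf_n)$ then $\Conf_{n,r}\neq\emptyset$, which already forces $r\lesssim 1/\sqrt n$; moreover, pairing with a homology class and using the basis of Theorem~\ref{Arnold}, one gets an ordered forest $G$ of shape $\underline m$ and a map $(S^1)^{j(\underline m)}\to\Conf_{n,r}$ of nonzero degree against $\alpha_G$, so its image meets every fiber of $\alpha_G$. Restricting to the vertices of the largest component of $G$, all of that component's edge-directions are realized, so a cluster of $\max_i m_i$ disks of radius $r$ is spun inside a bounded region; an area-and-diameter estimate of the kind in the proof of Lemma~\ref{upper} then gives $r\lesssim\sigma(\max_i m_i)$.

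For the lower bound $r_{\min}(n,\underline m)\gtrsim\rho(n,\underline m)$, which is the heart of the matter: for $r\lesssim\rho(n,\underline m)$ I would build, for every ordered forest $G'$ of shape $\underline m$, a map $f_{G'}\co(S^1)^{j(\underline m)}\to\Conf_{n,r}$ that spins $G'$ independently, by placing the components of $G'$ in pairwise-distant medium-sized disks (room exists since $r\lesssim 1/\sqrt n$) and spinning the small tree $G'|_{C_l}$ independently inside the $l$-th one (room exists since $r\lesssim\sigma(\max_i m_i)$; this generalizes the single-edge construction of Figure~\ref{matching} and the path construction of Section~\ref{segments-independently} to arbitrary small increasing trees). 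As in Proposition~\ref{spin-independently-prop}, the classes $[G']=(f_{G'})_*[(S^1)^{j(\underline m)}]\in H_{j(\underline m)}(\Conf_{n,r})$ pair with $i_{n,r}^*d\theta^{\wedge G}$ by the degree $\deg(\alpha_G\circ i_{n,r}\circ f_{G'})$, so it suffices to show the square matrix of these degrees, indexed by ordered forests of shape $\underline m$, is nonsingular; that gives injectivity of $i_{n,r}^*$ on $H^{\underline m}(\Conf_n)$. Because the medium-sized disks are far apart, any edge of $G$ joining different clusters of $G'$ contributes a coordinate of the map whose image is a proper arc of $S^1$, forcing the degree to vanish unless $G$ and $G'$ induce the same partition of $\{1,\dots,n\}$ into clusters; hence the matrix is block-diagonal over partitions, each block factoring as a tensor product over clusters of the within-cluster matrices $\big(\deg(\alpha_T\circ i_{n,r}\circ g_{T'})\big)_{T,T'}$, where $T,T'$ range over increasing spanning trees of a fixed $m$-element vertex set. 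Since these matrices have $\pm1$ on the diagonal (each $g_{T'}$ spins $T'$ independently), one needs only to choose the single-cluster maps $g_{T'}$ so that each such matrix becomes triangular for a natural ordering of increasing trees.

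The main obstacle is this last triangularity statement, together with the definition of $\sigma$. Nondegeneracy of the within-cluster matrices is exactly the assertion that no shape-$\underline m$ cohomology in degree $j(\underline m)$ is destroyed before radius $\rho(n,\underline m)$, even though, by Theorems~\ref{stress-graph-length} and~\ref{first-cohom-change}, some cohomology of $\Conf_n$ is destroyed already at $r=1/n$; ruling out cancellation across the whole subspace $H^{\underline m}(\Conf_n)$, rather than for single basis classes, is delicate and appears to demand a genuinely explicit and well-chosen family of single-cluster spinning maps. A secondary difficulty is that matching the area/diameter estimate of the upper bound to the single-cluster construction of the lower bound amounts to understanding $\sigma(m)$, which is tied to the open questions about segments (Theorem~\ref{less-than-1} and Question~\ref{limit-zero}); if $\sigma$ stays unknown one should settle for matching but inexplicit bounds, which still suffices for Conjecture~\ref{modified}.
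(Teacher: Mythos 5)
This statement is a conjecture in the paper; the paper itself offers no proof of it, and your text is a program rather than a proof, so the right question is whether the program is sound. It has two concrete gaps. First, the two sides of your sandwich do not meet, because your candidate scale $\rho(n,\underline m)\sim\min(\sigma(\max_i m_i),1/\sqrt n)$ is not what your own lower-bound construction produces. If you place each component $C_l$ (of size $m_l$) in its own medium-sized disk and spin it there, that medium disk must have radius on the order of $r\,m_l$ (at the point of $(S^1)^{j(\underline m)}$ where all edge directions of $C_l$ agree, the $m_l$ disks are collinear, so the cluster has diameter about $2rm_l$ and must be able to rotate); hence the packing condition is $\sum_i (r m_i)^2\lesssim 1$, i.e.\ $r\lesssim 1/\sqrt{\sum_i m_i^2}$, not ``$r\lesssim 1/\sqrt n$ and $r\lesssim\sigma(\max_i m_i)$.'' This is exactly the content of Theorem~\ref{l2-bound} and the disk-packing lemma preceding it. Meanwhile your upper-bound heuristic can give at best $\min\bigl(1/\max_i m_i,\,1/\sqrt{\textstyle\sum_i m_i}\bigr)$, and these two scales differ by an unbounded factor: for $k$ components each of size $\sqrt k$ the lower-bound scale is $k^{-1}$ while the upper-bound scale is $k^{-3/4}$. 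Closing precisely this gap is the open content of Conjecture~\ref{strong}, so as written the program reduces the conjecture to itself rather than resolving it.

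Second, the upper-bound step is not justified: nonvanishing of $i_{n,r}^*$ on $H^{\underline m}(\Conf_n)$ does not produce an ordered forest $G$ of shape $\underline m$ together with a torus map $(S^1)^{j(\underline m)}\to\Conf_{n,r}$ of nonzero degree against $\alpha_G$. A nonzero restricted class is a linear combination of forest classes, and there is no reason it is detected by the image of a torus in $H_*(\Conf_{n,r})$; the only upper bounds currently available come from non-surjectivity of angle maps (the collinearity bound $r_{\max}(\underline m)\le 1/\max_i m_i$) and the total-area bound, as listed after Conjecture~\ref{strong}. On the positive side, the within-cluster nondegeneracy you flag as the ``main obstacle'' for the lower bound is not actually needed in your explicit form: in the regime $r\lesssim 1/\sqrt{\sum_i m_i^2}$ the paper's proof of Theorem~\ref{l2-bound} sidesteps it by scaling $\Conf_{m_i,1/m_i}$ into the $i$th medium disk and invoking Theorem~\ref{stress-graph-length} together with Theorem~\ref{bbk} to get that $i_{m_i,1/m_i}^*$ is an isomorphism, and the triangular pairing computation you want for single clusters is carried out (at radius $1/m$) in Theorem~\ref{dual-space-basis} via Lemma~\ref{tree-order}. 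Also note that the reduction of general $n$ (isolated vertices) to $n=\sum_i m_i$ is Theorem~\ref{extra-n}, so the $1/\sqrt n$ term in your $\rho$ should enter only through that reduction, not through the cluster construction.
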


In the remainder of this section, we make some observations about this modified conjecture.  As a first step, we observe that it suffices to check only those cases in which there are no isolated vertices.

\begin{theorem}\label{extra-n}
For any $\underline{m}$, we let $m = \sum_i m_i$.  If we have
\[\frac{r_{\max}(m, \underline{m})}{r_{\min}(m, \underline{m})} \leq C,\]
then for all $n \geq m$ we have
\[\frac{r_{\max}(n, \underline{m})}{r_{\min}(n, \underline{m})} \leq 2C.\]
\end{theorem}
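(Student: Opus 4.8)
The plan is to sandwich both $r_{\min}(n,\underline{m})$ and $r_{\max}(n,\underline{m})$ between quantities that differ by the factor $2C$, exploiting the fact that an isolated vertex of an ordered forest contributes only a disk that ``takes up room.'' First organize $H^{\underline{m}}(\Conf_n)$. Each ordered forest $G$ on $n$ vertices with component sizes $\underline{m}$ has a set $S(G)$ of exactly $m$ non-isolated vertices, so grouping the basis classes $d\theta^{\wedge G}$ of Theorem~\ref{Arnold} according to $S(G)$ exhibits a direct sum $H^{\underline{m}}(\Conf_n)=\bigoplus_S H^{\underline{m}}_S$ over the $m$-element subsets $S\subseteq\{1,\dots,n\}$, with $H^{\underline{m}}_S=q_S^*\bigl(H^{\underline{m}}(\Conf_m)\bigr)$, where $q_S\co\Conf_n\to\Conf_m$ forgets the vertices outside $S$ (identifying $\Conf_S$ with $\Conf_m$) and $q_S^*$ is injective on $H^{\underline{m}}(\Conf_m)$ because it carries basis classes to basis classes. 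Since $q_S$ restricts to a map $\Conf_{n,r}\to\Conf_{m,r}$ and $d\theta^{\wedge G}=q_S^*\bigl(d\theta^{\wedge G|_S}\bigr)$ with $G|_S$ an ordered forest on $m$ vertices with no isolated vertices and component sizes $\underline{m}$, it follows immediately that if $i_{m,r}^*$ vanishes on $H^{\underline{m}}(\Conf_m)$ then $i_{n,r}^*$ vanishes on every generator of $H^{\underline{m}}(\Conf_n)$, so $r_{\max}(n,\underline{m})\le r_{\max}(m,\underline{m})$. There is also the trivial bound $r_{\max}(n,\underline{m})\le r_{\mathrm{pack}}(n)$, where $r_{\mathrm{pack}}(n):=\sup\{\,r:\Conf_{n,r}\ne\emptyset\,\}$, since for $r>r_{\mathrm{pack}}(n)$ the space $\Conf_{n,r}$ is empty, making $i_{n,r}^*$ the zero map, whereas $H^{\underline{m}}(\Conf_n)\ne0$. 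Write $r^*:=\min\{r_{\max}(m,\underline{m}),\,r_{\mathrm{pack}}(n)\}$, so $r_{\max}(n,\underline{m})\le r^*$.

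The core of the argument is the matching bound $r_{\min}(n,\underline{m})\ge r^*/(2C)$, via a ``shrink and stash'' construction. Fix $\rho<r_{\min}(m,\underline{m})$ and $\epsilon\in(0,\tfrac12]$ chosen so that a disk $B$ of radius $\epsilon$ and $n-m$ further, pairwise disjoint disks of radius $\epsilon\rho$ can be placed disjointly inside $D^2$, all disjoint from $B$. For each $m$-subset $S=\{s_1<\dots<s_m\}$ define $\Phi_S\co\Conf_{m,\rho}\to\Conf_{n,\epsilon\rho}$ by placing the disk labelled $s_i$ at the point obtained from $x_i$ by shrinking the whole configuration by the factor $\epsilon$ into $B$, and placing each disk labelled outside $S$ at one of the $n-m$ fixed ``stashed'' locations. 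Then $q_S\circ i_{n,\epsilon\rho}\circ\Phi_S$ is the map $\Conf_{m,\rho}\to\Conf_m$ that shrinks a configuration by $\epsilon$ into $B$, which is homotopic inside $\Conf_m$ to the inclusion $\Conf_{m,\rho}\hookrightarrow\Conf_m$; hence $\Phi_S^*\,i_{n,\epsilon\rho}^*$ agrees with $i_{m,\rho}^*$ on $H^{\underline{m}}_S$. On the other hand, if $S'\ne S$ then every ordered forest with non-isolated set $S'$ has an edge meeting a vertex of $S'\setminus S$, which $\Phi_S$ sends to a stashed disk, so the angle along that edge stays in a short sub-arc of $S^1$ and its $d\theta$ pulls back to $0$; thus $\Phi_S^*\,i_{n,\epsilon\rho}^*$ annihilates $H^{\underline{m}}_{S'}$. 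So if $c=\sum_S c_S$ lies in $\ker i_{n,\epsilon\rho}^*$, then for each $S$ the class of $H^{\underline{m}}(\Conf_m)$ with $q_S^*$-image $c_S$ is killed by $i_{m,\rho}^*$, hence is zero because $\rho<r_{\min}(m,\underline{m})$, so $c_S=0$ and $c=0$. Therefore $i_{n,\epsilon\rho}^*$ is injective on $H^{\underline{m}}(\Conf_n)$, i.e.\ $r_{\min}(n,\underline{m})\ge\epsilon\rho$.

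Now choose the parameters. Given $r<r^*$, the hypothesis $r_{\max}(m,\underline{m})\le C\,r_{\min}(m,\underline{m})$ forces $r/C<r_{\min}(m,\underline{m})$, so pick $\rho\in\bigl(r/C,\ r_{\min}(m,\underline{m})\bigr)$ and set $\epsilon=r/(2C\rho)$; then $\epsilon<\tfrac12$ and $\epsilon\rho=r/(2C)$. The disk $B$ of radius $\epsilon<\tfrac12$ fits inside the concentric disk $D^2_{1/2}$ of radius $\tfrac12$, while the $n-m$ stashed disks have radius $r/(2C)\le\tfrac12 r_{\mathrm{pack}}(n)$, so by a routine packing count they fit inside the complementary annulus $D^2\setminus D^2_{1/2}$, of area $\tfrac34\pi$ (note $n-m<n\le r_{\mathrm{pack}}(n)^{-2}$, so those disks have total area $<\tfrac14\pi$). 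Hence the ``shrink and stash'' construction is feasible, giving $r_{\min}(n,\underline{m})\ge r/(2C)$; letting $r\nearrow r^*$ and using $r_{\max}(n,\underline{m})\le r^*$ yields $r_{\max}(n,\underline{m})/r_{\min}(n,\underline{m})\le 2C$. The part requiring care is this final packing count when $\tfrac12 r_{\mathrm{pack}}(n)$ is not small compared with the annulus width $\tfrac12$, i.e.\ for small $n$: for such $n$ one checks directly that a single radial layer of stashed disks suffices, or equivalently runs the $\epsilon=\tfrac12$ version of the construction against the bound $r_{\max}(n,\underline{m})\le r_{\max}(m,\underline{m})$ instead. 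This bookkeeping, together with verifying the ``localization'' of $\Phi_S$ in the core step, is the only genuine obstacle; the load-bearing ideas are the decomposition of $H^{\underline{m}}(\Conf_n)$ by non-isolated vertex set and the shrink-and-stash maps $\Phi_S$, with the constant $2$ entering precisely to keep $\epsilon\le\tfrac12$, so that the cluster disk $B$ and the stashed disks occupy the two concentric pieces of $D^2$.
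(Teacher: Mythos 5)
Your argument is essentially the paper's: you decompose $H^{\underline{m}}(\Conf_n)$ by the set $S$ of non-isolated vertices, map a scaled $m$-disk configuration into a small disk $B$ while the other $n-m$ disks sit at fixed stashed positions, check that these maps kill the $S'$-summands for $S' \neq S$ and agree with $i_{m,\rho}^*$ on the $S$-summand, and conclude injectivity of $i_{n,r}^*$ for $r$ up to roughly $r_{\max}/(2C)$; the factor $2$ enters for you through $\epsilon \leq \frac{1}{2}$ exactly as it enters the paper through half-scaling into a radius-$\frac{1}{2}$ disk, and your use of the forgetful maps $q_S$ in place of the paper's inclusions $f_S$, or of the parameter $\epsilon = r/(2C\rho)$ in place of a fixed factor $\frac{1}{2}$, is only cosmetic. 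The one step that does not hold up as written is the feasibility of the stash: comparing the total area of the $n-m$ stashed disks with the area of the annulus $D^2 \setminus D^2_{1/2}$ does not show they can be packed there (an area bound never yields packability), and your fallback of a single radial layer works only when $n$ is small, so this is a genuine gap in the write-up, albeit one you flagged. It is avoidable with no packing estimate at all, and your own parameters permit the paper's fix: since $2\epsilon\rho = r/C < r_{\max}(n,\underline{m})$, the space $\Conf_{n,2\epsilon\rho}$ is nonempty, so one may fix $\vec{y} \in \Conf_{n-m,\,2\epsilon\rho}$ and take the stashed positions to be the half-scaled copy of $\vec{y}$ placed in a radius-$\frac{1}{2}$ disk disjoint from a second radius-$\frac{1}{2}$ disk containing $B$; the stashed disks then have radius $\epsilon\rho$ and are automatically disjoint. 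With that replacement your proof is complete and coincides with the paper's in all essential respects.
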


\begin{proof}
Roughly, the idea is this: if $r$ is such that $r < \frac{1}{2}r_{\min}(m, \underline{m})$ and $r < \frac{1}{2}(\text{maximum radius so }n-m\text{ disks fit in unit disk})$, then if we divide the unit disk in half, in one half we have enough room to move around $m$ disks however we like---enough to get all cohomology classes from $H^{\underline{m}}(\Conf_m)$---and in the other half we have enough room to keep the remaining $n-m$ disks fixed and out of the way.

Let $r < \frac{1}{2C} \cdot r_{\max}(n, \underline{m})$.  We want to show $r < r_{\min}(n, \underline{m})$---that is, that $i_{n, r}^*$ is injective on $H^{\underline{m}}(\Conf_n)$.  Because $n \geq m$, we know $r_{\max}(n, \underline{m}) \leq r_{\max}(m, \underline{m})$, and so $i_{m, 2r}^*$ is injective on $H^{\underline{m}}(\Conf_m)$.  We construct injections
\[f_{S, r} \co \Conf_{m, 2r} \hookrightarrow \Conf_{n, r},\]
one for each subset $S \subset \{1,\ldots, n\}$ of size $m$, as in Figure~\ref{half-inclusion}.  Specifically, we first fix an element $\vec{y} \in \Conf_{n-m, 2r}$; such an element exists because $2r < r_{\max}(n, \underline{m})$ and so in particular $n$ disks of radius $2r$ must fit into the unit disk.   Then for every $\vec{x} \in \Conf_{m, 2r}$, the configuration $f_{S, r}(\vec{x}) \in \Conf_{n, r}$ is obtained by drawing two disjoint medium-sized disks of radius $\frac{1}{2}$ and putting a half-scaled copy of $\vec{x}$ into one and a half-scaled copy of $\vec{y}$ into the other.  In the half-scaled copy of $\vec{x}$, the labels of the disks are the elements of $S$ in order instead of $1, \ldots, m$, and in the half-scaled copy of $\vec{y}$, the labels are the $n-m$ numbers not in $S$.

\begin{figure}
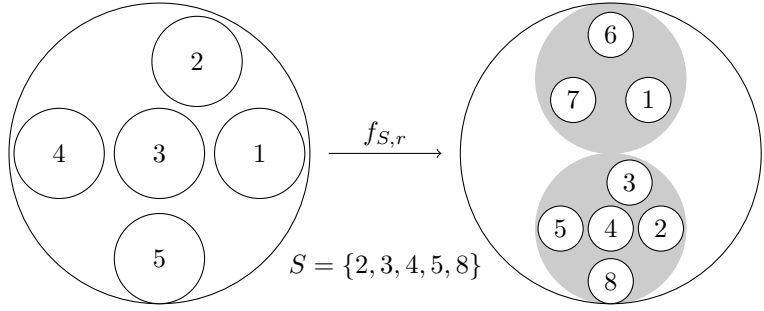

\begin{center}
\HalfInclusion
\end{center}
\caption{We can include $\Conf_{m, 2r}$ into $\Conf_{n, r}$ by putting a half-scaled version in one medium-sized disk and putting the other $n-m$ disks in fixed locations in another medium-sized disk.}\label{half-inclusion}
\end{figure}

We can extend each $f_{S, r}$ to a map
\[f_S  \co \Conf_m \hookrightarrow \Conf_n\]
by the same construction, using the same fixed $\vec{y}$.  The induced map on cohomology
\[f_S^* \co H^*(\Conf_n) \rightarrow H^*(\Conf_m)\]
is the projection to the span of the ordered forests for which all non-isolated vertices are in $S$.  Therefore we have an isomorphism
\[\bigoplus_{S} f_S^* \co H^{\underline{m}}(\Conf_n) \rightarrow \bigoplus_S H^{\underline{m}}(\Conf_m),\]
where the direct sum in the target consists of $\binom{n}{m}$ copies of the same space, one for each $S$.

In order to show that $i_{n, r}^*$ is injective on $H^{\underline{m}}(\Conf_n)$, we compare the two equal compositions
\[\left(\bigoplus_Sf_{S, r}^*\right) \circ i_{n, r}^* \co H^{\underline{m}}(\Conf_n) \rightarrow H^*(\Conf_{n, r}) \rightarrow \bigoplus_S H^*(\Conf_{m, 2r})\]
and
\[i_{m, 2r}^* \circ \left(\bigoplus_S f_S^*\right) \co H^{\underline{m}}(\Conf_n) \rightarrow \bigoplus_S H^{\underline{m}}(\Conf_{m}) \rightarrow \bigoplus_S H^*(\Conf_{m, 2r}).\]
Both maps of the second composition are injections, so both maps of the first composition must be injections.
\end{proof}

This theorem implies that it suffices to study $r_{\min}(m, \underline{m})$ and $r_{\max}(m, \underline{m})$.  Thus we denote $r_{\min}(m, \underline{m})$ and $r_{\max}(m, \underline{m})$ by $r_{\min}(\underline{m})$ and $r_{\max}(\underline{m})$.  The next theorem is a lower bound on $r_{\min}(\underline{m})$ which we prove using a similar argument of scaling configurations to fit into medium-sized disks.

\begin{theorem}\label{l2-bound}
For any $\underline{m}$ we have
\[r_{\min}(\underline{m}) \gtrsim \frac{1}{\sqrt{\sum_i m_i^2}}.\]
\end{theorem}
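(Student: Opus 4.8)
The plan is to carry out a $k$-fold version of the argument that proves Theorem~\ref{extra-n}. By that theorem it suffices to bound $r_{\min}(\underline{m}) = r_{\min}(m, \underline{m})$ where $m = \sum_i m_i$. Suppose we are given radii $\rho_1, \ldots, \rho_k$ such that $k$ disjoint disks of these radii fit inside $D^2$, and fix such disks, with the $i$-th one centered at a point $p_i$. For every partition $\mathcal{P}$ of $\{1, \ldots, m\}$ into blocks $B_1, \ldots, B_k$ with $\abs{B_i} = m_i$ --- there are finitely many --- define
\[f_{\mathcal{P}, r} \co \prod_{i=1}^{k} \Conf_{m_i,\, r / \rho_i} \hookrightarrow \Conf_{m, r}\]
by scaling the $i$-th factor by $\rho_i$, translating the result into the $i$-th disk, and relabelling its $m_i$ points by the elements of $B_i$ in increasing order, exactly as in the proof of Theorem~\ref{extra-n}. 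The same recipe extends this to $f_{\mathcal{P}} \co \prod_i \Conf_{m_i} \hookrightarrow \Conf_m$, and the two constructions are compatible with the radius-forgetting inclusions: $f_{\mathcal{P}} \circ \prod_i i_{m_i, r/\rho_i} = i_{m, r} \circ f_{\mathcal{P}, r}$, so on cohomology (with rational coefficients, say) $\bigl( \bigotimes_i i_{m_i, r/\rho_i}^* \bigr) \circ f_{\mathcal{P}}^* = f_{\mathcal{P}, r}^* \circ i_{m, r}^*$.

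Next come the cohomological observations, each provable by arguments already used in the paper. First, $f_{\mathcal{P}}^*$ kills $d\theta^{\wedge G}$ whenever $G$ has an edge between two different blocks of $\mathcal{P}$: for such an edge the corresponding coordinate of $\alpha_G \circ f_{\mathcal{P}}$ takes values in a proper arc of $S^1$, since a point in one medium disk and a point in another always occupy roughly the same relative position, so that coordinate is non-surjective and the pullback vanishes just as in the proof of Theorem~\ref{first-cohom-change}. Second, restrict to $H^{\underline{m}}(\Conf_m)$: because $m = \sum_i m_i$, every basis forest $G$ of shape $\underline{m}$ has exactly $k$ components, of sizes $m_1, \ldots, m_k$, and no isolated vertices, and one checks that every component of such a $G$ lies inside a single block of $\mathcal{P}$ if and only if the $k$ components are spanning trees of the $k$ blocks. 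Hence $f_{\mathcal{P}}^*$ sends each basis forest of shape $\underline{m}$ either to $0$ or, when its component partition is $\mathcal{P}$, to the Künneth product of the corresponding spanning-tree classes, and those products are linearly independent by Theorem~\ref{Arnold}. Since each shape-$\underline{m}$ forest has a unique component partition, the combined map $\bigoplus_{\mathcal{P}} f_{\mathcal{P}}^*$ is injective on $H^{\underline{m}}(\Conf_m)$, with values in $\bigoplus_{\mathcal{P}} \bigotimes_i H^{\{m_i\}}(\Conf_{m_i})$, where $H^{\{m_i\}}(\Conf_{m_i}) = H^{m_i - 1}(\Conf_{m_i})$. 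Feeding this into the compatibility identity above and arguing as at the end of the proof of Theorem~\ref{extra-n}, $i_{m, r}^*$ is injective on $H^{\underline{m}}(\Conf_m)$ provided each $i_{m_i, r/\rho_i}^*$ is injective on $H^{\{m_i\}}(\Conf_{m_i})$; by Lemma~\ref{lower} this holds as soon as $r / \rho_i < \tfrac{1}{m_i}$, i.e. $\rho_i > r\, m_i$, for every $i$.

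Finally, choose $\rho_i = c\, m_i$. Then the conditions $\rho_i > r\, m_i$ all collapse to $c > r$, while the condition that $k$ disjoint disks of radii $\rho_i$ fit in $D^2$ follows once $\sum_i \rho_i^2 = c^2 \sum_i m_i^2$ is below an absolute constant; so if $r \lesssim 1 / \sqrt{\sum_i m_i^2}$ one can pick $c$ strictly between $r$ and a fixed constant multiple of $1/\sqrt{\sum_i m_i^2}$, making $i_{m, r}^*$ injective on $H^{\underline{m}}(\Conf_m)$ and hence $r \le r_{\min}(\underline{m})$, which is the claimed inequality. The only ingredient not already present in the paper, and the step I expect to be the main (though standard) obstacle, is the packing estimate ``disks of radii $\rho_1, \ldots, \rho_k$ with $\sum_i \rho_i^2$ at most an absolute constant can be packed disjointly in $D^2$'': merely lining the disks up along a diameter uses space far too wastefully, needing $\sum_i \rho_i \le \tfrac12$, and would only give the much weaker bound $r_{\min}(\underline{m}) \gtrsim 1 / \sum_i m_i$. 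Instead I would inscribe the disk of radius $\rho_i$ in an axis-parallel square of side $2\rho_i$ and apply the classical square-packing theorem of Moon and Moser --- any family of squares of total area at most $\tfrac12$ packs into the unit square --- inside the side-$\sqrt 2$ square inscribed in $D^2$, which gives the estimate with constant $\tfrac14$; pinning down the cleanest constant is the only real subtlety.
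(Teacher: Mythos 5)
Your proposal follows essentially the same route as the paper's own proof: partition the labels, scale the small configurations into disjoint medium-sized disks of radii proportional to $m_i$, observe that the induced maps $f_{\underline{S}}^*$ project onto the forests whose components lie in single blocks, and conclude injectivity of $i_{m,r}^*$ on $H^{\underline{m}}(\Conf_m)$ from injectivity of the restriction maps for each small factor. The only cosmetic differences are that the paper proves the packing estimate itself by a short Vitali-type induction (disks of radii $r_1,\ldots,r_k$ fit in a disk of radius $R$ with $R^2 \leq 36\sum_i r_i^2$) instead of citing Moon--Moser, and it gets the small-factor injectivity from Theorem~\ref{stress-graph-length} (the maps $i_{m_i,\frac{1}{m_i}}^*$ are isomorphisms) rather than from Lemma~\ref{lower}.
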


That is, for $\sum_i (rm_i)^2 \sim 1$, the map $i_{m, r}^*$ is injective on $H^{\underline{m}}(\Conf_m)$, where $m = \sum_i m_i$ as before.  The condition $\sum_i (rm_i)^2 \sim 1$ corresponds to the condition that we can fit medium-sized disks of radii $rm_i$ inside the unit disk.  For completeness we prove this fact in the following lemma.

\begin{lemma}
Given $k$ disks of radii $r_1 \geq \cdots \geq r_k$, they can be translated to fit inside a disk of radius $R$ with
\[R^2 \leq 36\sum_{i = 1}^k r_i^2.\]
\end{lemma}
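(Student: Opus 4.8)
The plan is to use a greedy ``shelf'' (or ``strip'') packing. Write $S = \sum_{i=1}^k r_i^2$ and set $W = 4\sqrt{S}$. I will translate all the disks so that they lie, with pairwise disjoint interiors, inside the axis-parallel square $[0,W]\times[0,W]$; since that square is contained in the disk of radius $\frac{W}{\sqrt 2} = 2\sqrt 2\,\sqrt S$ about its centre, this gives $R^2 \le 8S \le 36S$, with a lot of room to spare. (The case $k=0$ is trivial.)

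First I would describe the packing. Reorder the disks so that $r_1 \ge \cdots \ge r_k$; note $r_1 \le \sqrt S$, so every disk has diameter at most $2r_1 \le 2\sqrt S = \frac W2 \le W$. Place the disks one at a time, in this order, into horizontal bands stacked upward from the bottom edge of the square. Within a band the disks have their centres on a common horizontal line and are placed consecutively from the left, each externally tangent to the previous one; a band's height is defined to be twice the radius of its first (hence largest) disk; and when the next disk in the order would stick out past the right edge $x = W$, I close the current band and open a new one immediately above it, starting with that disk. Let the bands be $1,\dots,L$ from the bottom, with heights $h_1,\dots,h_L$, and put $H = \sum_\ell h_\ell$. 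Because each disk assigned to a band has radius at most half that band's height, it stays inside its band's strip; consecutive disks in a band are tangent; and disks in different bands are separated by band boundaries; so the packed disks have disjoint interiors and lie in $[0,W]\times[0,H]$.

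Then come two estimates. The first is that every band except the last is ``more than half full'': if band $\ell$ is not the last, the disk that triggered the new band has radius at most that of every disk already in band $\ell$, hence at most $r_1 \le \frac W4$, so the failure-to-fit inequality forces the total diameter of the disks in band $\ell$ to exceed $W - 2r_1 \ge \frac W2$; in particular the sum of their radii exceeds $\frac W4$. The second is an area comparison: every disk in band $\ell$ precedes the first disk of band $\ell+1$ in the sorted order, so its radius is at least $\rho_{\ell+1} := \frac{h_{\ell+1}}{2}$; hence the total area of the disks in band $\ell$ is at least $\pi\rho_{\ell+1}$ times the sum of their radii, which exceeds $\frac{\pi\rho_{\ell+1}W}{4} = \frac{\pi W h_{\ell+1}}{8}$. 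Summing over $\ell = 1,\dots,L-1$ and comparing with the total area $\pi S$ of all the disks gives $\pi S > \frac{\pi W}{8}(H - h_1)$, so $H < \frac{8S}{W} + 2r_1 \le 2\sqrt S + 2\sqrt S = W$. Thus the packing lies in $[0,W]\times[0,H] \subseteq [0,W]\times[0,W]$, and the lemma follows. (If $L = 1$ there are no non-final bands, but then $H = 2r_1 \le 2\sqrt S < W$ directly.)

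I expect the only real care needed is the bookkeeping that makes the single choice $W = 4\sqrt S$ do two jobs at once: it must be at least $4r_1$ so that the ``more than half full'' estimate applies to every non-final band, and yet small enough that the height bound coming from the area comparison closes to $H < W$. There is no serious obstacle here --- this is the familiar Next-Fit-Decreasing-Height shelf argument --- and since the constant $8$ obtained is comfortably below the asserted $36$, one can even afford a tiny perturbation of the packing to make the disks' closures disjoint rather than merely tangent, should that be wanted.
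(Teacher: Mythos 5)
Your proof is correct, and it takes a genuinely different route from the paper. The paper argues by induction on $k$: pack the first $k-1$ disks in a disk of minimal radius $R_{k-1}$; if the $k$th disk fits in an empty space you are done, and otherwise a Vitali-type argument (tripling the radii of the packed disks covers the disk of radius $R_{k-1}$) gives the density bound $(R_{k-1})^2 \leq 9\sum_{i<k} r_i^2$, after which doubling the radius to $R = 2R_{k-1}$ accommodates the $k$th disk, since $r_k \leq r_1 \leq R_{k-1}$; the factors $9$ and $4$ produce the constant $36$. You instead give an explicit Next-Fit-Decreasing-Height shelf packing into the square of side $W = 4\sqrt{S}$, where $S = \sum_i r_i^2$: the ``more than half full'' estimate for every non-final shelf (sum of radii exceeds $W/4$, using $r_1 \leq \sqrt{S} = W/4$) combines with the monotonicity $r_i \geq h_{\ell+1}/2$ for disks on shelf $\ell$ to give the area bound $\pi S > \frac{\pi W}{8}(H - h_1)$, hence $H < W$, and circumscribing the square yields $R^2 \leq 8S$. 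I checked the bookkeeping (trigger condition, shelf heights, disjointness, the $L=1$ case) and it all closes. The trade-off: the paper's induction is shorter and requires no explicit construction, at the cost of the larger constant $36$; your argument is constructive, elementary in a different way (no covering lemma), and gives the stronger constant $8$, with the slack even allowing a perturbation to make the closures disjoint rather than tangent. Either suffices for the stated bound.
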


\begin{proof}
We use induction on $k$.  For $k = 1$ we can take $R = r_1$.  For $k > 1$, we start with a configuration of the first $k-1$ disks inside a disk of smallest possible radius $R_{k-1}$.  If there is an empty space big enough to fit the $k$th disk, then we are done because, by the inductive hypothesis, we have
\[(R_{k-1})^2 \leq 36\sum_{i = 1}^{k-1}r_i^2 < 36 \sum_{i=1}^k r_i^2.\]
Otherwise, as in the Vitali covering lemma, if we scale the first $k-1$ disks by a factor of $3$, then they cover the disk of radius $R_{k-1}$, so in fact we have
\[(R_{k-1})^2 \leq 9\sum_{i = 1}^{k-1}r_i^2.\]
Then because $r_k \leq r_1 \leq R_{k-1}$, the disk of radius $2R_{k-1}$ is big enough to fit both the disk of radius $R_{k-1}$ and the disk of radius $r_k$ inside it, so we take $R = 2R_{k-1}$ and we have
\[R^2 = (2R_{k-1})^2 \leq 36\sum_{i = 1}^{k-1}r_i^2 < 36\sum_{i = 1}^k r_i^2.\]
\end{proof}

\begin{proof}[Proof of Theorem~\ref{l2-bound}]
Let $r$ be such that we can fit disjoint medium-sized disks of radii $rm_1, \ldots, rm_k$ inside the unit disk; we fix such a configuration of medium-sized disks.  We want to show that $i_{m, r}^*$ is injective on $H^{\underline{m}}(\Conf_m)$.

Let $\underline{S} = \{S_1, \ldots, S_k\}$ be a partition of $\{1, \ldots, m\}$, such that each subset $S_i$ has size $m_i$.  For each such $\underline{S}$, we construct an injection
\[f_{\underline{S}, r} \co \prod_i \Conf_{m_i, \frac{1}{m_i}} \hookrightarrow \Conf_{m, r},\]
as follows.  Let $(\vec{x}_1, \ldots, \vec{x}_k)$ be an arbitrary element of $\prod_i \Conf_{m_i, \frac{1}{m_i}}$.  We construct $f_{\underline{S}, r}(\vec{x}_1, \ldots, \vec{x}_k)$ to be the configuration in $\Conf_{m, r}$ such that for each $i$th medium-sized disk, the configuration inside that medium-sized disk is a scaled copy of $\vec{x}_i$, with disks relabeled to have the values in $S_i$.  The scale factor on the $i$th configuration $\vec{x}_i$ is $rm_i$, so that the resulting disks all have radius $r$.

The remainder of the proof is just like the proof of Theorem~\ref{extra-n}.  The map $f_{\underline{S}, r}$ may be extended to a map
\[f_{\underline{S}} \co \prod_i \Conf_{m_i} \hookrightarrow \Conf_m,\]
by the same formula.  The induced cohomology map
\[f_{\underline{S}}^* \co H^*(\Conf_m) \rightarrow H^*\left(\prod_i \Conf_{m_i}\right)\]
corresponds to the projection of $H^*(\Conf_m)$ onto its subspace generated by the ordered forests in which the connected components are contained in the sets $S_i$.  Thus, there is an isomorphism
\[\bigoplus_{\underline{S}} f_{\underline{S}}^* \co H^{\underline{m}}(\Conf_m) \rightarrow \bigoplus_{\underline{S}} H^{j(\underline{m})}\left(\prod_i \Conf_{m_i}\right),\]
where the direct sum is taken over all partitions $\underline{S} = \{S_1, \ldots, S_k\}$ such that $\abs{S_i} = m_i$ for all $i$, and $j(\underline{m})$ denotes the number of edges, $\sum_i (m_i - 1)$.

In order to show that $i_{m, r}^*$ is injective on $H^{\underline{m}}(\Conf_m)$, we compare the two equal compositions
\[\left( \bigoplus_{\underline{S}} f_{\underline{S}, r}^* \right) \circ i_{m, r}^* \co H^{\underline{m}} (\Conf_m) \rightarrow H^*(\Conf_{m, r}) \rightarrow \bigoplus_{\underline{S}} H^*\left(\prod_i \Conf_{m_i, \frac{1}{m_i}}\right)\]
and
\begin{align*}
\left( \prod_i i_{m_i, \frac{1}{m_i}} \right)^* \circ \left(\bigoplus_{\underline{S}} f_{\underline{S}}^* \right) \co H^{\underline{m}}(\Conf_m) &\rightarrow \bigoplus_{\underline{S}} H^{j(\underline{m})} \left( \prod_i \Conf_{m_i} \right) \rightarrow\\
&\rightarrow \bigoplus_{\underline{S}} H^*\left( \prod_i \Conf_{m_i, \frac{1}{m_i}} \right).
\end{align*}
Both maps of the second composition are injections; for the second map, the restriction maps $i_{m_i, \frac{1}{m_i}}^* \co H^*(\Conf_{m_i}) \rightarrow H^*(\Conf_{m_i, \frac{1}{m_i}})$ are isomorphisms by Theorem~\ref{stress-graph-length}, so the sums of these maps are also injections.  Thus, both maps of the first composition must also be injections.
\end{proof}

\begin{question}
Theorem~\ref{l2-bound} says that
\[r_{\min}(\underline{m}) \gtrsim \frac{1}{\sqrt{\sum_i m_i^2}}.\]
Is the reverse inequality also true?
\end{question}
We might even hope to prove the following conjecture, which would answer both this question and Conjecture~\ref{modified}.

\begin{conjecture}[Strong conjecture]\label{strong}
For any $\underline{m}$ we have
\[r_{\max}(\underline{m}) \lesssim \frac{1}{\sqrt{\sum_i m_i^2}},\]
which implies
\[r_{\min}(\underline{m}) \sim r_{\max}(\underline{m}) \sim \frac{1}{\sqrt{\sum_i m_i^2}}.\]
\end{conjecture}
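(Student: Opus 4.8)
The plan is to establish the upper bound $r_{\max}(\underline m)\lesssim 1/\sqrt{\sum_i m_i^2}$; the displayed chain of equivalences is then immediate, since Theorem~\ref{l2-bound} gives the matching lower bound $r_{\min}(\underline m)\gtrsim 1/\sqrt{\sum_i m_i^2}$ and the kernels $\ker i_{m,r}^*$ only grow with $r$, so $r_{\min}(\underline m)\le r_{\max}(\underline m)$. Fix $m=\sum_i m_i$ and fix $r$ with $\sum_i(rm_i)^2$ at least a large absolute constant; we must show $i_{m,r}^*$ vanishes on $H^{\underline m}(\Conf_m)$. If not, pick $\omega\in H^{\underline m}(\Conf_m)$ with $i_{m,r}^*\omega\neq 0$ and a homology class pairing nontrivially with $i_{m,r}^*\omega$. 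Expanding $\omega$ in the Arnol'd basis of Theorem~\ref{Arnold} restricted to shape $\underline m$ and using $d\theta^{\wedge G}=\alpha_G^*(\mu_G)$ for the top class $\mu_G$ of $(S^1)^{\abs{E(G)}}$, we get an ordered forest $G$ of shape $\underline m$, with tree components $G_1,\dots,G_k$ on vertex sets $S_1,\dots,S_k$ of sizes $m_1,\dots,m_k$, and a class $z\in H_{j(\underline m)}(\Conf_{m,r})$ with $(\alpha_G\circ i_{m,r})_*z$ a nonzero multiple of the fundamental class of $T:=\prod_i (S^1)^{m_i-1}$.

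Write $\phi=\alpha_G\circ i_{m,r}\co\Conf_{m,r}\to T$, so $\phi_*z\neq 0$ in $H_{\mathrm{top}}(T)$. First, $\phi$ is surjective: a map to a closed oriented manifold that omits a point factors through a noncompact manifold, hence kills top homology. Restricting to the diagonal subtorus $(S^1)^k\hookrightarrow T$, surjectivity says that for every $(\theta_1,\dots,\theta_k)\in(S^1)^k$ there is a configuration of $\Conf_{m,r}$ in which every edge of $G_i$ points in direction $\theta_i$; a short induction along the tree $G_i$ then forces the $m_i$ disks of block $S_i$ to be collinear on a line of direction $\theta_i$, filling a ``fattened segment'' of length $\ge m_i r$ and width $2r$. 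The heart of the plan is to upgrade this to: \emph{the $k$ fattened segments can spin independently in $D^2$}, i.e.\ the map recording their $k$ directions admits a \emph{continuous} section over $(S^1)^k$. This does not follow formally from $\phi_*z\neq 0$ (a cycle mapping onto a fundamental class need not contain an embedded torus), and supplying it is where I expect the real work to be: I would try replacing $z$ by a subcycle on which $\phi$ is a fibration-like map over the diagonal subtorus, or run Proposition~\ref{spin-independently-prop} in reverse for this particular pullback class, exploiting that $d\theta^{\wedge G}$ is detected on a coordinate torus and that $\Conf_{m,r}$ is an open manifold.

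Granting the upgrade, the argument closes by passing to the segment side of the theory. The fattened segments have lengths $\ell_i\sim m_i r$ and aspect ratios at least $m_i-1\ge 1$, so their thickness is harmless and the segment-packing estimates apply up to constants. Invoking the segment analogue of Conjecture~\ref{strong} discussed in Section~\ref{seg} --- that $k$ segments of lengths $\ell_i$ can spin independently in the unit disk only if $\sum_i\ell_i^2\lesssim 1$ --- gives $r^2\sum_i m_i^2\lesssim 1$, contradicting the choice of $r$ once the constant is large enough. Hence $i_{m,r}^*$ vanishes on $H^{\underline m}(\Conf_m)$ for all such $r$, which is the claimed bound $r_{\max}(\underline m)\lesssim 1/\sqrt{\sum_i m_i^2}$.

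There are two genuine obstacles, in increasing order of seriousness. First, the upgrade from a homologically nontrivial sweep to an honest continuous section of the segment angle map may simply be false in general, in which case one would have to settle for a ``homological section'' statement and correspondingly strengthen the segment-side input to a homological form. Second, the segment bound used above is itself only conjectural in this paper --- the best unconditional result is Theorem~\ref{less-than-1}, far weaker than $\ell\lesssim 1/\sqrt n$ --- so along this route the strong conjecture for disks is essentially no easier than the corresponding conjecture for segments; this matches the paper's treatment of both as open. A self-contained alternative would bound the homology of the image of the full angle map $\alpha_K\circ i_{m,r}\co\Conf_{m,r}\to(S^1)^{\binom m2}$, where $K$ is the complete graph on $\{1,\dots,m\}$: since every class of $H^*(\Conf_m)$ is pulled back along $\alpha_K$, it would be enough to show this image carries no degree-$j(\underline m)$ cohomology meeting the shape-$\underline m$ classes --- but this is a question about the real-algebraic set of pairwise-direction tuples realized by crowded disk configurations, and does not look more tractable.
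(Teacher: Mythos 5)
The statement you are attempting is Conjecture~\ref{strong}, the ``strong conjecture.''  The paper does not prove it: it is stated explicitly as open, and the conclusion singles it out (together with Conjecture~\ref{strong-segments}) as one of the two most ambitious open problems in the paper.  So there is no proof of the paper to compare against; the only question is whether your proposal closes the gap, and it does not --- for exactly the two reasons you yourself flag.  First, your endgame invokes a segment-side bound (that $k$ segments, or fattened segments, of lengths $\ell_i$ which spin independently in $D^2$ force $\sum_i \ell_i^2 \lesssim 1$) which is itself conjectural, and indeed is a strengthening of Conjecture~\ref{strong-segments}, which is stated only for $n$ segments of a common length; the only unconditional result in that direction is Theorem~\ref{less-than-1}, which gives merely $\lim_n r_{\crit}(n) < 1$.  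So the route is conditional on an open conjecture of comparable difficulty, which is presumably why the paper develops the disk and segment questions in parallel rather than deducing one from the other.  Second, the ``upgrade'' from $\phi_* z \neq 0$ to an honest continuous section of the direction map for the $k$ fattened segments is precisely the converse of Proposition~\ref{spin-independently-prop}, and the paper proves that implication in one direction only; a cycle whose pushforward is a multiple of the fundamental class need not contain anything like a section, and nothing in the paper (or in your sketch) supplies this step.  Even granting it, you would still need the fattened segments coming from different blocks to be pairwise disjoint --- disjointness of the $m$ disks does not give disjointness of the convex hulls of the blocks --- so a further argument or loss of constants is required before any segment-packing input applies.

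What your first paragraph does establish unconditionally (via Theorem~\ref{Arnold} and the surjectivity of $\alpha_G \circ i_{m,r}$ onto the diagonal subtorus, forcing each block of $m_i$ disks to be simultaneously collinear) recovers only the two upper bounds the paper already records immediately after Conjecture~\ref{strong}: $r_{\max}(\underline{m}) \leq \frac{1}{\max_i m_i}$, since some block of size $\max_i m_i$ must be collinear, and $\left(r_{\max}(\underline{m})\right)^2 \leq \frac{1}{\sum_i m_i}$, from area.  The paper explicitly poses, as an open question, whether anything stronger than the minimum of these two bounds can be proved; your proposal does not advance past that point without the two unproven inputs above.  The matching lower bound you cite, $r_{\min}(\underline{m}) \gtrsim 1/\sqrt{\sum_i m_i^2}$, is indeed Theorem~\ref{l2-bound} and is the part of the chain that is actually known.
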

So far, we have only two upper bounds on $r_{\max}(\underline{m})$.  One bound is
\[r_{\max}(\underline{m}) \leq \frac{1}{\max_i m_i},\]
because if for some $m_i$ it is impossible to have $m_i$ collinear disks, then the cohomology class of every ordered forest with a connected component of size $m_i$ becomes zero.  The other bound is
\[\left( r_{\max}(\underline{m})\right)^2 \leq \frac{1}{\sum_i m_i},\]
because if $r^2 \cdot \sum_{i} m_i$ is greater than $1$ then the $m$ disks cannot fit into the unit disk at all.

\begin{question}
Can we prove any upper bound for $r_{\max}(\underline{m})$ stronger than the bound
\[r_{\max}(\underline{m}) \lesssim \min \left(\frac{1}{\max_i m_i}, \frac{1}{\sqrt{\sum_i m_i}}\right)?\]
\end{question}

\section{Question about segments}\label{seg}

In this section we state a similar question that ought to be easier than determining the asymptotic behavior of $r_{\min}(\underline{m})$ and $r_{\max}(\underline{m})$, and give a partial answer in Theorem~\ref{less-than-1}.  Let $\Seg_{n, r}$ denote the space of configurations of $n$ disjoint labeled oriented line segments of length $r$ in the unit disk.  Each configuration can be specified by the center of each segment, along with a unit vector indicating the direction of that segment.  In this way every space $\Seg_{n, r}$ includes into the space $\Conf_n \times (S^1)^n$, which we denote by $\Seg_n$.  Just like on $\Conf_n$, there is a tautological function on $\Seg_n$ that indicates, for each configuration in $\Seg_n$, the supremal length $r$ such that the configuration is in $\Seg_{n, r}$.

There is an angle map
\[\alpha_n \co \Seg_n \rightarrow (S^1)^n\]
given by projection to the second factor.  We denote the inclusion of $\Seg_{n, r}$ into $\Seg_n$ by 
\[i_{n, r} \co \Seg_{n, r} \hookrightarrow \Seg_n.\]
Then we can examine the induced map on cohomology
\[i_{n, r}^*\circ \alpha_n^* \co H^*((S^1)^n) \rightarrow H^*(\Seg_{n, r}),\]
and in particular we can ask whether the fundamental cohomology class of $(S^1)^n$ pulls back to a nonzero class in $H^*(\Seg_{n, r})$.  Abusing notation, for $i = 1, \ldots, n$ we let $d\theta_i$ denote the cohomology class in $H^1(\Seg_n)$ corresponding to the angle of the $i$th segment, so that $d\theta_1 \wedge \cdots \wedge d\theta_n \in H^n(\Seg_n)$ is the pullback by the angle map $\alpha_n$ of the fundamental cohomology class of $(S^1)^n$.  Then we can denote the corresponding class in $H^n(\Seg_{n, r})$ by the pullback $i_{n, r}^*(d\theta_1 \wedge \cdots \wedge d\theta_n)$ or by the restriction $(d\theta_1 \wedge \cdots \wedge d\theta_n)\vert_{\Seg_{n, r}}$.  Let $r_{\crit}(n)$ denote the threshold value of $r$, below which the class $(d\theta_1 \wedge \cdots \wedge d\theta_n)\vert_{\Seg_{n, r}}$ is nonzero and above which it is zero.  As with the disks, if $n$ segments of length $r$ can spin independently in the sense given in the introduction, then $i_{n, r}^*(d\theta_1 \wedge \cdots \wedge d\theta_n) \neq 0$ and so $r \leq r_{\crit}(n)$.  The main question is to determine the asymptotic behavior of $r_{\crit}(n)$.

\begin{proposition}\label{trivial-bound}
For all $n$ we have
\[r_{\crit}(n) \gtrsim \frac{1}{\sqrt{n}}.\]
\end{proposition}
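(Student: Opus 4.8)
The plan is to mimic the disk construction of Lemma~\ref{upper}: when $r$ is small compared with $\frac{1}{\sqrt{n}}$, I would exhibit an explicit continuous section of the angle map by giving each segment its own small disk to spin in. The geometric input is elementary: an oriented segment of length $r$ whose midpoint is pinned at a point $p$ stays inside the closed disk of radius $r$ centered at $p$ for every direction it points (its endpoints are only at distance $r/2$ from $p$). Consequently, if the unit disk contains $n$ disks $B_1, \ldots, B_n$ of radius $r$ with pairwise disjoint interiors, then the assignment sending $(\theta_1, \ldots, \theta_n)$ to the configuration whose $i$th segment has midpoint at the center of $B_i$ and direction $\theta_i$ is a continuous map
\[f \co (S^1)^n \to \Seg_{n, r}\]
with $\alpha_{n, r} \circ f$ equal to the identity on $(S^1)^n$; that is, $n$ segments of length $r$ spin independently. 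The segments are genuinely disjoint because each lies strictly inside its own $B_i$.

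Next I would produce the packing. Applying the disk-packing lemma above --- the one used in the proof of Theorem~\ref{l2-bound} --- to $n$ disks all of radius $r$ yields a configuration of them inside a disk of radius $R$ with $R^2 \leq 36 n r^2$, so $R \leq 1$ as soon as $r \leq \frac{1}{6\sqrt{n}}$. Translating such a configuration into the unit disk gives the required $B_1, \ldots, B_n$, and hence the section $f$, for every $r \leq \frac{1}{6\sqrt{n}}$. Then, exactly as in Proposition~\ref{spin-independently-prop}, the pairing of the restricted class $i_{n, r}^*(d\theta_1 \wedge \cdots \wedge d\theta_n) \in H^n(\Seg_{n, r})$ with the homology class $f_*[(S^1)^n]$ is the degree of the composition $\alpha_n \circ i_{n, r} \circ f$, which is the identity of $(S^1)^n$ and therefore has degree $1$; so the class is nonzero, giving $r_{\crit}(n) \geq \frac{1}{6\sqrt{n}} \gtrsim \frac{1}{\sqrt{n}}$.

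This is the easy direction, and I foresee no real obstacle: the only things needing care are the (evident) continuity of $f$ and the disjointness of the segments noted above, and the bookkeeping of the packing constant. The substantive and still open problem is the reverse --- an upper bound on $r_{\crit}(n)$ tending to $0$ --- which is precisely what Theorem~\ref{less-than-1} and Question~\ref{limit-zero} address.
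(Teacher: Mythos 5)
Your argument is correct and is essentially the paper's own proof: pack $n$ small disks (one per segment) into the unit disk when $r \lesssim \frac{1}{\sqrt{n}}$, spin each segment about its pinned midpoint inside its own disk to get a section of the angle map, and conclude nonvanishing of $i_{n,r}^*(d\theta_1 \wedge \cdots \wedge d\theta_n)$ by pairing with $f_*[(S^1)^n]$ as in Proposition~\ref{spin-independently-prop}. The only differences are cosmetic: the paper uses disks of diameter $r$ (radius $\frac{r}{2}$ suffices, slightly improving your constant) and simply asserts the packing, whereas you invoke the packing lemma from Theorem~\ref{l2-bound} to make the constant explicit.
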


\begin{proof}
For $r^2n \sim 1$ it is possible to fit $n$ medium-sized disks of diameter $r$ into the unit disk.  For such $r$ there is a map 
\[f \co (S^1)^n \rightarrow \Seg_{n, r}\]
that spins each segment in its own medium-sized disk, such that the composition $\alpha_n \circ i_{n, r} \circ f$ is the identity map on $(S^1)^n$.  Then the homology class corresponding to $f$ has a nonzero pairing with the cohomology class $i_{n, r}^*(d\theta_1 \wedge \cdots \wedge d\theta_n)$, so this cohomology class must be nonzero.  Thus $r \leq r_{\crit}(n)$.
\end{proof}

The following conjecture corresponds to the strong conjecture about disks, Conjecture~\ref{strong}.

\begin{conjecture}\label{strong-segments}
For all $n$ we have
\[r_{\crit}(n) \lesssim \frac{1}{\sqrt{n}},\]
which implies
\[r_{\crit}(n) \sim \frac{1}{\sqrt{n}}.\]
\end{conjecture}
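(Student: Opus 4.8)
Because $\alpha_n^*$ is a ring homomorphism, the restricted class $(d\theta_1 \wedge \cdots \wedge d\theta_n)\vert_{\Seg_{n, r}}$ is the cup product $\gamma_1 \smile \cdots \smile \gamma_n$ of the classes $\gamma_k = i_{n, r}^*\alpha_n^*(d\theta_k) \in H^1(\Seg_{n, r})$, and for any integral $n$--cycle $z$ in $\Seg_{n, r}$ its pairing with this class equals the degree of $\alpha_n \circ i_{n, r}$ on $z$. So, modulo a torsion subtlety one expects to be absent, the goal is to show there is a constant $C$ such that whenever $r \geq C/\sqrt{n}$, every integral $n$--cycle in $\Seg_{n, r}$ maps into $(S^1)^n$ with degree $0$; equivalently, $(\alpha_n \circ i_{n, r})_* = 0$ on $H_n(\Seg_{n, r})$. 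Together with Proposition~\ref{trivial-bound} this pins down $r_{\crit}(n) \sim 1/\sqrt{n}$, and since $r_{\crit}(n)$ is presumably nonincreasing in $n$, it suffices to treat $r = C/\sqrt{n}$ for a single large $C$.

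It is worth recording first what does \emph{not} work: the naive heuristic that each of the $n$ segments, in realizing all of its directions, must sweep a region of area $\gtrsim r^2$, and that these $n$ regions are disjoint, so that $nr^2 \lesssim \operatorname{area}(D^2)$. The conclusion $r \lesssim 1/\sqrt{n}$ is the one we want, but the argument is wrong on two counts. By the Besicovitch--Kakeya phenomenon a segment can be turned through all directions inside a set of \emph{arbitrarily small} area, so the relevant footprint of a turning segment must be measured by its diameter, which is $\gtrsim r$, not by its area; and in a family in which the segments move in a coordinated way, the regions traced out by different segments need not be pairwise disjoint. What survives is only the soft fact that a minimal turning set for a length--$r$ segment sits inside a disk of radius $O(r)$, so that $n$ \emph{independently} turning segments confined to fixed pairwise--disjoint such sets force $nr^2 \lesssim 1$ --- which merely reproduces the lower bound of Proposition~\ref{trivial-bound}. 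The content of Conjecture~\ref{strong-segments} is that one cannot beat this trivial disk packing even by letting the segments share space and move in a coordinated, Kakeya--type fashion.

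Accordingly, the plan I would pursue is to make the footprint/packing bound \emph{dynamic}. Given an $n$--cycle $g \co M \rightarrow \Seg_{n, r}$ of nonzero degree, localize near a point $p_0 \in M$ where the Jacobian of $\alpha_n \circ g$ is nonzero, so that near $p_0$ all $n$ angular coordinates vary independently; then thicken each segment to a $\delta$--tube and try to derive a contradiction from a two--dimensional Kakeya maximal function estimate (the sharp $L^2$ bound of C\'ordoba), to the effect that $n$ families of tubes in $D^2$ with independently varying directions cannot all be boundedly overlapping unless $n$ is bounded in terms of $\delta$, after relating $\delta$ to $r$. The hard part --- and the reason this is a conjecture rather than a theorem --- is the bridge between the topology and the geometry: nonzero degree is a global, homological statement, and it does not obviously force \emph{any single configuration} $g(p)$ to exhibit many segments with large rotational freedom simultaneously, since the degree could in principle be spread thinly over $M$. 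Converting the cup--product nonvanishing into a local, quantitative "many independent tubes here'' statement, perhaps by an averaging argument over $M$ weighted by the Jacobian of $\alpha_n \circ g$, is the missing step. Failing a sharp result, a more modest but still interesting target is Question~\ref{limit-zero} --- proving $r_{\crit}(n) \rightarrow 0$ with no control on the rate --- which may be within reach of a compactness argument, in the spirit of Theorem~\ref{less-than-1}.
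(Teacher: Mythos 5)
The statement you are addressing is a conjecture, not a theorem: the paper offers no proof of $r_{\crit}(n) \lesssim 1/\sqrt{n}$, only the matching lower bound (Proposition~\ref{trivial-bound}), a construction whose lengths $\ell_n \sim 1/\sqrt{n}$ are consistent with it (Section~\ref{segments-independently}), and the much weaker upper-bound-type statement $\lim_n r_{\crit}(n) < 1$ (Theorem~\ref{less-than-1}). Your proposal, as you yourself say, also does not close the conjecture, and the gap you name is the genuine one: nonvanishing of $(d\theta_1 \wedge \cdots \wedge d\theta_n)\vert_{\Seg_{n,r}}$ is a global homological statement, and nothing forces any single configuration to exhibit many segments turning independently at once, so there is no single ``time slice'' to which a Kakeya maximal inequality can be applied. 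That bridge is not a technical detail to be filled in later; it is the entire content of the conjecture, and your suggested averaging over $M$ weighted by the Jacobian of $\alpha_n \circ g$ faces the same obstruction (the degree can indeed be spread thinly, and $g$ need not be smooth or even carried by a manifold cycle). Two smaller points: reducing to ``degree zero on every $n$--cycle'' proves only that the class pairs trivially with $H_n$, which by universal coefficients does not rule out a nonzero (torsion-detected) class, so even a successful tube argument would need supplementing; and your monotonicity remark is fine, since $d\theta_1\wedge\cdots\wedge d\theta_n$ on $\Seg_{n+1,r}$ is pulled back along the forgetful map to $\Seg_{n,r}$, so vanishing there kills the $(n+1)$--fold product by cupping with $d\theta_{n+1}$.

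It is worth contrasting your plan with the mechanism the paper actually gets mileage from. Theorem~\ref{less-than-1} is proved not by packing or surjectivity (the paper notes $\alpha_n \circ i_{n,r}$ is surjective for all $r<1$) but by the Lusternik--Schnirelmann theorem (Theorem~\ref{lusternik-schnirelmann}): one finds, for each factor $d\theta_k$, a closed set off which that single $1$--dimensional class dies (Lemma~\ref{midpoint-in-box}, after deleting a finite set of obstacle points via Lemma~\ref{removing-finite-sets}), and then the cup product dies as soon as the supporting sets can be made disjoint. This is precisely a tool designed for the difficulty you identify: it never requires one configuration to carry all the rotational freedom simultaneously, only that the supports of the individual classes $d\theta_k$ be forced into disjoint regions. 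If you want a tractable next step, pursuing Question~\ref{limit-zero} by producing many such supports (for instance, obstacle patterns trapping segments near many directions/locations at once) is more in the spirit of what is known to work than the Kakeya route, whose sharp $L^2$ estimates have no obvious homological interface here.
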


The next theorem shows that $r_{\crit}(n)$ decreases below $1$; its proof strategy may be useful for proving stronger upper bounds on $r_{\crit}(n)$.  The rest of this section contains the proof of the theorem.

\begin{theorem}\label{less-than-1}
We have
\[\lim_{n \rightarrow \infty} r_{\crit}(n) < 1.\]
\end{theorem}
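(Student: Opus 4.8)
The plan is to reduce the statement to a purely geometric packing fact about segments with prescribed directions, using a standard homological argument for the topological half.

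First I would record a monotonicity observation. Forgetting the last segment defines a continuous map $\Seg_{n,r}\to\Seg_{n-1,r}$ compatible with the angle maps, so it pulls $(d\theta_1\wedge\cdots\wedge d\theta_{n-1})\vert_{\Seg_{n-1,r}}$ back to $(d\theta_1\wedge\cdots\wedge d\theta_{n-1})\vert_{\Seg_{n,r}}$; hence if the former vanishes so does the latter, and cupping with $d\theta_n\vert_{\Seg_{n,r}}$ then kills $(d\theta_1\wedge\cdots\wedge d\theta_n)\vert_{\Seg_{n,r}}$. Thus $r_{\crit}(n)$ is non-increasing in $n$, so $\lim_n r_{\crit}(n)=\inf_n r_{\crit}(n)$ exists, and it is enough to produce a single $n_0$ with $r_{\crit}(n_0)<1$.

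Next, the topological reduction: if $(d\theta_1\wedge\cdots\wedge d\theta_{n})\vert_{\Seg_{n,r}}\neq 0$ then the angle map $\alpha_n\circ i_{n,r}\co\Seg_{n,r}\to(S^1)^n$ is surjective. Indeed, this nonzero class in $H^n(\Seg_{n,r};\mathbb{Q})$ pairs nontrivially with some class in $H_n(\Seg_{n,r};\mathbb{Q})$, and since rational homology is generated by images of fundamental classes of closed oriented manifolds, that class may be taken of the form $h_*[M]$ for some closed oriented $n$--manifold $M$ and $h\co M\to\Seg_{n,r}$; then $\alpha_n\circ i_{n,r}\circ h\co M\to(S^1)^n$ has nonzero degree and is therefore surjective, forcing $\alpha_n\circ i_{n,r}$ to be surjective. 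So to get $r_{\crit}(n_0)<1$ it suffices to find $r_0<1$ and an $n_0$--tuple of directions that cannot be realized by $n_0$ pairwise disjoint segments of length $r_0$ in $D^2$; the natural candidate is the equidistributed tuple $\theta_i=\tfrac{i}{n_0}$ ($i=1,\dots,n_0$) in the $\mathbb{R}/\mathbb{Z}$ convention.

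The heart of the matter — and the step I expect to be the main obstacle — is then the geometric claim: there exist $r_0<1$ and $n_0$ so that no $n_0$ disjoint segments of length $r_0$ in $D^2$ can have directions $\tfrac1{n_0},\dots,\tfrac{n_0}{n_0}$. The approach I would try: a segment of length $r$ with $r$ close to $1$ has its supporting line at distance at most $\sqrt{1-r^2/4}<1$ from the center, so the whole configuration is squeezed toward the center and a length-$r$ chord cuts off a boundary arc of definite size. Group the segments by direction into a bounded number of ``bundles'' of nearly parallel segments; within a region that two different bundles both substantially occupy their chords would be forced to cross, so the bundles must lie in essentially disjoint parts of $D^2$, and within a single bundle the extreme directions still differ enough that the bundle's chords cannot all be crammed into an arbitrarily thin strip, so each bundle occupies a region of area bounded below by a positive constant. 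Since the number of bundles needed to account for all directions is bounded below, for $n_0$ large the total area would exceed $\pi$, a contradiction. The work lies in making ``bundle'' precise, proving the per-bundle area lower bound, and tracking constants so that the resulting threshold comes out strictly below $1$; carried out quantitatively, the same scheme should show that realizing $n$ equidistributed directions forces $r\lesssim 1/\sqrt n$, which is exactly Conjecture~\ref{strong-segments}. Combining the geometric claim with the two reductions yields $r_{\crit}(n_0)\le r_0<1$, hence $\lim_n r_{\crit}(n)<1$.
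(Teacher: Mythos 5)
Your reduction sends the whole problem to a false statement. The key geometric claim at the heart of your plan --- that there exist $r_0 < 1$ and $n_0$ such that no $n_0$ disjoint segments of length $r_0$ in $D^2$ can realize the equidistributed directions $\theta_i = \tfrac{i}{n_0}$ --- fails: for any $n$--tuple of \emph{distinct} angles and any $r < 1$, one can place segment $i$ along the ray from the origin in a suitable direction, at distance $\varepsilon$ from the origin, pointing at angle $\theta_i$; segments lying on distinct rays and avoiding the origin are pairwise disjoint, and they fit in $D^2$ once $\varepsilon + r < 1$. (Repeated angles are handled by nearly coincident parallel segments.) So $\alpha_n \circ i_{n,r} \co \Seg_{n,r} \rightarrow (S^1)^n$ is surjective for \emph{every} $r < 1$ and every $n$ --- the paper states exactly this before its proof, as the reason the naive approach cannot work --- and consequently no argument whose conclusion is non-surjectivity of the angle map can prove the theorem. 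Your heuristic for the packing claim also does not cohere on its own terms: nearly parallel chords can be packed into a region of arbitrarily small area, so there is no per-bundle area lower bound proportional to the number of segments in it; and even granting a constant lower bound per bundle, the number of bundles is bounded independently of $n_0$, so letting $n_0 \to \infty$ produces no contradiction with the area of $D^2$. (A further, smaller issue: your pairing step works with rational homology, but nonvanishing of the integral class $i_{n,r}^*(d\theta_1 \wedge \cdots \wedge d\theta_n)$ does not by itself rule out its being torsion; as it happens this is moot, since the surjectivity you would extract from it is true anyway.)

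The paper's actual proof replaces surjectivity by a cup-product vanishing criterion. It uses the Lusternik--Schnirelmann theorem: if $d\theta_1$ vanishes on the complement of a closed set $K_1$ and $d\theta_2$ on the complement of a closed set $K_2$, and $K_1 \cap K_2 = \emptyset$, then $d\theta_1 \wedge d\theta_2$ vanishes on the whole space. Concretely, it removes a carefully chosen finite set $S$ of points from $D^2$: removed points arranged along horizontal strips trap any sufficiently long near-vertical segment in an hourglass-shaped region (Lemma~\ref{horizontal-strip}), which forces $d\theta_1$ to vanish on $\Seg_{1,r}(D^2 \setminus S)$ away from a small closed set $K$ of configurations near the center (Lemma~\ref{midpoint-in-box}); doing this once for a vertical strip and once for a horizontal strip gives disjoint $K_1, K_2$, hence $(d\theta_1 \wedge d\theta_2)\vert_{\Seg_{2,r}(D^2 \setminus S)} = 0$ for some $r < 1$. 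Finally, a separate Lusternik--Schnirelmann argument (Lemma~\ref{removing-finite-sets}) shows that deleting a finite set of points cannot change whether these classes are nonvanishing for all $n$, which converts the statement about $D^2 \setminus S$ into $r_{\crit}(n) \leq r < 1$ for some $n$, and monotonicity (as in your first paragraph, which is fine) gives the limit statement. If you want to salvage your write-up, keep the monotonicity observation and discard the surjectivity reduction entirely; the obstruction has to be cohomological rather than point-set, precisely because the angle map never fails to be onto for $r < 1$.
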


Although Theorem~\ref{less-than-1} sounds like it should be trivial, the most naive proof approach does not work.  The map
\[\alpha_n \circ i_{n, r} \co \Seg_{n, r} \rightarrow (S^1)^n\]
is surjective for all $r < 1$ and all $n$, because for any $n$--tuple of distinct angles we can arrange the segments pointing radially in those directions, and for duplicate angles we can arrange the corresponding segments arbitrarily close together.  Thus, some approach other than surjectivity is needed.  We use the Lusternik-Schnirelmann theorem, which is stated as follows.  A proof can be found on pages~2 and~3 of the book~\cite{Cornea03}.

\begin{theorem}[Lusternik-Schnirelmann theorem]\label{lusternik-schnirelmann}
Let $X$ be a topological space.  Suppose that $\omega_1$ and $\omega_2$ are cohomology classes in $H^*(X)$, and suppose that $K_1$ and $K_2$ are closed subsets of $X$.  If the restrictions $\omega_1\vert_{K_1^C}$ and $\omega_2\vert_{K_2^C}$ to the complements of $K_1$ and $K_2$ are zero, then the restriction $(\omega_1 \smile \omega_2)\vert_{(K_1 \cap K_2)^C}$ of the cup product to the complement of the intersection $K_1 \cap K_2$ is also zero.
\end{theorem}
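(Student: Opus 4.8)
The plan is to deduce the statement from three standard pieces of machinery: the long exact sequence of a pair, the fact that ``restricts to zero on an open set'' is the same as ``lifts to relative cohomology,'' and the relative cup product together with its naturality. The only geometric input is the tautological identity $K_1^C \cup K_2^C = (K_1 \cap K_2)^C$, which lets us match the complement appearing in the conclusion with the union of the two complements appearing in the hypotheses. Since everything is linear, I may assume $\omega_i$ is homogeneous of degree $p_i$ and extend to the general case afterward.

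First I would use the long exact sequence of the pair $(X, K_i^C)$,
\[
\cdots \rightarrow H^n(X, K_i^C) \xrightarrow{\ j_i^*\ } H^n(X) \xrightarrow{\ \rho_i\ } H^n(K_i^C) \rightarrow \cdots,
\]
where $\rho_i$ is restriction. The hypothesis $\omega_i\vert_{K_i^C} = \rho_i(\omega_i) = 0$ forces, by exactness, the existence of a relative class $\bar\omega_i \in H^{p_i}(X, K_i^C)$ with $j_i^*(\bar\omega_i) = \omega_i$.

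Next I would invoke the relative cup product. The pair of open sets $\{K_1^C, K_2^C\}$ is an open cover of its union $K_1^C \cup K_2^C$, so it is an excisive couple in $X$, and there is a bilinear pairing
\[
H^p(X, K_1^C) \otimes H^q(X, K_2^C) \longrightarrow H^{p+q}(X, K_1^C \cup K_2^C), \qquad (a,b) \mapsto a \smile b,
\]
that is natural with respect to the inclusions of pairs; concretely, the square
\[
\begin{array}{ccc}
H^p(X, K_1^C) \otimes H^q(X, K_2^C) & \longrightarrow & H^{p+q}(X, K_1^C \cup K_2^C) \\
\big\downarrow & & \big\downarrow \\
H^p(X) \otimes H^q(X) & \longrightarrow & H^{p+q}(X)
\end{array}
\]
commutes, with vertical arrows induced by $(X,\varnothing)\hookrightarrow(X,K_i^C)$ and $(X,\varnothing)\hookrightarrow(X,K_1^C\cup K_2^C)$. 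Chasing $(\bar\omega_1,\bar\omega_2)$ around this square shows that the image of $\bar\omega_1 \smile \bar\omega_2 \in H^{p+q}(X, K_1^C\cup K_2^C)$ in $H^{p+q}(X)$ is exactly $\omega_1 \smile \omega_2$. Finally I would feed this into the long exact sequence of the pair $(X, (K_1\cap K_2)^C) = (X, K_1^C \cup K_2^C)$: since $\omega_1 \smile \omega_2$ lies in the image of $H^{p+q}(X, K_1^C\cup K_2^C) \to H^{p+q}(X)$, and the composition of that map with restriction to $(K_1\cap K_2)^C$ is zero by exactness, we conclude $(\omega_1 \smile \omega_2)\vert_{(K_1\cap K_2)^C} = 0$.

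The main obstacle — really the only point that is not a formal diagram chase — is setting up the relative cup product and its naturality for the couple $\{K_1^C, K_2^C\}$. On cochains the Alexander--Whitney product does not literally land in $C^*(X, K_1^C\cup K_2^C)$ until one passes to the subcomplex generated by simplices lying entirely in $K_1^C$ or in $K_2^C$; the inclusion of this subcomplex computes $H^*(K_1^C\cup K_2^C)$ precisely because $\{K_1^C, K_2^C\}$ is an open cover of that union (the same excision/subdivision input behind Mayer--Vietoris), and naturality of the pairing then follows from naturality of the Alexander--Whitney diagonal. All of this is standard, and since here $K_1^C$ and $K_2^C$ are automatically open (complements of closed sets), the hypotheses for the relative product hold with nothing to check; one may alternatively just cite the construction of the relative cup product from the reference used for the statement and apply it as a black box.
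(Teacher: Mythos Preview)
Your argument is correct and is the standard proof of this fact. The paper does not actually give its own proof of this theorem; it simply cites pages~2--3 of~\cite{Cornea03}. Your write-up---lift each $\omega_i$ to relative cohomology $H^*(X,K_i^C)$ via the long exact sequence of the pair, take the relative cup product into $H^*(X,K_1^C\cup K_2^C)=H^*(X,(K_1\cap K_2)^C)$ using that $\{K_1^C,K_2^C\}$ is an excisive (open) couple, and then restrict---is exactly the argument one finds in standard references on Lusternik--Schnirelmann category, so there is nothing to compare.
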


The Lusternik-Schnirelmann theorem allows us to prove the following curious lemma, which implies that we can remove an arbitrarily large finite set of points from the unit disk $D^2$ without changing the limit of $r_{\crit}(n)$.  If $U$ is an open subset of $\mathbb{R}^2$, then let $\Seg_{n, r}(U)$ denote the space of configurations of $n$ disjoint, labeled, oriented segments in $U$.

\begin{lemma}\label{removing-finite-sets}
Suppose we have $r > 0$ and $U \subseteq \mathbb{R}^2$ such that for all $n$, we have
\[(d\theta_1 \wedge \cdots \wedge d\theta_n)\vert_{\Seg_{n, r}(U)} \neq 0.\]
Then for any finite set of points $S \subseteq U$, we also have
\[(d\theta_1 \wedge \cdots \wedge d\theta_n)\vert_{\Seg_{n, r}(U \setminus S)} \neq 0.\]
\end{lemma}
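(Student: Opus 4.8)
The plan is to reduce to removing one point and then to apply the Lusternik--Schnirelmann theorem (Theorem~\ref{lusternik-schnirelmann}) inside the space of twice as many segments. First, since a finite $S$ can be deleted one point at a time, and $U \setminus \{p_1, \ldots, p_i\}$ is again an open subset of $\mathbb{R}^2$, it suffices to prove the statement for a single point $p$; if $p \notin U$ there is nothing to prove, so assume $p \in U$, which makes $U \setminus \{p\}$ open. I will also use the elementary fact that ``a given segment of a configuration avoids $p$'' is an open condition, since each segment is compact.

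Assume for contradiction that for some $n$ we have $(d\theta_1 \wedge \cdots \wedge d\theta_n)|_{\Seg_{n,r}(U \setminus \{p\})} = 0$. Work in $X = \Seg_{2n,r}(U)$ and split the $2n$ segments into the first $n$ and the last $n$, writing $d\theta_1 \wedge \cdots \wedge d\theta_{2n} = \omega^{(1)} \smile \omega^{(2)}$ with $\omega^{(1)} = d\theta_1 \wedge \cdots \wedge d\theta_n$ and $\omega^{(2)} = d\theta_{n+1} \wedge \cdots \wedge d\theta_{2n}$. Let $O_1 \subseteq X$ be the open set of configurations in which the first $n$ segments all avoid $p$, and let $O_2 \subseteq X$ be the analogous open set for the last $n$ segments. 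Forgetting the last $n$ segments defines a map $O_1 \to \Seg_{n,r}(U \setminus \{p\})$ under which $\omega^{(1)}|_{O_1}$ is the pullback of the class $(d\theta_1 \wedge \cdots \wedge d\theta_n)|_{\Seg_{n,r}(U\setminus\{p\})} = 0$, so $\omega^{(1)}|_{O_1} = 0$; symmetrically $\omega^{(2)}|_{O_2} = 0$.

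The crucial point is that $O_1 \cup O_2 = X$. Indeed, in any configuration the $2n$ segments are pairwise disjoint, so at most one of them contains the point $p$; if that segment is among the first $n$, the configuration lies in $O_2$, and otherwise it lies in $O_1$. Equivalently, the closed sets $K_1 = X \setminus O_1$ and $K_2 = X \setminus O_2$ are disjoint, so $(K_1 \cap K_2)^C = X$. Applying Theorem~\ref{lusternik-schnirelmann} with $\omega^{(1)}, \omega^{(2)}$ and $K_1, K_2$, the vanishings $\omega^{(1)}|_{K_1^C} = 0$ and $\omega^{(2)}|_{K_2^C} = 0$ force $\omega^{(1)} \smile \omega^{(2)} = (d\theta_1 \wedge \cdots \wedge d\theta_{2n})|_X$ to vanish, contradicting the hypothesis applied with $2n$ in place of $n$. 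Hence $(d\theta_1 \wedge \cdots \wedge d\theta_n)|_{\Seg_{n,r}(U\setminus\{p\})} \ne 0$ for every $n$, and induction on $|S|$ completes the proof.

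The main obstacle is finding this argument: the insight is that doubling the number of segments and cutting them into two halves is precisely what makes the two ``avoids $p$'' open sets cover all of $\Seg_{2n,r}(U)$, because a single point can lie on at most one segment of a configuration, and that Lusternik--Schnirelmann then turns this covering, together with the two vanishings supplied by the assumed bad case, into a vanishing of the top class for $2n$ segments. The remaining ingredients --- openness of the conditions, the fact that the forgetting maps intertwine the angle maps, and the reduction to a single point --- are routine.
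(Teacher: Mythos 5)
Your proof is correct and follows essentially the same route as the paper: reduce to a single point, assume vanishing for some $n$, and apply the Lusternik--Schnirelmann theorem in $\Seg_{2n,r}(U)$ with the two closed sets of configurations where some segment from each half passes through $p$, which are disjoint because the segments are disjoint. Your extra remarks (the forgetting maps justifying $\omega^{(1)}\vert_{O_1}=0$ and the openness of the avoidance condition) only make explicit details the paper leaves implicit.
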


\begin{proof}
It suffices to prove the statement where $S$ is a single point $p \in U$.  Suppose for contradiction that there is some large $n$ such that 
\[(d\theta_1 \wedge \cdots \wedge d\theta_n)\vert_{\Seg_{n, r}(U \setminus \{p\})} = 0.\]
We apply the Lusternik-Schnirelmann theorem (Theorem~\ref{lusternik-schnirelmann}), taking $X$ to be $\Seg_{2n, r}(U)$, taking $\omega_1$ to be $d\theta_1 \wedge \cdots \wedge d\theta_n$, taking $\omega_2$ to be $d\theta_{n+1} \wedge \cdots \wedge d\theta_{2n}$, taking $K_1$ to be the subset of $\Seg_{2n, r}(U)$ where one of the segments $1, \ldots, n$ passes through $p$, and taking $K_2$ to be the subset where one of the segments $n+1,\ldots, 2n$ passes through $p$.  By the contradiction hypothesis we have
\[\omega_1\vert_{K_1^C} = 0,\ \omega_2\vert_{K_2^C} = 0,\]
and because $K_1$ and $K_2$ do not intersect, this implies
\[(d\theta_1 \wedge \cdots \wedge d\theta_{2n})\vert_{\Seg_{2n, r}(U)} = 0,\]
which is a contradiction.
\end{proof}

The next lemma shows that removing points as in the previous lemma can restrict the movement of long segments.

\begin{lemma}\label{horizontal-strip}
Let $U$ be an infinite horizontal strip, and suppose that $r > 0$ is greater than half the height of the strip.  Then for any $\delta > 0$, there is a discrete set of points $S \subseteq U$ such that any vertical segment of length $r$ confined to $U \setminus S$ must stay in the vertical strip of width $\delta$ centered on the segment.
\end{lemma}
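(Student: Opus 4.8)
The plan is to exploit the hypothesis that $r$ exceeds half the height of the strip in order to locate a single horizontal line that every vertical segment of length $r$ in the strip is forced to cross, and then to place the points of $S$ along that line, spaced finely, so that a continuously moving segment can never jump from one gap between points to the next. (I read the conclusion as a statement about motion: if a vertical segment of length $r$ is moved continuously through vertical segments of length $r$ lying in $U \setminus S$, then all of its positions lie in a fixed vertical strip of width $\delta$ containing the initial one.)

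First I would set coordinates $U = \{(x,y) : 0 < y < h\}$, where $h$ is the height of the strip, and note that a vertical segment of length $r$ contained in $U$ has the form $\{x\} \times [y_0, y_0 + r]$ with $0 \le y_0 \le h - r$. Since $r > h/2$, this gives $y_0 < h - r < h/2 < r \le y_0 + r$, so the horizontal line $\ell = \{y = h/2\}$ meets every such segment in exactly the one point $(x, h/2)$. The only role of the hypothesis $r > h/2$ is to make the interval $(h - r, r)$ of admissible crossing heights nonempty; any height in it would work in place of $h/2$. Next, given $\delta > 0$, I would take $S = \{(k\delta/2,\, h/2) : k \in \mathbb{Z}\}$, which is a discrete subset of $U$.

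Then, for any continuous path $\{\sigma_t\}_{t \in [0,1]}$ of vertical segments of length $r$ lying in $U \setminus S$, let $x(t)$ be the horizontal coordinate of $\sigma_t$. The point $(x(t), h/2)$ lies on $\sigma_t$, hence is not in $S$, so $x(t) \notin (\delta/2)\mathbb{Z}$ for every $t$; since $t \mapsto x(t)$ is continuous and avoids the discrete set $(\delta/2)\mathbb{Z}$, it stays in the single open interval of length $\delta/2$ between consecutive points of $(\delta/2)\mathbb{Z}$ that contains $x(0)$. That interval contains $x(0)$ and has length $\delta/2$, so every point of it is within distance $\delta/2 < \delta$ of $x(0)$; therefore every $\sigma_t$ lies in the vertical strip of width $\delta$ centered on $\sigma_0$, as desired.

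I do not expect a genuine obstacle here; essentially the whole content is the observation above. The one point that needs care is the choice of spacing: using spacing $\delta/2$ rather than $\delta$ is what makes the conclusion come out with the strip \emph{centered} on the segment, since the interval of reachable horizontal positions then contains the starting position and has length only $\delta/2$. If one were content with ``stays in some vertical strip of width $\delta$'' without the centering, spacing $\delta$ would already suffice.
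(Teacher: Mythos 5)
There is a genuine gap, and it comes from how you read the statement. You interpret ``confined to $U \setminus S$'' as a motion through \emph{vertical} segments only, so that the only data is the horizontal coordinate $x(t)$. But the lemma has to control arbitrary continuous motions of the segment in $U \setminus S$: only the \emph{initial} position is assumed vertical, and the segment is free to tilt and rotate as it moves. This is forced by the application: in Lemma~\ref{midpoint-in-box} the segment moves in $\Seg_{1, r}(D^2 \setminus S)$, and the whole point is to show that the angle map has trivial image on $\pi_1$, i.e.\ to control what happens when the segment \emph{rotates}. A version of the lemma that only constrains motions through vertical segments says nothing about the angle coordinate and cannot be used there. (Your own remark that the hypothesis $r >$ half the height plays almost no role is a symptom: in the real statement that hypothesis is what makes trapping possible at all.)

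Under the correct reading, your one-row construction $S = \{(k\delta/2,\, h/2)\}$ fails outright. Starting from a vertical segment crossing the midline at $(x_0, h/2)$ with $x_0$ in a gap, pivot the segment about that fixed point: for any angle $\theta$ short of horizontal, the chord of the strip through $(x_0,h/2)$ at angle $\theta$ has length $h/\cos\theta \geq h > r$, so the segment stays in $U$, and during the whole pivot it meets the line $y = h/2$ only at $(x_0,h/2)$, hence avoids $S$. Once nearly horizontal, slide the segment along its own line until its lower endpoint sits at $(x_0,h/2)$, then lift it slightly; it now lies entirely in the open upper half-strip, which contains no points of $S$, and since it is nearly horizontal it fits there. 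It can then translate horizontally arbitrarily far and reverse the procedure to become vertical again, so it is not trapped in any strip of width $\delta$. This is exactly why the paper removes \emph{two} rows of points, at heights $\pm b$ (the sets $\{((2k+1)a, \pm b)\}$), with $a/b$ and then $a, b$ chosen small relative to $r$ and $\delta$: a segment of length $r$ greater than the threshold $\sqrt{(a + a/b)^2 + (b+1)^2}$ is trapped in a narrow hourglass (Figure~\ref{fig-hourglass}), and the narrowness of the hourglass simultaneously bounds its horizontal drift by $\delta$ and prevents it from turning far enough toward horizontal to slip over or under a row. Any correct proof has to engage with these tilted escape routes; the crossing-point argument alone cannot.
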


\begin{figure}
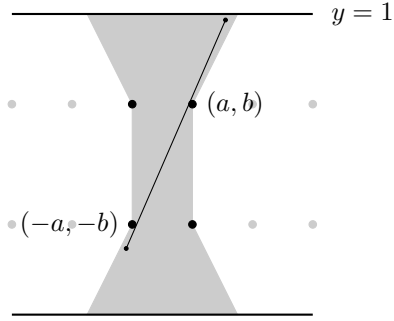

\begin{center}
\Hourglass
\end{center}
\caption{When the four points $(\pm a, \pm b)$ are removed from the strip $\{\abs{y} < 1\}$, any sufficiently long vertical segment gets trapped in an hourglass (shaded).}\label{fig-hourglass}
\end{figure}

\begin{proof}
Let $U$ be the strip defined in coordinates by
\[U = \{(x, y) \in \mathbb{R}^2 : \abs{y} < 1\}.\]
First suppose that we start with a vertical segment along the $y$--axis, and remove from $U$ the four points with coordinates $(\pm a, \pm b)$.  Then as long as the segment is more than a certain length---namely $\sqrt{\left( a + \frac{a}{b} \right)^2 + (b+1)^2}$---it is trapped in an hourglass-shaped region, shown in Figure~\ref{fig-hourglass}.

Given $r > 1$ and $\delta > 0$, we choose $S$ in the following way.  First we choose the ratio $\frac{a}{b}$ small enough that the width $2\frac{a}{b}$ of the corresponding hourglass is less than $\frac{\delta}{2}$ and its diagonal $2\sqrt{\left(\frac{a}{b}\right)^2 + 1}$ is less than $2r$.  Then we choose $a$ and $b$ in that ratio, small enough that the length $\sqrt{\left(a + \frac{a}{b} \right)^2 + (b+1)^2}$ is less than $r$.  We take $S$ to be the set of points $\{((2k + 1)a,\ \pm b)\}_{k \in \mathbb{Z}}$, so that no matter where we place a vertical segment of length $r$, it is trapped by the two points of $S$ immediately to its left and the two points of $S$ immediately to its right.
\end{proof}

The next lemma completes the construction needed for the proof of Theorem~\ref{less-than-1}.

\begin{lemma}\label{midpoint-in-box}
For all $\varepsilon > 0$, there exist $r < 1$ and a finite set $S$ of points in the unit disk $D^2$ such that the following is true.  Let $K$ be the set of configurations in $\Seg_{1, r}(D^2)$ for which the segment is in the vertical strip $[-\varepsilon, \varepsilon] \times \mathbb{R}$ and the midpoint of the segment is in the box $[-\varepsilon, \varepsilon] \times [-\varepsilon, \varepsilon]$.  Then we have
\[d\theta_1\vert_{\Seg_{1, r}(D^2 \setminus S) \setminus K} = 0.\]
\end{lemma}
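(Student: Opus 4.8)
The plan is to engineer a finite point set $S$ and a length $r<1$ so that any segment avoiding $S$ either is forced into the thin vertical strip with midpoint in the small box (i.e.\ lies in $K$), or is so constrained that the angle coordinate $\theta_1$ cannot run over all of $S^1$ on the complement $\Seg_{1,r}(D^2\setminus S)\setminus K$. The natural mechanism is Lemma~\ref{horizontal-strip}: a segment that is \emph{nearly} vertical and confined to a narrow horizontal strip can be pinned into a vertical channel of width $\delta$ by a discrete set of removed points. I would use several such strips, slightly tilted relative to one another, to cover all directions near vertical. Concretely, pick $r<1$ close enough to $1$ that a segment of length $r$ whose direction is within some small angular window of vertical must, if it is to fit in the unit disk at all, have its center near the center of the disk; this is the geometric content that makes the ``midpoint in the box'' condition automatic once the segment is sufficiently vertical.

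The key steps in order: (1) Choose the angular window. Because $r<1$, a segment of length $r$ that is exactly vertical can still translate freely, so the restriction must come entirely from the removed points, not from the boundary of $D^2$; fix a small angle $\eta$ and plan to handle all directions in $[\pi/2-\eta,\pi/2+\eta]$ (and by symmetry a neighborhood of every direction, but for the statement we only need to kill $d\theta_1$, so it suffices to trap the segment over \emph{one} open arc of directions — that already forces $\alpha$ non-surjective and hence $d\theta_1$ to restrict to zero). (2) Apply Lemma~\ref{horizontal-strip} inside $D^2$: intersect $D^2$ with a horizontal strip of height slightly less than $2r$ so that $r$ exceeds half the height, obtain for the chosen $\delta\le\varepsilon$ a discrete — hence, after intersecting with the compact $D^2$, finite — set $S$ trapping every vertical segment of length $r$ in a vertical channel of width $\delta$. (3) Upgrade ``vertical'' to ``nearly vertical'': show that a segment whose direction lies in the window $[\pi/2-\eta,\pi/2+\eta]$ and which avoids $S$ is still trapped near one of the channels, by a perturbation/continuity argument on the hourglass construction in the proof of Lemma~\ref{horizontal-strip} — the hourglass region is open and depends continuously on the tilt, so for $\eta$ small enough the same four points $((2k+1)a,\pm b)$ still confine tilted segments. (4) Combine: for such a trapped, nearly-vertical segment, the width-$\delta$ confinement plus $r<1$ forces the midpoint into $[-\varepsilon,\varepsilon]^2$, so the configuration lies in $K$; therefore on $\Seg_{1,r}(D^2\setminus S)\setminus K$ the segment's direction never enters the open arc $(\pi/2-\eta,\pi/2+\eta)$, the map $\theta_1$ misses an open set, and the pullback of the fundamental class of $S^1$ vanishes.

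The main obstacle I expect is step (3), the passage from genuinely vertical segments to segments in an angular window: the clean hourglass picture of Lemma~\ref{horizontal-strip} is stated only for vertical segments, and one must check that tilting by up to $\eta$ does not let the segment slip out between consecutive removed points $((2k+1)a,\pm b)$ and $((2k+3)a,\pm b)$, or escape vertically out the ends of the hourglass. This is really a quantitative estimate: the horizontal spacing $2a$ of the removed points and the hourglass half-width $a/b$ must be chosen jointly with $\eta$ and $r$ so that a segment tilted by $\eta$ still cannot thread the gaps, and so that its projection to the $x$-axis has diameter $r\cos(\pi/2-\eta)\approx r\eta$, which must be kept below $\delta\le\varepsilon$; this is why $\eta$ must be taken extremely small, and it forces $r$ to be taken close to $1$ but strictly below. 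A secondary point to be careful about: after intersecting the infinite strip of Lemma~\ref{horizontal-strip} with the unit disk, we must verify that the finitely many surviving points of $S$ still trap every admissible segment — a segment near the left or right edge of $D^2$ cannot fit if it is long and nearly vertical, so those positions are vacuous, but this should be spelled out rather than asserted.
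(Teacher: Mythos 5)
Your overall mechanism is wrong, and it is exactly the trap the paper warns against just before stating Theorem~\ref{lusternik-schnirelmann}: for any $r<1$ the angle map is surjective, and removing a finite set $S$ (together with the closed set $K$) cannot change that. For any direction whatsoever, the translates of a length-$r$ segment in that direction form a two-parameter family of positions inside $D^2$, while the translates meeting a fixed finite point set form only a one-parameter subfamily; so exactly vertical segments avoiding $S$ and lying far from the central strip --- hence outside $K$ --- always exist, for instance inside one of the hourglasses of Lemma~\ref{horizontal-strip} near $x=\tfrac12$. Consequently your step (4) fails on several counts: confinement of a vertical segment to the width-$\delta$ channel \emph{centered on the segment itself} does not push its $x$-coordinate toward $0$ (there are channels all across the disk); even in the central channel nothing forces the midpoint's $y$-coordinate into $[-\varepsilon,\varepsilon]$ (a vertical segment of length $r<1$ can sit high or low in the disk, which is precisely why separate constructions are needed there); and taking $r$ close to $1$ does not force nearly-vertical segments to be centered near the origin. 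So $\theta_1$ does not miss an open arc on $\Seg_{1,r}(D^2\setminus S)\setminus K$, and non-surjectivity cannot be the reason $d\theta_1$ restricts to zero.

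What the trapping of Lemma~\ref{horizontal-strip} actually buys is a statement about loops, not about the image of the angle map. The paper's proof shows $\alpha_*\pi_1\bigl(\Seg_{1,r}(D^2\setminus S)\setminus K\bigr)=0$, so that $\alpha$ lifts to the universal cover $\mathbb{R}$ of $S^1$ and is null-homotopic, which kills $d\theta_1$. Call a configuration trapped if every loop through it has null-homotopic angle image: a vertical segment caught in a narrow hourglass can never rotate to horizontal while staying in $U\setminus S$, so it cannot lie on a loop whose angle winds around $S^1$, whereas any loop with nonzero winding must pass through some vertical configuration. The construction then applies Lemma~\ref{horizontal-strip} three times --- a middle strip trapping vertical segments outside the central strip $[-\varepsilon_2,\varepsilon_2]\times\mathbb{R}$, plus an upper and a lower strip trapping vertical segments inside the central strip whose midpoints lie above $y=\varepsilon$ or below $y=-\varepsilon$ --- so that the only vertical configurations left untrapped are exactly those in $K$, which have been deleted. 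In this framework your step (3) about tilted segments is not needed in the form you propose: one only needs that a segment which is vertical at some moment cannot subsequently become horizontal while avoiding $S$, and that is what the hourglass already provides. (Your secondary point about finiteness of $S\cap D^2$ is fine and harmless.)
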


\begin{proof}
To show that the restriction $d\theta_1\vert_{\Seg_{1, r}(D^2 \setminus S) \setminus K}$ is zero, it suffices to show
\[\alpha_*\pi_1(\Seg_{1, r}(D^2 \setminus S) \setminus K) = 0,\]
where 
\[\alpha \co \Seg_{1, r}(D^2 \setminus S) \setminus K \rightarrow S^1\]
denotes the map that records the angle of the segment.  If this criterion on fundamental group is satisfied, then the map $\alpha$ lifts to the universal cover $\mathbb{R}$ of $S^1$, and so $\alpha$ is null-homotopic.

Let us say for a given configuration in $\Seg_{1, r}(D^2 \setminus S) \setminus K$ that a segment is ``trapped'' if every loop in $\Seg_{1, r}(D^2 \setminus S) \setminus K$ containing that configuration maps to a loop in $S^1$ that is null-homotopic.  Our goal is to construct $S$ so that every vertical segment is trapped; this then implies that every segment is trapped.

We apply Lemma~\ref{horizontal-strip} three times.  First we define a middle strip and remove points from it so that the vertical segments on either side of $[-\varepsilon, \varepsilon] \times \mathbb{R}$ are trapped.  Then we define upper and lower strips and remove points so that the vertical segments in the upper and lower parts of $[-\varepsilon, \varepsilon] \times \mathbb{R}$ are trapped.

Specifically, we choose $\varepsilon_1$ and $\varepsilon_2$ with $0 < \varepsilon_1 < \varepsilon_2 < \varepsilon$.  We define the middle strip $U_{\text{middle}}$ to be the smallest horizontal strip containing $D^2 \setminus ([-\varepsilon_1, \varepsilon_1] \times \mathbb{R})$, which has height very slightly less than $2$.  Then, applying Lemma~\ref{horizontal-strip} we find a length $r_{\text{middle}} < 1$ and a set of points $S_{\text{middle}} \subseteq D^2$ such that if a vertical segment of length $r_{\text{middle}}$ starts in $D^2 \setminus ([-\varepsilon_2, \varepsilon_2] \times \mathbb{R})$ and moves in $U_{\text{middle}} \setminus S_{\text{middle}}$, then it is trapped and stays in $U_{\text{middle}} \setminus ([-\varepsilon_1, \varepsilon_1] \times \mathbb{R})$.

We define the upper strip $U_{\text{upper}}$ to be the horizontal strip with boundaries $y = -\frac{1}{2} + \varepsilon$ and $y = 1$, and define the lower strip $U_{\text{lower}}$ to have boundaries $y = -1$ and $y = \frac{1}{2} - \varepsilon$, so that if a segment of length less than $1$ has midpoint above $y = \varepsilon$ it is contained in the upper strip, and if its midpoint is below $y = -\varepsilon$ it is in the lower strip.  We find a length $r_{\text{upper}} < 1$ and a set of points $S_{\text{upper}}\subseteq D^2$, such that if a vertical segment of length $r_{\text{upper}}$ starts in the part of the upper strip in $[-\varepsilon_2, \varepsilon_2] \times \mathbb{R}$ and moves in $U_{\text{upper}} \setminus S_{\text{upper}}$, then it is trapped and stays in $[-\varepsilon, \varepsilon] \times \mathbb{R}$.  Similarly we find $r_{\text{lower}}$ and $S_{\text{lower}}$.  

We set $r = \max(r_{\text{middle}}, r_{\text{upper}}, r_{\text{lower}})$ and $S = S_{\text{middle}} \cup S_{\text{upper}} \cup S_{\text{lower}}$.  Then every vertical segment is trapped: if it is outside $[-\varepsilon_2, \varepsilon_2] \times \mathbb{R}$ then it is trapped by the middle strip, if it is in $[-\varepsilon_2, \varepsilon_2] \times \mathbb{R}$ with midpoint above $\varepsilon$ then it is trapped by the upper strip, and if it is in $[-\varepsilon_2, \varepsilon_2] \times \mathbb{R}$ with midpoint below $-\varepsilon$ then it is trapped by the lower strip.
\end{proof}

\begin{proof}[Proof of Theorem~\ref{less-than-1}]
By Lemma~\ref{removing-finite-sets} it suffices to find $r < 1$ and a finite set of points $S \subseteq D^2$ such that we have
\[(d\theta_1 \wedge d\theta_2) \vert_{\Seg_{2, r}(D^2 \setminus S)} = 0.\]
We apply Lemma~\ref{midpoint-in-box} once to the first segment, and then again to the second segment with a right-angle rotation.  That is, for small $\varepsilon > 0$ we let $K_1$ be the set of configurations in $\Seg_{2, r}$ for which segment $1$ is in the vertical strip $[-\varepsilon, \varepsilon] \times \mathbb{R}$ and its midpoint is in the box $[-\varepsilon, \varepsilon] \times [-\varepsilon, \varepsilon]$, and let $K_2$ be the configurations for which segment $2$ is in the horizontal strip $\mathbb{R} \times [-\varepsilon, \varepsilon]$ and its midpoint is in $[-\varepsilon, \varepsilon] \times [-\varepsilon, \varepsilon]$.  If $\varepsilon$ is small and $r$ is not too small, then $K_1$ and $K_2$ are disjoint because on their intersection the two segments would have to cross.

Lemma~\ref{midpoint-in-box} gives $r < 1$ and two finite sets of points $S_1, S_2 \subseteq D^2$ such that
\[d\theta_1 \vert_{\Seg_{2, r}(D^2 \setminus S_1) \setminus K_1} = 0,\ d\theta_2 \vert_{\Seg_{2, r}(D^2 \setminus S_2)\setminus K_2} = 0.\]
Taking $S = S_1 \cup S_2$, we apply the Lusternik-Schnirelmann theorem (Theorem~\ref{lusternik-schnirelmann}) to obtain
\[(d\theta_1 \wedge d\theta_2) \vert_{\Seg_{2, r}(D^2 \setminus S)} = 0.\]
\end{proof}

\begin{question}\label{limit-zero}
Can we adapt the proof of Theorem~\ref{less-than-1} to show that
\[\lim_{n \rightarrow \infty} r_{\crit}(n) = 0?\]
\end{question}

\section{Spinning segments independently}\label{segments-independently}

In this section we construct a sequence of maps $k_n$ that spin $n$ segments independently.  Proposition~\ref{enlarging-segments} shows that the segments are not as long as they could be, but I do not know of a method for constructing better maps that is still nice to describe.  The segment lengths in $k_n$ are quite long, but their lengths are still proportional to $\frac{1}{\sqrt{n}}$, giving some evidence for Conjecture~\ref{strong-segments} above.

Figure~\ref{favorite-segment} depicts the construction of the maps $k_n$.  The lengths of the segments appearing in the various $k_n$ form a sequence $\ell_n$ that we compute later.  We construct the map
\[k_n \co (S^1)^n \rightarrow \Seg_{n, \ell_n}\]
recursively in $n$, as follows.  The goal is for the composition $\alpha_n \circ i_{n, \ell_n} \circ k_n$ to be the identity map on $(S^1)^n$.

For $n = 1$ we put $\ell_n = 2$ so that $k_1(\theta_1)$ is the configuration with one segment equal to a diameter at angle $\theta_1$.  For $n > 1$, we would like to construct the configuration $k_n(\theta_1, \ldots, \theta_n)$.  Inside the unit disk, we draw a medium-sized disk of radius $\frac{\ell_n}{\ell_{n-1}}$ and draw the $n$th segment of length $\ell_n$ tangent to the medium-sized disk and with endpoints on the boundary of the unit disk; this geometric relationship determines $\ell_n$ in terms of $\ell_{n-1}$.  We also require that the vector along the $n$th segment should point counterclockwise along the boundary of the medium-sized disk and have angle $\theta_n$.  Then inside the medium-sized disk we draw an $\frac{\ell_n}{\ell_{n-1}}$--scaled copy of $k_{n-1}(\theta_1, \ldots, \theta_{n-1})$, which has $n-1$ segments of length $\ell_{n-1}$.  The resulting picture is $k_n(\theta_1, \ldots, \theta_n)$.

\begin{figure}
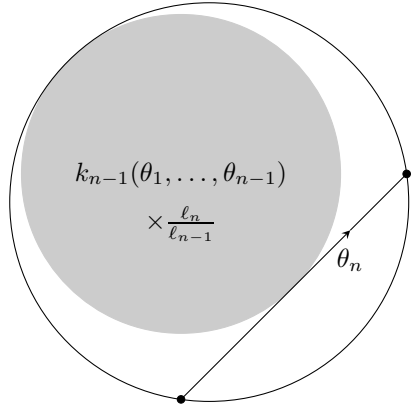

\begin{center}
\FavoriteSegments
\end{center}
\caption{The $n$--segment configuration $k_n(\theta_1, \dots, \theta_n)$ is constructed by placing the $n$th segment tangent to a scaled copy of the $(n-1)$--segment configuration $k_{n-1}(\theta_1, \ldots, \theta_{n-1})$.  Each segment has length $\ell_n$.}\label{favorite-segment}
\end{figure}

This procedure recursively determines the sequence $\ell_n$ of the lengths of the segments in $k_n$.  The construction implies a bound on $r_{\crit}(n)$: the composition $\alpha_n \circ i_{n, \ell_n} \circ k_n$ is the identity map on $(S^1)^n$, so the pullback class $(d\theta_1 \wedge \cdots \wedge d\theta_n)\vert_{\Seg_{n, \ell_n}}$ must be nonzero and we have the inequality
\[\ell_n \leq r_{\crit}(n).\]
However, the next proposition shows that $\ell_n$ is asymptotically no larger than the bound already proven in Proposition~\ref{trivial-bound}.

\begin{proposition}
The sequence $\ell_n$ of lengths appearing in the map $k_n$ satisfies
\[\ell_n \sim \frac{1}{\sqrt{n}}.\]
\end{proposition}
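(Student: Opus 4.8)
The plan is to first convert the recursive geometric description of $k_n$ into an explicit recurrence for $\ell_n$, and then analyze that recurrence by a standard substitution.

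First I would pin down the geometry at stage $n$. Place the unit disk at the origin and write $\rho_n = \ell_n/\ell_{n-1}$ for the radius of the medium-sized disk. The $n$th segment is a chord of the unit disk of length $\ell_n$, so its distance from the origin is $\sqrt{1 - (\ell_n/2)^2}$; and the medium-sized disk, being the disk inscribed in the circular segment cut off by this chord, is tangent to the chord and internally tangent to the unit circle. Writing out these two conditions — the center of the medium-sized disk lies on the perpendicular from the origin to the chord, at distance $\rho_n$ from the chord and distance $1-\rho_n$ from the origin — forces the chord to be at distance $2\rho_n - 1$ from the origin, hence $\ell_n = 2\sqrt{1 - (2\rho_n - 1)^2} = 4\sqrt{\rho_n(1-\rho_n)}$. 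Combining this with $\ell_n = \rho_n\ell_{n-1}$ and eliminating $\rho_n$ yields
\[ \ell_n = \frac{16\,\ell_{n-1}}{\ell_{n-1}^2 + 16}, \qquad \ell_1 = 2. \]
As a sanity check, this gives $\ell_2 = \tfrac{32}{20} = 1.6$, matching the value in the introduction.

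Next I would linearize. Taking reciprocals turns the recurrence into $\tfrac{1}{\ell_n} = \tfrac{1}{\ell_{n-1}} + \tfrac{\ell_{n-1}}{16}$, so setting $b_n = 1/\ell_n$ gives
\[ b_n = b_{n-1} + \frac{1}{16\,b_{n-1}}, \qquad b_1 = \tfrac12. \]
In particular $(b_n)$ is strictly increasing, so $b_n \geq \tfrac12$ for all $n$. Squaring,
\[ b_n^2 = b_{n-1}^2 + \frac18 + \frac{1}{256\,b_{n-1}^2}, \]
and the last term lies in $\bigl(0, \tfrac{1}{64}\bigr]$ because $b_{n-1} \geq \tfrac12$. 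Telescoping from $1$ to $n$ gives $\tfrac14 + \tfrac{n-1}{8} \leq b_n^2 \leq \tfrac14 + \tfrac{9(n-1)}{64}$, so $b_n \sim \sqrt{n}$, and therefore $\ell_n \sim \tfrac{1}{\sqrt{n}}$.

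The only real work is the first step: carefully setting up the inscribed-disk tangency conditions in Figure~\ref{favorite-segment} and checking that they are exactly what determines $\ell_n$ from $\ell_{n-1}$. Once the recurrence is in hand the rest is routine, but the substitution $b_n = 1/\ell_n$ is essential: the crude estimate $b_n \geq b_{n-1}$ alone only shows $\ell_n$ is bounded, and $\ell_{n-1} \leq 2$ alone only gives $\ell_n \gtrsim \tfrac1n$; one needs the squared recurrence, whose increments telescope, to obtain the sharp $\Theta(1/\sqrt n)$ behavior.
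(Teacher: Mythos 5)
Your proof is correct and follows essentially the same route as the paper: your reciprocal substitution $b_n = 1/\ell_n$ recovers, up to the rescaling $d_n = 4b_n$, exactly the paper's recursion $d_n = d_{n-1} + \tfrac{1}{d_{n-1}}$ (which the paper derives from the rescaled picture and similar triangles rather than explicit tangency coordinates), and your squaring-and-telescoping estimate is the same argument as the paper's inductive bound $2(n+1) \le d_n^2 \le 3(n+1)$. No gaps; the geometric setup you chose (medium disk tangent to the chord and internally tangent to the unit circle) is the intended one, as your check $\ell_2 = 1.6$ confirms.
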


\begin{proof}
The computation is a little easier if we rescale by a factor $\frac{2}{\ell_n}$.  Instead of spinning segments of length $\ell_n$ in a disk of radius $1$, we imagine spinning segments of length $2$ in a disk of radius $\frac{2}{\ell_n}$, or of diameter $\frac{4}{\ell_n}$, which we denote as $d_n$.  We compute $\ell_n$ by computing $d_n$.

\begin{figure}
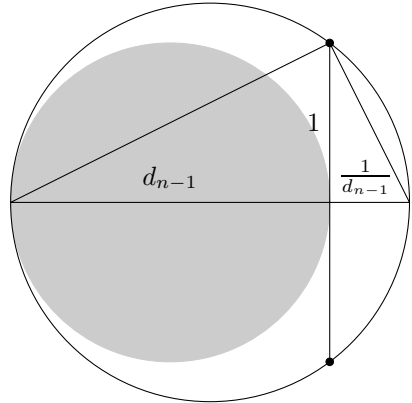

\begin{center}
\SegmentDiameters
\end{center}
\caption{The sequence of diameters $d_n$ satisfies the recursion $d_n = d_{n-1} + \frac{1}{d_{n-1}}$.  This picture is a rescaled version of the construction of $k_n$; the radius $1$ and the segment length $\ell_n$ are replaced by the radius $\frac{1}{2}d_n$ and the segment length $2$.}\label{diameters}
\end{figure}

Figure~\ref{diameters} shows how a computation with similar triangles implies the recursion
\[d_n = d_{n-1} + \frac{1}{d_{n-1}}.\]
To determine the asymptotic behavior, we prove the following claim: for all $n$, we have
\[2(n+1) \leq (d_n)^2 \leq 3(n+1).\]
The base case $n = 1$ has $d_1 = 2$.  Then $d_1^2$ is equal to $4$, which is indeed between $2(n + 1) = 4$ and $3(n+1) = 6$.  Suppose $n > 1$.  Squaring both sides of the recursion we obtain
\[d_n^2 = d_{n-1}^2 + 2 + \frac{1}{d_{n-1}^2}.\]
Because $d_{n-1}^2 > 1$ for all $n$, the quantity $2 + \frac{1}{d_{n-1}^2}$ is strictly between $2$ and $3$, and so
\[d_{n-1}^2 + 2 < d_n^2 < d_{n-1}^2 + 3,\]
and we obtain the desired inequality by plugging in the inductive hypothesis.

Thus $d_n \sim \sqrt{n}$ and $\ell_n \sim \frac{1}{\sqrt{n}}$.
\end{proof}

It is reasonable to hope that the lengths $\ell_n$ are close to the true values of $r_{\crit}(n)$.  In the case $n = 2$ we do have $\ell_2 = r_{\crit}(2)$, because if two segments are longer than $\ell_2 = 1.6$ it is impossible to arrange them at right angles.  But for all $n > 2$ it is possible to deform the map $k_n$ slightly to make the segments longer.

\begin{proposition}\label{enlarging-segments}
For every $n > 2$, there is a map $k_n'$ homotopic to $k_n$, in which the segments are translated but not rotated from their positions under $k_n$, such that the image of $k_n'$ lies in $\Seg_{n, r}$ for some $r > \ell_n$.
\end{proposition}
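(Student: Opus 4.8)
The idea is to produce $k_n'$ by perturbing $k_n$ through a continuous family of translations that loosens the tightly nested packing, and the point where $n>2$ is used is at the innermost level of the recursion.

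First I would record the contact structure of $k_n(\theta)$ forced by the recursive construction. Segment $n$ is a chord of $D^2$ with both endpoints on $\partial D^2$ and tangent to the medium disk $M_n$ of radius $\ell_n/\ell_{n-1}$; the relation defining $\ell_n$ says precisely that $M_n$ is the largest disk inside $D^2$ lying on the center side of segment $n$, so $M_n$ touches $\partial D^2$ at a single point $Q=Q(\theta_n)$, antipodal on $M_n$ to the point $Q'$ where segment $n$ meets $M_n$. Inside $M_n$ sits the $(\ell_n/\ell_{n-1})$-scaled copy of $k_{n-1}(\theta_1,\dots,\theta_{n-1})$, which touches the circle $\partial M_n$ only at finitely many points; in particular its outermost segment, segment $n-1$, is a chord of $M_n$ at distance $(\ell_n/\ell_{n-1})\sqrt{1-\ell_{n-1}^2/4}$ from the center of $M_n$, which is positive exactly because $n-1\ge 2$ forces $\ell_{n-1}<2$. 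For $n=2$ the analogue of segment $n-1$ is instead a full diameter of the innermost disk, and this is the feature whose absence is responsible for $r_{\crit}(2)=\ell_2$.

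The loosening move, performed for each $\theta$, is: translate the medium disk $M_n$ together with everything it contains by a small vector $t\,u(\theta)$, where $u(\theta)$ is the unit vector from the center of $M_n$ in the direction carrying $Q$ off $\partial D^2$ (a continuous, nonvanishing choice since $M_n$ is off-center for $n\ge 2$), while segment $n$ is held fixed. For $t$ small this stays a valid configuration: internal distances inside $M_n$ are unchanged, $M_n$ stays inside $D^2$ but now misses $\partial D^2$, and since segment $n$ is tangent to $M_n$ at $Q'$ and the nearest part of segments $1,\dots,n-1$ is a definite distance beyond $Q'$ inside $M_n$, sliding $M_n$ this far past segment $n$ creates no new crossing — here is where the positive distance above, hence $n\ge 3$, is used. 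After the move, (i) the point $Q$ has moved a distance of order $t$ off $\partial D^2$, so any of segments $1,\dots,n-1$ that had been pinned against $\partial D^2$ through $Q$ now has room of order $t$ there, and (ii) a gap of positive width still remains on the center side of segment $n$, into which it can be slid toward the center of $D^2$ and thereby lengthened by a positive amount. I would then iterate the same move one level further in — applying it to the scaled copy of $k_{n-1}$ inside $M_n$, using the corresponding gap there — and so on through the $n-2$ nested levels, down to the innermost scaled copy of $k_2$, where the two segments are handled directly: the diameter segment uses the room already opened near the outermost boundary contacts, and the chord segment is lengthened as in (ii). The result is a configuration $k_n'(\theta)$ obtained from $k_n(\theta)$ by translating, but not rotating, the segments, with $\tau(k_n'(\theta))>\ell_n$; since the whole construction depends continuously on $\theta$ and $(S^1)^n$ is compact, $r:=\min_\theta\tau(k_n'(\theta))>\ell_n$ works, and the translations furnish a homotopy $k_n'\simeq k_n$.

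The main obstacle is the validity check for the loosening moves: one must show that sliding a medium-disk block inward past the segment tangent to it never introduces a crossing, and that the moves at the successive nested levels remain compatible. This is exactly the step that genuinely fails for $n=2$, so the argument must use in an essential way that every outermost chord other than the innermost diameter is a proper chord of its ambient disk, leaving a gap next to the relevant tangency; the care needed is in making these gaps, and the resulting room, uniform in $\theta$, including near the exceptional configurations where an inner segment grazes one of these tangency points and a tweaked version of the move is required. By comparison, the continuity and compactness at the end of the argument should be routine.
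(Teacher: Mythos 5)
Your overall strategy --- translate the nested block of inner segments to open up room, then enlarge --- is the same as the paper's, but the step on which your translation move rests is false as stated. You slide $M_n$ together with its contents by a fixed small amount $t$ in the direction from $Q$ toward $Q'$ (i.e.\ toward segment $n$), justified by the claim that ``the nearest part of segments $1,\dots,n-1$ is a definite distance beyond $Q'$ inside $M_n$.'' The quantity you compute, $(\ell_n/\ell_{n-1})\sqrt{1-\ell_{n-1}^2/4}$, is the distance from segment $n-1$ to the \emph{center} of $M_n$; it shows the chord is not a diameter, but says nothing about its distance to the boundary point $Q'$. The endpoints of segment $n-1$ lie on $\partial M_n$, and as $\theta_{n-1}$ varies --- independently of $\theta_n$, which is what determines where $Q'$ and $Q$ sit on $\partial M_n$ --- they sweep the entire circle. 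So there is an open set of angle tuples where an endpoint of segment $n-1$ is at, or within $t$ of, $Q'$ (touching or nearly touching segment $n$), and for these your slide pushes segment $n-1$ across segment $n$, so the image leaves the configuration space entirely; likewise there are angles where the inner segments genuinely touch $\partial D^2$ at $Q$. Moreover no translation direction depending only on $\theta_n$ can work: for fixed $\theta_n$, configurations touching at $Q'$ force a component toward $Q$, while configurations touching at $Q$ force a component toward $Q'$. Your closing remark that ``exceptional configurations where an inner segment grazes one of these tangency points'' need ``a tweaked version of the move'' concedes exactly the point at issue; it is not a peripheral tweak but the whole content of the proposition.

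The paper's proof is aimed precisely at this difficulty. It first reduces to $n=3$: since the lengths $\ell_n$ are generated by the recursion, improving the seed $k_3$ to a $k_3'$ with longer segments improves every subsequent length, so no iteration through $n-2$ nested levels is needed. It then observes the dichotomy that in every configuration of $k_3$ the inner segments can touch segment $3$ only at $Q'$ and the boundary only at $Q$, and never both at once --- the possible contact points of segments $1,2$ with $\partial M_3$ (the two endpoints of segment $2$ and the tangency point of the inner disk) are never an antipodal pair, whereas $Q$ and $Q'$ are antipodal. This licenses a translation of segments $1$ and $2$ together in a direction that depends (continuously) on the configuration, after which segment $3$ is lengthened keeping its endpoints on $\partial D^2$ and segments $1,2$ are enlarged by slightly scaling their disk. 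To repair your argument you would need such a configuration-dependent choice of direction (made continuously over $(S^1)^n$), or the paper's reduction to $n=3$; compactness cannot rescue the uniform slide, because the obstruction occurs on an open set of angles.
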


\begin{proof}
Because the lengths $\ell_n$ are constructed recursively, it suffices to prove the proposition for $n = 3$ only.  In every configuration in the image of $k_3$, we can check that either segments $1$ and $2$ do not touch segment $3$, or they do not touch the boundary of the unit disk.  Thus, we may modify $k_3$ by a translation of segments $1$ and $2$ together so that in every configuration they touch neither segment $3$ nor the boundary of the unit disk.  Then we can enlarge segment $3$ slightly, keeping its endpoints on the boundary, and enlarge segments $1$ and $2$ by scaling their medium-sized disk slightly, to obtain $k_3'$.
\end{proof}

\begin{question}
Among maps $k_n'$ as in Proposition~\ref{enlarging-segments}, what is the maximum possible length of the segments?  Is it still asymptotically proportional to $\frac{1}{\sqrt{n}}$?
\end{question}

\begin{question}\label{balance}
How can the idea of balanced configurations be adapted to find the values of $r$ for which $\Seg_{n, r}$ changes homotopy type?
\end{question}

\begin{question}\label{numerically}
How can we compute (perhaps numerically) $r_{\crit}(n)$ for small values of $n$?  For instance, what is $r_{\crit}(3)$?
\end{question}

\begin{question}
In the $d$--dimensional ball $B^d$, we can consider the configuration space of $n$ oriented balls with dimension $d-1$ and radius $r$.  Instead of having an angle map, this space has a map to $(S^1)^n$ recording the unit normal vector to each of the codimension--$1$ balls.  What is the maximum radius $r$ such that this normal map has a continuous section?  How does the asymptotic behavior compare to the case of dimension~$2$?  This question was suggested by Larry Guth.
\end{question}

\begin{question}
We can also consider segments in $B^d$, which have an angle map from $\Seg_{n, r}(B^d)$ to $(S^{d-1})^n$.  What is the maximum radius $r$ such that this angle map has a continuous section?  If we fix $n$ and increase $d$, does $r$ increase or decrease?  This question was suggested by Jean-Fran\c{c}ois Lafont.
\end{question}

\begin{question}
We can consider the configuration space of countably many segments of length $r$ in $D^2$, with an angle map to the product of countably many copies of $S^1$.  Can we show that there does not exist $r$ for which this angle map has a continuous section?  (This statement would be implied by an affirmative answer to Question~\ref{limit-zero}.)  This question was suggested by Clara L\"oh.
\end{question}

\section{Small numbers of disks}\label{four}

We turn back to the questions about disk configuration spaces.  In order to build intuition about how $\ker i_{n, r}^* \subseteq H^*(\Conf_n)$ grows with $r$, it makes sense to try computing this space for small values of $n$.  In this section we consider the case $n = 4$ and compute $\ker i_{4, r}^*$ for all $r$.  (The simpler cases of $n = 2$ and $n = 3$ are easily computed by the same methods.)  The results of the computation are presented as a series of lemmas.

\begin{lemma}\label{first}
For all $r \leq \frac{1}{4}$, we have $\ker i_{4, r}^* = 0$.
\end{lemma}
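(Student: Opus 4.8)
The plan is to read this off the Baryshnikov--Bubenik--Kahle Morse theory, i.e.\ from Theorems~\ref{bbk} and~\ref{stress-graph-length}, with almost no extra work; the only point that needs a moment's care is the endpoint $r = \frac{1}{4}$. Since $i_{4, r}$ factors through $i_{4, r'}$ whenever $r' \leq r$, the kernel $\ker i_{4, r}^*$ only grows with $r$, so it is enough to treat the single value $r = \frac{1}{4}$; but it is cleanest to dispatch the open range $r < \frac{1}{4}$ first and then come back to the endpoint.

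For $r < \frac{1}{4}$: by Theorem~\ref{stress-graph-length} the least radius at which $\Conf_{4, r'}$ admits a balanced configuration is $\frac{1}{4}$, so for every $r' \in [0, \frac{1}{4})$ there is no balanced configuration. (For $r' = 0$ this is automatic: every edge of a stress graph would have to join two points at distance $0$, and a stress graph must have at least one edge.) Since $\Conf_{4, 0} = \Conf_4$, applying Theorem~\ref{bbk} to the interval $[0, r]$ shows that $\Conf_{4, r}$ is a deformation retract of $\Conf_4$, so $i_{4, r}$ is a homotopy equivalence, $i_{4, r}^*$ is an isomorphism, and $\ker i_{4, r}^* = 0$.

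The remaining case is $r = \frac{1}{4}$, which Theorem~\ref{bbk} as stated does not quite reach, because $\Conf_{4, 1/4}$ does contain a balanced configuration: the four disks of radius $\frac{1}{4}$ placed collinearly along a diameter, consecutive ones tangent and the two outermost tangent to $\partial D^2$, with unit weights on all five edges of the stress graph. The point needed is that the homotopy type of the superlevel set $\Conf_{4, r}$ has not yet changed at the critical radius itself, only once $r$ strictly exceeds it --- the usual Morse-theoretic phenomenon (compare $x \mapsto x^2$, whose sublevel set at the critical value $0$ is contractible, like the nearby sublevel sets from above). This is part of the BBK framework behind Theorem~\ref{bbk}: the flow proving that $\Conf_{4, 1/4 - \varepsilon}$ deformation retracts onto smaller superlevel sets extends to identify $\Conf_{4, 1/4}$ with $\Conf_{4, 1/4 - \varepsilon}$, hence with $\Conf_4$. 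Combined with the previous paragraph this gives $\ker i_{4, 1/4}^* = 0$, and by monotonicity $\ker i_{4, r}^* = 0$ for all $r \leq \frac{1}{4}$.

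So the whole statement is formal given the cited theorems, and the one thing to watch --- the main obstacle --- is exactly the endpoint $r = \frac{1}{4}$: one must know that the lone balanced configuration sitting there does not already kill any cohomology, which comes from the ``left-continuity'' of the homotopy type of $\Conf_{4, r}$ at a critical radius. This is sharp: Theorem~\ref{first-cohom-change} shows $\ker i_{4, r}^*$ becomes nonzero for every $r > \frac{1}{4}$, and indeed $r_{\min}(4, 3) = \frac{1}{4}$ by Lemma~\ref{lower}.
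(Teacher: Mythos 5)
Your proposal is correct and follows essentially the same route as the paper, whose entire proof is the one-line citation of Theorem~\ref{stress-graph-length} (with Theorem~\ref{bbk} implicitly supplying the deformation retraction onto $\Conf_{4}$). Your extra care at the endpoint $r = \tfrac{1}{4}$ --- noting that Theorem~\ref{bbk} as stated needs no balanced configuration on the \emph{closed} interval, so the critical radius itself requires the standard min-type Morse fact that the superlevel set at a critical value still has the homotopy type of the nearby larger superlevel sets --- is a genuine subtlety that the paper glosses over rather than a deviation from its argument.
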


\begin{proof}
This is an immediate consequence of Theorem~\ref{stress-graph-length}.
\end{proof}

For $r > \frac{1}{4}$, we use the following sublemma to show that $\ker i_{4, r}^*$ contains all of $H^3(\Conf_n)$ and some of $H^2(\Conf_n)$.

\begin{sublemma}\label{delete-diagonal}
Let $(S^1)^3) \setminus \{\theta_1 = \theta_2 = \theta_3\}$ denote the subspace of $(S^1)^3$ where the three coordinates $\theta_1, \theta_2, \theta_3$ are not equal, and let $i$ denote the inclusion of this subspace into $(S^1)^3$.  Then $\ker i^* \subset H^*((S^1)^3)$ is generated by $d\theta_1 \wedge d\theta_2 \wedge d\theta_3$ (in degree 3) and $d\theta_2\wedge d\theta_3 - d\theta_1 \wedge d\theta_3 + d\theta_1 \wedge d\theta_2$ (in degree 2).
\end{sublemma}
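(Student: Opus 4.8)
The plan is to exhibit $X := (S^1)^3 \setminus \{\theta_1 = \theta_2 = \theta_3\}$ as (up to homotopy) a product, and read off $\ker i^*$ from the K\"unneth formula. First I would apply the linear change of coordinates $\phi_1 = \theta_2 - \theta_1$, $\phi_2 = \theta_3 - \theta_1$, which is an automorphism of $(S^1)^3$ (given by an integer matrix of determinant $1$) carrying the thin diagonal onto $S^1 \times \{(0,0)\}$, where the $S^1$ factor is the $\theta_1$--axis. Thus $X$ is homeomorphic to $S^1 \times (T^2 \setminus \{(0,0)\})$, and since $T^2$ minus a point deformation retracts onto its $1$--skeleton $S^1 \vee S^1$, we get a homotopy equivalence $X \simeq S^1 \times (S^1 \vee S^1)$ compatible with the inclusion into $S^1 \times T^2 \cong (S^1)^3$.

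From here the K\"unneth formula gives $H^0(X) = \mathbb{Z}$, $H^1(X) = \mathbb{Z}^3$, $H^2(X) = \mathbb{Z}^2$, and $H^3(X) = 0$, with $H^2(X)$ having as a basis the two classes $d\theta_1 \cup a$ and $d\theta_1 \cup b$, where $a, b \in H^1(S^1 \vee S^1)$ are dual to the two circles and are the restrictions of $d\phi_1, d\phi_2$. Comparing with $H^*((S^1)^3)$ (which has ranks $1, 3, 3, 1$) I would conclude: $i^*$ is an isomorphism in degrees $0$ and $1$, so $\ker i^*$ vanishes there; in degree $3$, since $H^3(X) = 0$, we have $\ker i^* = H^3((S^1)^3) = \mathbb{Z}\langle d\theta_1 \wedge d\theta_2 \wedge d\theta_3\rangle$; and in degree $2$ the image of $i^*$ already contains the two independent classes $d\theta_1 \cup a$ and $d\theta_1 \cup b$, so $\ker i^*$ in degree $2$ has rank exactly $1$.

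It remains to pin down the degree-$2$ generator. Expanding in the new coordinates,
\[
d\phi_1 \wedge d\phi_2 = (d\theta_2 - d\theta_1)\wedge(d\theta_3 - d\theta_1) = d\theta_2 \wedge d\theta_3 - d\theta_1 \wedge d\theta_3 + d\theta_1 \wedge d\theta_2 .
\]
This is one of the standard basis vectors of $H^2((S^1)^3)$ in the $\phi$--coordinates, hence primitive, and it restricts to zero on $X$ because the projection $S^1 \times T^2 \to T^2$ sends $X$ into $T^2 \setminus \{(0,0)\}$, whose $H^2$ vanishes, while $d\phi_1 \wedge d\phi_2$ is the pullback of the fundamental class of $T^2$. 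A primitive class lying in a rank-$1$ kernel must generate that kernel, so $\ker i^*$ in degree $2$ is exactly $\mathbb{Z}\langle d\theta_2 \wedge d\theta_3 - d\theta_1 \wedge d\theta_3 + d\theta_1 \wedge d\theta_2\rangle$. As a consistency check, $d\theta_1 \wedge d\theta_2 \wedge d\theta_3 = d\theta_1 \cup (d\phi_1 \wedge d\phi_2)$, so the degree-$3$ kernel element is automatically forced by the degree-$2$ one.

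The only genuine content is the homotopy equivalence $T^2 \setminus \{pt\} \simeq S^1 \vee S^1$ together with the bookkeeping of which basis elements survive under $i^*$; the one point to be careful about is arguing over $\mathbb{Z}$ rather than just rationally that the two listed classes generate the full kernel, which is exactly what the primitivity observation takes care of.
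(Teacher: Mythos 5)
Your proposal is correct and follows essentially the same route as the paper: both proofs apply an integral linear change of coordinates on $(S^1)^3$ that straightens the diagonal into a coordinate circle and then use the deformation retraction of the punctured $2$--torus factor onto a wedge of two circles (the paper phrases this as deleting the $3$--cell and one $2$--cell from the standard cell structure and reading off $\ker i^*$ by cellular cohomology). The only difference is bookkeeping --- you compute via K\"unneth and a primitivity argument to pin down the kernel over $\mathbb{Z}$, while the paper reads the same answer directly from the cellular cochain complex --- and both are sound.
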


\begin{proof}
Thinking of $S^1$ as $\mathbb{R}/\mathbb{Z}$, we make the following change of coordinates:
\begin{align*}
\varphi_1 &= \theta_1,\\
\varphi_2 &= \theta_2 - \theta_1 + \frac{1}{2},\\
\varphi_3 &= \theta_3 - \theta_2 + \frac{1}{2}.
\end{align*}
We use the cell decomposition of $(S^1)^3$ into eight cells, each obtained by setting some subset of $\varphi_1$, $\varphi_2$, and $\varphi_3$ to zero.  The set $\{\theta_1 = \theta_2 = \theta_3\}$ is the same as the set $\{\varphi_2 = \varphi_3 = \frac{1}{2}\}$, and the subspace $(S^1)^3 \setminus \{\varphi_2 = \varphi_3 = \frac{1}{2}\}$ deformation retracts onto the space $\{\varphi_2 = 0\} \cup \{\varphi_3 = 0\}$; this space is obtained by deleting the $3$--dimensional cell spanned by $\varphi_1$, $\varphi_2$, and $\varphi_3$ and the $2$--dimensional cell spanned by $\varphi_2$ and $\varphi_3$.  Using cellular cohomology we see that $\ker i^*$ is generated by $d\varphi_1 \wedge d\varphi_2 \wedge d\varphi_3 = d\theta_1 \wedge d\theta_2 \wedge d\theta_3$ and $d\varphi_2 \wedge d\varphi_3 = d\theta_2\wedge d\theta_3 - d\theta_1 \wedge d\theta_3 + d\theta_1 \wedge d\theta_2$.
\end{proof}

In what follows, we use ``ordered forest'' to refer both to the combinatorial objects themselves and to the resulting generators of $H^*(\Conf_n)$.  These generators are a priori only determined up to a sign, because the angle maps $\alpha_G$ are only determined up to reordering the coordinates of the target.  However, in any ordered forest we can order the edges according to their terminal endpoints (which are all different).  Under this ordering every ordered forest corresponds uniquely to a generator of $H^*(\Conf_n)$.

\begin{lemma}\label{first-ker}
For all $r > \frac{1}{4}$, the kernel $\ker i_{4, r}^*$ contains all six $3$--edge ordered forests, and, for each $3$--edge ordered forest, the signed sum of its three $2$--edge ordered subforests, as shown in Figure~\ref{forests}.  These twelve elements of $\ker i_{4, r}^*$ are linearly independent.
\end{lemma}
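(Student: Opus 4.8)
The plan is to derive both families of kernel elements at once, by applying Sublemma~\ref{delete-diagonal} along the angle maps of the six $3$--edge ordered forests, and then to establish linear independence by a triangularity argument in the free basis of Theorem~\ref{Arnold}.

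First I would record that a $3$--edge ordered forest $G$ on the vertex set $\{1,2,3,4\}$ is automatically connected (it is acyclic with $4$ vertices and $3$ edges), and that there are exactly six of them: the edge into $2$ is forced to be $1\to 2$, the edge into $3$ is $1\to 3$ or $2\to 3$, and the edge into $4$ is $1\to 4$, $2\to 4$, or $3\to 4$. Since these are all the $3$--edge ordered forests on four vertices, they already form the free basis of $H^3(\Conf_4)$ given by Theorem~\ref{Arnold}, so in particular they are linearly independent (and span $H^3(\Conf_4)$). Next, exactly as in the proof of Theorem~\ref{first-cohom-change}, for every $r>\tfrac14$ the composition
\[\alpha_G\circ i_{4,r}\co \Conf_{4,r}\longrightarrow (S^1)^3\]
has image missing the diagonal $\{\theta_1=\theta_2=\theta_3\}$: if the three edges of the spanning tree $G$ all pointed the same way, the four centers would be collinear, hence pairwise at distance at least $2r$ along a line, so the configuration would have diameter at least $6r>2(1-r)$, which is impossible in $D^2$. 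Therefore $\alpha_G\circ i_{4,r}$ factors through the inclusion $i\co(S^1)^3\setminus\{\theta_1=\theta_2=\theta_3\}\hookrightarrow (S^1)^3$, and so $i_{4,r}^*\circ\alpha_G^*$ kills $\ker i^*$. Feeding in the two generators of $\ker i^*$ from Sublemma~\ref{delete-diagonal}: the pullback of $d\theta_1\wedge d\theta_2\wedge d\theta_3$ is $d\theta^{\wedge G}$, and (ordering the edges $e_1,e_2,e_3$ of $G$ by terminal vertex) the pullback of $d\theta_2\wedge d\theta_3-d\theta_1\wedge d\theta_3+d\theta_1\wedge d\theta_2$ is
\[d\theta^{\wedge(G\setminus e_1)}-d\theta^{\wedge(G\setminus e_2)}+d\theta^{\wedge(G\setminus e_3)},\]
which is exactly the signed sum of the three $2$--edge ordered subforests in Figure~\ref{forests}. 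Both lie in $\ker i_{4,r}^*$, which produces all twelve elements.

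It remains to check that the twelve are linearly independent; since six lie in $H^3(\Conf_4)$ and six in $H^2(\Conf_4)$, this splits into the two degrees, and degree $3$ is done above. For degree $2$, the key observation is that in each $3$--edge ordered forest $G$ the first edge $e_1$ is $1\to 2$, so $G\setminus e_1$ is the only one of the three $2$--edge subforests that contains no edge into vertex $2$, whereas $G\setminus e_2$ and $G\setminus e_3$ both contain $1\to 2$; moreover, as $G$ runs over the six forests the subforests $G\setminus e_1$ are pairwise distinct (each is pinned down by the sources of its two edges). Consequently, when a linear combination $\sum_G c_G\bigl(d\theta^{\wedge(G\setminus e_1)}-d\theta^{\wedge(G\setminus e_2)}+d\theta^{\wedge(G\setminus e_3)}\bigr)$ is expanded in the basis of $2$--edge ordered forests from Theorem~\ref{Arnold}, the coefficient of the basis element $d\theta^{\wedge(G\setminus e_1)}$ is precisely $c_G$; setting the combination to zero forces every $c_G=0$.

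There is no serious obstacle here: the geometric content is entirely carried by Theorem~\ref{first-cohom-change} and Sublemma~\ref{delete-diagonal}, and the only thing to watch is the sign bookkeeping — making sure $\alpha_G^*$ sends $d\theta_i\wedge d\theta_j$ to $d\theta^{\wedge(G\setminus e_k)}$ with the sign dictated by the terminal-vertex ordering, and then running the triangularity argument above. The mild risk is simply getting an index or a sign wrong while matching the pulled-back classes to the picture in Figure~\ref{forests}.
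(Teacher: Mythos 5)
Your proposal is correct and follows essentially the same route as the paper: factor $\alpha_G \circ i_{4,r}$ through $(S^1)^3 \setminus \{\theta_1 = \theta_2 = \theta_3\}$ and pull back the two generators of $\ker i^*$ from Sublemma~\ref{delete-diagonal}, then get linear independence from Theorem~\ref{Arnold} via the observation that the leading term $d\theta^{\wedge(G\setminus e_1)}$ of each degree--$2$ element appears in no other element. You simply spell out in more detail the collinearity argument and the triangularity bookkeeping that the paper leaves implicit.
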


\begin{figure}
\begin{center}
\FourNodeForests
\end{center}
\caption{For all $r > \frac{1}{4}$, the kernel $\ker i_{4, r}^*$ contains the span of these twelve elements, which are linearly independent because the first term of each element is not a term of any other element.}\label{forests}
\end{figure}

\begin{proof}
Let $G$ be any $3$--edge ordered forest on $4$ vertices.  Then for $r > \frac{1}{4}$ the composition
\[\alpha_G \circ i_{4, r} \co \Conf_{4, r} \hookrightarrow \Conf_4 \rightarrow (S^1)^3\]
has image in $(S^1)^3 \setminus \{\theta_1 = \theta_2 = \theta_3\}$, so it factors through the inclusion
\[i \co (S^1)^3 \setminus \{\theta_1 = \theta_2 = \theta_3\} \hookrightarrow (S^1)^3\]
from Sublemma~\ref{delete-diagonal}.  Thus, when we pull back the elements $d\theta_1 \wedge d\theta_2 \wedge d\theta_3$ and $d\theta_2 \wedge d\theta_3 - d\theta_1 \wedge d\theta_3 + d\theta_1 \wedge d\theta_2$ to $H^*(\Conf_{4, r})$, the result must be zero.

Thus, the desired twelve elements are indeed in $\ker i_{4, r}^*$.  Their linear independence is a consequence of the linear independence of ordered forests (Theorem~\ref{Arnold}) and the fact that when they are written out as in Figure~\ref{forests}, the first term of each element is not a term of any other element.
\end{proof}

\begin{lemma}\label{intermediate}
For all $r \leq \frac{1}{3}$, the kernel $\ker i_{4, r}^*$ does not contain anything outside the span of the twelve elements from Lemma~\ref{first-ker}.
\end{lemma}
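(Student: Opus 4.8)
The plan is to compute the rank of $i_{4,r}^*$ exactly. First, note that $\ker i_{4,r}^*$ is nondecreasing in $r$: if $r < r'$ then $\Conf_{4,r'} \subseteq \Conf_{4,r}$ and $i_{4,r'}$ factors through $i_{4,r}$, so $\ker i_{4,r}^* \subseteq \ker i_{4,r'}^*$. By Lemma~\ref{first} the kernel is zero for $r \le \frac14$, so it suffices to prove the statement for $r = \frac13$; the values $r \in (\frac14, \frac13)$ then follow by monotonicity. Since $i_{4,r}^*$ is a graded ring homomorphism, $\ker i_{4,r}^*$ is a graded ideal, so I treat each degree separately. In degree $0$ the map is injective, and in degree $3$ the twelve-element span from Lemma~\ref{first-ker} already contains all of $H^3(\Conf_4)$ (namely its six basis elements, the $3$-edge ordered forests), so there is nothing to prove there. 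Hence the whole content is the two claims: $i_{4,1/3}^*$ is injective on $H^1(\Conf_4)$ (free of rank $6$), and its rank on $H^2(\Conf_4)$ (free of rank $11$) is exactly $5$. Since Lemma~\ref{first-ker} already exhibits a rank-$6$ subspace of $\ker i_{4,1/3}^* \cap H^2$, proving rank $\ge 5$ on $H^2$ will force equality and identify the degree-$2$ kernel as precisely that subspace, which is what the lemma asserts.

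Both remaining claims I would prove by pairing ordered-forest cohomology classes against homology classes realized by maps of tori into $\Conf_{4,1/3}$, exactly as in Proposition~\ref{spin-independently-prop} and Lemma~\ref{upper}. For injectivity on $H^1$, I need six loops $\gamma_1, \dots, \gamma_6$ in $\Conf_{4,1/3}$ whose vectors of winding numbers of the six pairwise directions lie in general position in $\mathbb{Z}^6$; then the pairing matrix of the $\gamma_k$ against the six single-edge generators $d\theta^{\wedge(i \to j)}$ is nonsingular, so no nonzero combination of these generators restricts to zero. For rank $\ge 5$ on $H^2$, I need five maps $T^2 \to \Conf_{4,1/3}$ whose degrees against five $2$-edge ordered forests — chosen to span a complement of the known $6$-dimensional degree-$2$ kernel — form a nonsingular $5 \times 5$ matrix; the natural candidates are maps that independently rotate two structures in the unit disk (a pair of disks, or one disk about another), pulling back a product class $d\theta^{\wedge G}$ for a matching or a $2$-edge star $G$.

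The hard part, and the heart of the argument, is that at $r = \frac13$ the four disks are extremely cramped: one cannot fit a sub-disk of radius $2r = \frac23$ — which is what is needed to spin a pair of disks freely — together with even one more disk of radius $\frac13$ inside the unit disk, so none of the homology-class constructions used in the earlier lemmas fit here. The work is therefore to build the required loops and $2$-tori using only the small amount of room actually available at $r = \frac13$: for instance by rigidly rotating a ring of four mutually near-tangent disks, by pinning some disks against the boundary while others execute small loops in the remaining gaps, or by sliding-block-style motions that permute the disks; and then to verify that the resulting winding-number and degree matrices have the claimed ranks $6$ and $5$. Alternatively, one can try to defuse the geometry by first checking that $\Conf_{4,r}$ has no balanced configuration for $r$ in the half-open interval $(\frac14, \frac13]$; by Theorem~\ref{bbk} this would make $\Conf_{4,1/3}$ a deformation retract of $\Conf_{4,r}$ for $r$ slightly above $\frac14$, where there is a little more room to exhibit the twelve homology classes. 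Either way, producing enough homology classes that genuinely live in the tightly constrained space $\Conf_{4,1/3}$ is the main obstacle.
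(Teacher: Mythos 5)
Your framework coincides with the paper's (reduce to $r=\tfrac13$ by monotonicity of the kernel, then pair ordered forests against homology classes carried by maps of circles and tori into $\Conf_{4,\frac13}$, working on a complement of the known degree-$2$ kernel such as the five $2$-edge forests containing $1\to2$), but the proposal stops exactly where the proof lives: you never construct the required loops and $2$-tori, and you yourself flag their construction as ``the main obstacle.'' Two ideas are missing. In degree $1$, what is needed is, for each pair $a,b$, a loop $h_{a\to b}\co S^1\to\Conf_{4,\frac13}$ in which disks $a$ and $b$ turn once about each other while $c$ and $d$ stay fixed; these six loops pair with the six $1$-edge forests as an identity matrix. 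Your suggested substitutes do not do this job: rigidly rotating a ring of four tangent disks produces the single pairing vector $(1,\ldots,1)$ (rank $1$), and small loops in leftover gaps that encircle no other center pair to zero with everything. So one must actually verify that the ``two disks orbit each other, two stay put'' motion exists at the tight radius $\tfrac13$; this is genuinely delicate, since a rigid orbit sweeps out a disk of radius $\tfrac23$, which leaves room for at most one further disk of radius $\tfrac13$, so the orbiting pair's midpoint must move during the loop. In degree $2$, the decisive idea you do not supply is the one that defuses the cramped geometry: the paper does not spin two structures independently (which, as you correctly suspect, is impossible at $r=\tfrac13$), but instead takes the torus generated by $h_{a\to b}$ together with a \emph{global rotation} of the whole configuration, $\widehat h_{a\to b}(\theta_1,\theta_2)$ being $h_{a\to b}(\theta_2-\theta_1)$ rotated by $\theta_1$. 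Its pairing with a $2$-edge forest $G$ is $0$ unless $a\to b$ is an edge of $G$, and $\pm1$ when it is, which gives the rank $5$ you need on the chosen complement. Without these constructions the argument is a plan, not a proof.

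Your fallback route via Theorem~\ref{bbk} also does not go through as stated: that theorem requires the closed interval to contain no balanced configurations, but at $r=\tfrac13$ there is one (three disks of radius $\tfrac13$ along a diameter, with the fourth disk placed disjointly, e.g.\ centered at $(0,\tfrac23)$, carrying no edges of the stress graph), and nothing in the paper excludes balanced configurations in $\bigl(\tfrac14,\tfrac13\bigr)$ either, since Theorem~\ref{two-thirds} only controls radii up to $\tfrac{3}{2n+3}=\tfrac{3}{11}$ when $n=4$. So the retraction approach would itself require a new argument, both on the open interval and at the endpoint where the pairing classes are claimed to live.
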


\begin{proof}
In degree $3$ there is nothing to show, because the six $3$--edge forests from Lemma~\ref{first-ker} span $H^3(\Conf_4)$.  In degree $0$ there is also nothing to show.  To address degrees $1$ and $2$ we observe that it is possible to turn two disks around each other while leaving the other two fixed, as shown in Figure~\ref{ab-swap}.  That is, for every pair of indices $a$ and $b$, we have a map
\[h_{a \rightarrow b} \co S^1 \rightarrow \Conf_{4, \frac{1}{3}},\]
defined so that the composition with the angle map $\alpha_{a \rightarrow b}$ is the identity on $S^1$.  The corresponding homology classes $h_{a \rightarrow b, *}[S^1]$ in $H_1(\Conf_4)$ form a dual basis to the collection of $1$--edge forests in $H^1(\Conf_4)$.  Thus for $r \leq \frac{1}{3}$ we see that $i_{4, r}^*$ must be injective on $H^1(\Conf_n)$.

\begin{figure}
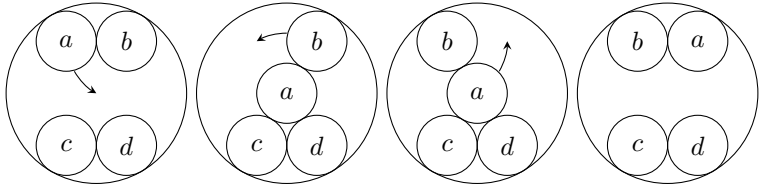

\begin{center}
\FourDiskSwap
\end{center}
\caption{The configuration $h_{a\rightarrow b}(\theta)$ places disks $a$ and $b$ so that the vector from $a$ to $b$ has angle $\theta$, and disks $c$ and $d$ remain fixed.  The map $\widehat{h}_{a \rightarrow b}$ has a second input angle that spins the entire configuration.}\label{ab-swap}
\end{figure}

To see what happens in degree $2$, we use the maps $h_{a \rightarrow b}$ to construct the maps
\[\widehat{h}_{a \rightarrow b} \co (S^1)^2 \rightarrow \Conf_{4, \frac{1}{3}}\]
as follows.  We define $\widehat{h}_{a \rightarrow b}(\theta_1, \theta_2)$ to be the counter-clockwise rotation of the configuration $h_{a \rightarrow b}(\theta_2 - \theta_1)$ by the angle $\theta_1$, so that the vector from disk $a$ to disk $b$ points in the direction $\theta_2$.  To pair a homology class $\widehat{h}_{a \rightarrow b, *}[(S^1)^2] \in H_2(\Conf_4)$ with a $2$--edge ordered forest $G$, we find the degree of the map $\alpha_G \circ \widehat{h}_{a \rightarrow b} \co (S^1)^2 \rightarrow (S^1)^2$.  If $a \rightarrow b$ is not an edge of $G$, then the degree is $0$; if $a \rightarrow b$ is an edge of $G$, then the degree is $1$ or $-1$ according to whether $a \rightarrow b$ is the second edge of $G$ or the first edge.

To show that $\ker i_{4, r}^*$ in degree $2$ contains only the span of the six degree--$2$ elements from Lemma~\ref{first-ker}, we show that $i_{4, r}^*$ is injective on a complementary subspace to that span.  The complementary subspace we choose is the span of the five $2$--edge forests that have $1 \rightarrow 2$ as an edge; it is complementary because each of the remaining six $2$--edge forests is a summand of exactly one of the six degree--$2$ elements from Lemma~\ref{first-ker}.  A dual basis in homology to these five generators is given by the homology classes corresponding to $\widehat{h}_{a \rightarrow b}$ where $a \rightarrow b$ ranges over the five pairs other than $1 \rightarrow 2$.  Thus $i_{4, r}^*$ must be injective on the span of these five ordered forests.
\end{proof}

\begin{lemma}
For all $r > \frac{1}{3}$, the kernel $\ker i_{4, r}^*$ contains all of $H^2(\Conf_4)$ and the subspace of $H^1(\Conf_4)$ for which the coefficients of the generators sum to zero.
\end{lemma}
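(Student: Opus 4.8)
The plan is to isolate one geometric fact that holds once $r > \frac{1}{3}$: no three of the four disks can have collinear centers. If disks $i, j, k$ had collinear centers, the middle disk would force a gap of at least $2r$ on each side, so the two extreme centers would be at distance at least $4r$; but any two centers of disjoint radius-$r$ disks inside $D^2$ lie within $2(1-r)$ of each other, so $4r \leq 2(1-r)$, i.e.\ $r \leq \frac{1}{3}$. I will feed this fact into all the two-edge angle maps.

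First I would treat the \emph{connected} two-edge ordered forests, namely those whose two edges lie on a common triple of vertices --- a path $a \to b \to c$, or a star $a \to b$, $a \to c$ out of the smallest vertex, with the fourth vertex isolated. For such a forest $G$, equality of the two directions recorded by $\alpha_G$ would make the centers $a, b, c$ collinear, which is impossible; hence $\alpha_G \circ i_{4, r} \co \Conf_{4, r} \to (S^1)^2$ misses the diagonal $\{\theta_1 = \theta_2\}$. The complement $(S^1)^2 \setminus \{\theta_1 = \theta_2\}$ deformation retracts, via the coordinate change $(\theta_1, \theta_2) \mapsto (\theta_1, \theta_2 - \theta_1)$, onto a circle; in particular its $H^2$ vanishes and the class $d\theta_2 - d\theta_1$ restricts to zero on it. Pulling back along $\alpha_G \circ i_{4, r}$ then yields simultaneously $i_{4, r}^* d\theta^{\wedge G} = 0$ and $i_{4, r}^*(d\theta^{\wedge e} - d\theta^{\wedge e'}) = 0$, where $e$ and $e'$ are the two edges of $G$ ordered by terminal endpoint.

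Next I would chain together these edge relations. Running over the triples $\{1, 2, 3\}$, $\{1, 2, 4\}$, $\{1, 3, 4\}$, $\{2, 3, 4\}$ and using both the path and the star forest on each, the relations from the previous step force all six restriction classes $i_{4, r}^* d\theta^{\wedge e}$, as $e$ ranges over the six $1$--edge ordered forests, to coincide with a single class $\omega \in H^1(\Conf_{4, r})$. Consequently every difference $d\theta^{\wedge e} - d\theta^{\wedge e'}$ lies in $\ker i_{4, r}^*$, and these differences span precisely the subspace of $H^1(\Conf_4)$ on which the coefficients of the generators sum to zero, proving that part of the lemma. The only $2$--edge ordered forests not yet handled are the three matchings $\{a \to b, c \to d\}$; for such a forest $d\theta^{\wedge G} = d\theta^{\wedge(a \to b)} \smile d\theta^{\wedge(c \to d)}$, so $i_{4, r}^* d\theta^{\wedge G} = \omega \smile \omega$. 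But $\omega$ is pulled back from $S^1$ along an angle map and $d\theta \smile d\theta = 0$ in $H^2(S^1) = 0$, so $\omega \smile \omega = 0$. Hence all eleven $2$--edge forests restrict to zero, giving $H^2(\Conf_4) \subseteq \ker i_{4, r}^*$.

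The step I expect to be the real content is the matching case: a priori $d\theta^{\wedge\{a \to b, c \to d\}}$ is a free basis element of $H^2(\Conf_4)$, so there is no formal reason for it to die, and the whole point is that restriction to $\Conf_{4, r}$ collapses all six edge classes onto the same $\omega$, after which the matching class becomes $\omega^2$ and vanishes for degree reasons. Everything else is either the elementary packing inequality of the first paragraph or routine bookkeeping with the edge-ordering convention.
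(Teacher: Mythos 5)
Your proposal is correct, and its core is the same as the paper's: the packing inequality $4r \le 2(1-r)$ rules out three collinear centers for $r > \frac{1}{3}$, so for any $2$--edge ordered forest $G$ whose edges share a vertex the map $\alpha_G \circ i_{4,r}$ misses the diagonal of $(S^1)^2$; restricting to the complement of the diagonal kills both $d\theta_1 \wedge d\theta_2$ and $d\theta_1 - d\theta_2$, which places all eight shared-vertex $2$--edge forests and all the edge differences (hence the sum-zero subspace of $H^1(\Conf_4)$, by connectivity of the vertex-sharing relation among the six edges) in $\ker i_{4,r}^*$. Where you diverge is in the three matching forests $\{a \to b,\ c \to d\}$: the paper disposes of them by observing that they lie in the span of the eight shared-vertex forests together with the six degree--$2$ kernel elements already produced for $r > \frac{1}{4}$ in Lemma~\ref{first-ker} (the signed sums of $2$--edge subforests of the $3$--edge forests), whereas you use the multiplicative structure: the matching class is the cup product of two $1$--edge classes, your chaining shows every $1$--edge class restricts to one and the same $\omega \in H^1(\Conf_{4,r})$, and $\omega$ is pulled back from $S^1$, so $\omega \smile \omega = 0$. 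Both arguments are valid; yours is self-contained in that it never invokes the $r > \frac{1}{4}$ computation (Sublemma~\ref{delete-diagonal} and Lemma~\ref{first-ker}) for the degree--$2$ statement, at the cost of needing the full identification of all six restricted edge classes, while the paper's route only needs the span of classes it has already exhibited and so is shorter given what precedes it.
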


\begin{proof}
First we make an observation similar to Sublemma~\ref{delete-diagonal}.  If $G$ is a $2$--edge ordered forest for which the two edges share a vertex, then for $r > \frac{1}{3}$ the image of the angle map $\alpha_G$ lies in $(S^1)^2 \setminus \{\theta_1 = \theta_2\}$.  If the inclusion of this subspace into $(S^1)^2$ is denoted by $i$, then $\ker i^*$ is generated by $d\theta_1 \wedge d\theta_2$ and $d\theta_1 - d\theta_2$, so the pullbacks of those classes by $\alpha_G$ are in $\ker i_{4, r}^*$.

Thus, in degree $2$ the kernel $\ker i_{4, r}^*$ contains every $2$--edge ordered forest for which the two edges share a vertex.  The span of these and the six degree--$2$ generators from Lemma~\ref{first-ker} contains the remaining three $2$--edge forests, so it contains all of $H^2(\Conf_4)$.  In degree $1$, the span of the classes $\alpha_G^*(d\theta_1 - d\theta_2)$, where $G$ ranges over the $2$--edge forests for which the two edges share a vertex, is indeed the subspace for which the sum of coefficients is zero.
\end{proof}

\begin{lemma}\label{last}
For all $r \leq \frac{1}{1 + \sqrt{2}}$, the kernel $\ker i_{4, r}^*$ does not contain any class in $H^1(\Conf_4)$ for which the sum of coefficients of the generators is nonzero.  If $r > \frac{1}{1 + \sqrt{2}}$, then $\Conf_{4, r}$ is empty, so $\ker i_{4, r}^*$ contains all of $H^*(\Conf_4)$.
\end{lemma}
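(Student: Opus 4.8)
The plan is to handle the two halves of Lemma~\ref{last} separately, since the second half is immediate and the first half is the substantive one.

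For the second half, I would check directly that when $r > \frac{1}{1+\sqrt{2}}$ the space $\Conf_{4, r}$ is empty. The extremal packing of four equal disks in the unit disk has the four small disks arranged symmetrically, each tangent to two neighbors and to the boundary; a short trigonometric computation (the small-disk centers form a square of half-diagonal $1 - r$, and adjacent centers are at distance $2r$, so $\sqrt{2}(1-r) = 2r$) gives $r = \frac{1}{1+\sqrt{2}}$ as the largest radius for which four disks fit. For larger $r$ the configuration space is empty, so every map out of it is null and $\ker i_{4,r}^* = H^*(\Conf_4)$ trivially.

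For the first half, the strategy mirrors the degree-$1$ argument in Lemma~\ref{intermediate}: I want to show $i_{4,r}^*$ is injective on all of $H^1(\Conf_4)$ for $r \leq \frac{1}{1+\sqrt{2}}$, not merely on the codimension-one subspace where coefficients sum to zero. Since the six generators $d\theta^{\wedge(a\to b)}$ span $H^1(\Conf_4)$ and a class with nonzero coefficient sum, together with the sum-zero subspace (already known to be in the kernel for $r > \frac13$, but here we only need injectivity statements), generates everything, it suffices to produce a single homology class in $H_1(\Conf_{4,r})$ whose pairings with the six generators are not all equal — equivalently, a loop in $\Conf_{4,r}$ on which the six angle maps $\alpha_{a\to b}$ do not all have the same winding number. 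The natural candidate is the loop that rigidly rotates the entire extremal (or near-extremal) four-disk configuration by a full turn: under this loop every pairwise vector rotates once, so all six windings equal $1$, and the pairing with any sum-zero class is zero while the pairing with, say, $d\theta^{\wedge(1\to2)}$ is $1$. Combined with five further classes — the rotations $\widehat{h}_{a\to b}$ of configurations realizable at radius $\frac{1}{1+\sqrt{2}} \geq \frac13$, which exist by Lemma~\ref{intermediate}'s construction — I get six homology classes in $H_1(\Conf_{4,r})$ whose pairing matrix against the six ordered-forest generators is triangular with unit diagonal, hence invertible. This forces $i_{4,r}^*$ to be injective on $H^1(\Conf_4)$, so nothing with nonzero coefficient sum can lie in $\ker i_{4,r}^*$.

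The main obstacle is verifying that the ``rigid rotation of the whole configuration'' loop actually lives in $\Conf_{4,r}$ for every $r$ up to $\frac{1}{1+\sqrt{2}}$ — that is, that there is a four-disk configuration of radius $r$ that can be rotated as a rigid body inside the unit disk, which requires the configuration to have all four centers within distance $1 - r$ of the origin and pairwise separations at least $2r$. For $r$ close to $\frac{1}{1+\sqrt{2}}$ the extremal square configuration is essentially the only one, and it is centered and rotation-invariant, so the rigid-rotation loop is literally constant in shape; for smaller $r$ one can shrink toward the center, so this is fine, but it needs to be stated carefully. A secondary point to pin down is that the five classes from $\widehat{h}_{a\to b}$ are realizable at radius exactly $\frac{1}{1+\sqrt{2}}$ rather than only at $\frac13$; since $\frac{1}{1+\sqrt{2}} > \frac13$ this does not follow for free from Lemma~\ref{intermediate}, and I would instead re-examine the construction of $h_{a\to b}$ to confirm it can be carried out — turning two disks of radius $r$ around each other inside a medium-sized disk while two others stay fixed — whenever four disks of radius $r$ fit at all, which holds precisely on $r \leq \frac{1}{1+\sqrt{2}}$.
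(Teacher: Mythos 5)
Your first half goes wrong at the point where you decide to prove that $i_{4,r}^*$ is injective on all of $H^1(\Conf_4)$ for every $r \leq \frac{1}{1+\sqrt{2}}$. That statement is false for $\frac{1}{3} < r \leq \frac{1}{1+\sqrt{2}}$: by the preceding lemma of the paper, the sum-zero subspace of $H^1(\Conf_4)$ already lies in $\ker i_{4,r}^*$ once $r > \frac{1}{3}$ (you even acknowledge this parenthetically), so no invertible pairing matrix against the six generators can exist in that range. Concretely, the five extra classes you want do not exist there: turning disks $a$ and $b$ around each other while keeping the other two fixed forces the fixed disks to avoid the entire disk of radius $2r$ swept out by the moving pair, so one needs a disk of radius $2r$ and two disks of radius $r$, pairwise disjoint, inside the unit disk; already placing the medium disk next to one fixed disk requires $6r \leq 2$, i.e.\ $r \leq \frac{1}{3}$. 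So your claim that the $h_{a\to b}$ loops are realizable ``whenever four disks of radius $r$ fit at all'' fails, and the whole dual-basis scheme collapses. The fix is that you do not need it: the rigid-rotation loop alone, which you already constructed, suffices. Pairing $i_{4,r}^*\omega$ with that loop's homology class gives exactly the sum of the coefficients of $\omega$ (each of the six pairwise direction vectors winds once), so any class with nonzero coefficient sum survives restriction. This single-pairing argument is precisely the paper's proof of the first half of Lemma~\ref{last}.

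For the second half, your computation of the radius of the square configuration is fine, but you assert without proof that the symmetric square packing is the densest way to place four equal disks in the unit disk; that optimality is exactly what needs an argument. The paper gives a short self-contained one: if $\vec{x} \in \Conf_{4,r}$ then all four centers lie in the disk of radius $1-r$, and any two centers subtending a non-obtuse angle at the origin would be at distance less than $2r$ when $r > \frac{1}{1+\sqrt{2}}$; since four points cannot have every consecutive pair subtend an obtuse angle (the angles would sum to more than $2\pi$), $\Conf_{4,r}$ is empty. You should either supply such an argument or cite the optimal packing result explicitly rather than treating it as a trigonometric identity.
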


\begin{proof}
The radius $r = \frac{1}{1 + \sqrt{2}}$ corresponds to the balanced configuration for which the four disks are arranged in a square and are tangent to each other and to the boundary circle.  There is a map
\[h \co S^1 \rightarrow \Conf_{4, \frac{1}{1 + \sqrt{2}}}\]
given by spinning the balanced configuration, and the pairing of the corresponding homology class $h_*[S^1]$ with any element of $H^1(\Conf_4)$ is the sum of coefficients of the generators.  Thus if $r < \frac{1}{1 + \sqrt{2}}$, then any degree--$1$ class with a nonzero sum of coefficients cannot be in $\ker i_{4, r}^*$.

We check that $\Conf_{4, r}$ is empty when $r > \frac{1}{1+\sqrt{2}}$.  Suppose for contradiction that $\vec{x} \in \Conf_{4, r}$.  Then the centers $x_1$, $x_2$, $x_3$, and $x_4$ are in the disk of radius $1-r$.  For each pair of indices $i$ and $j$, the angle formed by the segments from the origin to $x_i$ and $x_j$ must be obtuse, otherwise the distance from $x_i$ to $x_j$ is less than $2r$.  But it is impossible for every consecutive pair of centers to form an obtuse angle.
\end{proof}

These Lemmas~\ref{first} through~\ref{last} compute $\ker i_{4, r}^*$ for all $r$.  For $r \in [0, \frac{1}{4}]$ the kernel is $0$.  For $r \in (\frac{1}{4}, \frac{1}{3}]$ the kernel is all $6$ dimensions of $H^3(\Conf_4)$ and $6$ of the $11$ dimensions of $H^2(\Conf_4)$.  For $r \in (\frac{1}{3}, \frac{1}{1 + \sqrt{2}}]$ the kernel is all $6$ dimensions of $H^3(\Conf_4)$, all $11$ dimensions of $H^2(\Conf_4)$, and $5$ of the $6$ dimensions of $H^1(\Conf_4)$.  For $r \in (\frac{1}{1 + \sqrt{2}}, \infty)$ the kernel is all $6$ dimensions of $H^3(\Conf_4)$, all $11$ dimensions of $H^2(\Conf_4)$, all $6$ dimensions of $H^1(\Conf_4)$, and all $1$ dimension of $H^0(\Conf_4)$.

\begin{question}
Is there a systematic way to compute $\ker i_{n, r}^*$ for all $r$ for the next few values of $n$, like $5$, $6$, and $7$?
\end{question}

\section{Small disk radii}\label{small}

For large $n$ we do not expect to be able to compute $\ker i_{n, r}^*$ precisely for all $r$.  In particular, when $r$ is large compared to $n$, describing $\Conf_{n, r}$ becomes a circle-packing problem.  In this section we describe what happens when $r$ is small.  First we prove Theorem~\ref{stress-graph-length}, which says that no changes in topology occur when $r < \frac{1}{n}$.  Then we show how when $r$ is larger than $\frac{1}{n}$ but still fairly small compared to $n$, the techniques from the $n = 4$ computation can carry over.  We show that the first several balanced configurations consist of disks lined up along a diameter.  Then we construct homology classes that can be used to prove upper bounds on the size of $\ker i_{n, r}^*$.

For convenience, we include the statement of Theorem~\ref{stress-graph-length} again.  It applies to the unit ball $B^d$ in all dimensions $d$.

\begin{reptheorem}{stress-graph-length}
The least $r$ for which $\Conf_{n, r}(B^d)$ has a balanced configuration is $r = \frac{1}{n}$.
\end{reptheorem}

The balanced configuration in $\Conf_{n, \frac{1}{n}}(B^d)$ has all $n$ balls lined up along a diameter.  To show that no smaller radius has a balanced configuration, we use the following two easy lemmas.

\begin{lemma}\label{tree-length}
Let $T$ be a tree embedded in $\mathbb{R}^d$ with straight edges.  If the total length of $T$ is $L$, then there is a closed ball containing $T$ with radius at most $\frac{L}{2}$.
\end{lemma}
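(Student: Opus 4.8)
The plan is to pick a suitable center point on the tree and bound the distance from that point to every other point of the tree by $\frac{L}{2}$. The natural candidate is a point that "balances" the tree in the sense of graph-theoretic eccentricity: I would consider the point $c$ on $T$ (possibly in the interior of an edge) that minimizes the maximum distance-along-the-tree to all other points of $T$, where distance is measured as path length inside $T$. Call this tree-distance $d_T$. Since $T$ is a tree, between any two points there is a unique path, and the straight-line Euclidean distance is at most the tree-distance: $|x - y| \leq d_T(x, y)$ for all $x, y \in T$. So it suffices to show that $\max_{x \in T} d_T(c, x) \leq \frac{L}{2}$.

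First I would establish this eccentricity bound. Let $a$ and $b$ be two points of $T$ realizing the diameter in the tree metric, i.e.\ $d_T(a, b)$ is maximal; this is the length of the longest path (a "geodesic diameter" of the tree). Let $c$ be the midpoint of the path from $a$ to $b$, so $d_T(a, c) = d_T(c, b) = \frac{1}{2} d_T(a, b)$. The key step is to check that $c$ has eccentricity exactly $\frac{1}{2} d_T(a, b)$: for any point $x \in T$, I claim $d_T(c, x) \leq \frac{1}{2} d_T(a, b)$. Indeed, the path from $x$ to $c$ continues to either $a$ or $b$ (whichever side $c$ is being approached from along the $a$–$b$ path), so $d_T(x, a) = d_T(x, c) + d_T(c, a)$ or $d_T(x, b) = d_T(x, c) + d_T(c, b)$ — more carefully, the path from $x$ meets the $a$–$b$ path at some point $p$, and then $d_T(x, a) = d_T(x, p) + d_T(p, a)$, $d_T(x, b) = d_T(x, p) + d_T(p, b)$. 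If $d_T(x, c)$ exceeded $\frac{1}{2} d_T(a, b)$, then since $p$ lies on the $a$–$b$ path one of $d_T(x, a)$ or $d_T(x, b)$ would exceed $d_T(a, b)$, contradicting maximality. This gives $d_T(c, x) \leq \frac{1}{2} d_T(a, b) \leq \frac{1}{2} L$, since the longest path in $T$ has length at most the total length $L$.

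Then the closed Euclidean ball of radius $\frac{L}{2}$ centered at the point $c \in \mathbb{R}^d$ contains all of $T$, because $|c - x| \leq d_T(c, x) \leq \frac{L}{2}$ for every $x \in T$. I expect the main obstacle to be purely expository: carefully justifying the eccentricity claim about the midpoint $c$, in particular handling the case analysis of where an arbitrary point $x$ attaches to the $a$–$b$ geodesic, and making sure one allows $a$, $b$, $c$ to lie in the interiors of edges rather than only at vertices. None of this is deep — it is the standard fact that in a tree the center lies at the midpoint of a longest path — but it does require a clean argument rather than a one-line appeal.
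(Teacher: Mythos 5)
Your proposal is correct, but it takes a genuinely different route from the paper. The paper argues by induction on the number of edges: remove a leaf edge $e$, get by induction a ball of radius at most $\frac{L-\abs{e}}{2}$ containing the rest of the tree, enclose $e$ in a ball of radius $\frac{\abs{e}}{2}$, and observe that since the two balls share the vertex where $e$ attaches, both fit inside a single ball of radius $\frac{L-\abs{e}}{2}+\frac{\abs{e}}{2}=\frac{L}{2}$. You instead work with the intrinsic path metric $d_T$ on the tree, take the midpoint $c$ of a longest path, prove the standard eccentricity bound $\max_x d_T(c,x)\leq \frac{1}{2}d_T(a,b)\leq \frac{L}{2}$, and then use $\abs{c-x}\leq d_T(c,x)$ (the Euclidean distance between the endpoints of a piecewise-linear path is at most its length) to conclude. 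Your eccentricity argument, including the case analysis on where the path from $x$ meets the $a$--$b$ geodesic, is sound, and compactness of $T$ ensures the diameter and maxima are attained. What your approach buys is a slightly stronger statement: the ball can be centered at a point of $T$ and its radius is at most half the geodesic diameter of $T$, which can be much smaller than $\frac{L}{2}$ (e.g.\ for a star). What the paper's induction buys is brevity: it needs only the one-line fact that two balls meeting at a point lie in a ball whose radius is the sum of theirs, and it avoids the metric-tree center argument entirely. Either proof suffices for the application in Theorem~\ref{stress-graph-length}.
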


\begin{proof}
We use induction on the number of edges in $T$.  The base case is when $T$ has one vertex and no edges.  Otherwise, we let $v$ be a vertex of $T$ incident to only one edge $e$, of length $\abs{e}$.  By the inductive hypothesis there is a ball of radius at most $\frac{L - \abs{e}}{2}$ containing all of $T$ except $e$.  There is also a ball of radius $\frac{\abs{e}}{2}$ containing $e$, and the two balls intersect at the other vertex of $e$, so there is a ball of radius at most $\frac{L - \abs{e}}{2} + \frac{\abs{e}}{2} = \frac{L}{2}$ containing both balls and therefore containing $T$.
\end{proof}

\begin{lemma}\label{round-hull}
Let $A$ be any collection of points on a sphere $S$ in $\mathbb{R}^d$ of radius $\rho$, not contained in any open hemisphere.  Then every ball containing $A$ has radius at least $\rho$.
\end{lemma}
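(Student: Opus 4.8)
The plan is to normalize so that the sphere $S$ has radius $\rho$ and is centered at the origin, so that $A \subseteq \{x \in \mathbb{R}^d : \abs{x} = \rho\}$. The hypothesis that $A$ lies in no open hemisphere of $S$ translates, in terms of inner products, to the statement that for every unit vector $v \in \mathbb{R}^d$ there is a point $a \in A$ with $\langle a, v \rangle \leq 0$. Given this, I would take an arbitrary ball $B$ of radius $\sigma$ and center $c$ with $A \subseteq B$, so that $\abs{a - c} \leq \sigma$ for every $a \in A$, and show $\sigma \geq \rho$.

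If $c = 0$, then $\sigma \geq \abs{a - c} = \abs{a} = \rho$ for any $a \in A$, and we are done. If $c \neq 0$, I would apply the hemisphere hypothesis to the unit vector $v = c/\abs{c}$ to obtain a point $a \in A$ with $\langle a, c \rangle \leq 0$. Then
\[
\abs{a - c}^2 = \abs{a}^2 - 2\langle a, c \rangle + \abs{c}^2 = \rho^2 - 2\langle a, c \rangle + \abs{c}^2 \geq \rho^2,
\]
since both $-2\langle a, c \rangle$ and $\abs{c}^2$ are nonnegative. Hence $\sigma \geq \abs{a - c} \geq \rho$, which is the claim.

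There is no serious obstacle in this argument. The only things needing a little care are the faithful translation of the phrase ``contained in no open hemisphere'' into the condition that every direction $v$ is witnessed by some $a \in A$ with $\langle a, v \rangle \leq 0$, and the separate (but trivial) treatment of the degenerate center $c = 0$, where no direction $v$ is singled out. It may also be worth remarking, for the intended application, that equality $\sigma = \rho$ forces $c = 0$, so the ball of radius $\rho$ concentric with $S$ is the unique smallest ball containing such an $A$.
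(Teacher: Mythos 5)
Your proof is correct and is essentially the paper's argument made explicit: the paper simply observes that a ball of radius less than $\rho$ meets $S$ in a set that is empty or contained in an open hemisphere, and your inner-product computation with $v = c/\abs{c}$ (showing $\abs{a-c}^2 = \rho^2 - 2\langle a, c\rangle + \abs{c}^2 \geq \rho^2$ for some $a \in A$) is exactly the quantitative content of that claim, phrased directly rather than by contraposition. There is no gap.
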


\begin{proof}
Let $B$ be any ball of radius less than $\rho$.  Then $B \cap S$ either is empty or is contained in an open hemisphere of $S$, so it cannot contain $A$.
\end{proof}

\begin{proof}[Proof of Theorem~\ref{stress-graph-length}]
Suppose for contradiction that $r < \frac{1}{n}$ has a balanced configuration, and assume without loss of generality that the stress graph is connected.  The stress graph has two parts: inside the ball of radius $1-r$, it is a graph on the interior vertices with all edges of length $2r$; outside the ball of radius $1-r$, from every boundary vertex there is one edge of length $r$ extending radially to the corresponding interior vertex.  

Let $B_1$ denote the ball of radius $1-r$.  We apply Lemma~\ref{tree-length} to a spanning tree of the portion of the stress graph inside $B_1$.  There are at most $n$ vertices and every edge has length $2r$, so the total length is at most $2r(n-1)$, and so the lemma implies that there is a ball $B_2$ of radius at most $r(n-1)$ containing the spanning tree.

On the other hand, we know that the boundary vertices do not all lie in an open hemisphere of the unit sphere, because if they did, their mechanical stresses could not add up to zero.  Therefore their neighboring interior vertices do not all lie in an open hemisphere of the sphere $\del B_1$ of radius $1-r$.  We apply Lemma~\ref{round-hull} where $S$ is $\del B_1$ and $A$ is the set of interior vertices on $\del B_1$.  The lemma implies that every ball containing $A$ has radius at least $1-r$, but from above we know that $B_2$ contains $A$ and has radius at most $r(n-1)$.  Because $r < \frac{1}{n}$, we have $r(n-1) < 1-r$, giving a contradiction.
\end{proof}

Next we go back to dimension~$2$ and show that the next few balanced configurations also consist of several disks lined up along a diameter.

\begin{theorem}\label{two-thirds}
Suppose that $n$ disks of radius $r$ form a balanced configuration in the unit disk $D^2$, such that the stress graph is not just a diameter.  Then the radius $r$ satisfies the inequality
\[r > \frac{3}{2n + 3}.\]
\end{theorem}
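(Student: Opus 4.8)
The plan is to reduce to a connected stress graph and then refine the length estimate behind Theorem~\ref{stress-graph-length}, using balance at the interior vertices to get more than connectivity alone provides.

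First I would reduce to a connected stress graph. Each connected component of a balanced stress graph is itself balanced, so it is enough to bound $r$ in terms of the number $m \le n$ of disks in one component, and since $\frac{3}{2m+3} \ge \frac{3}{2n+3}$ that suffices. If some component is itself a diameter, then its $m$ tangent disks have radius $\frac1m$, so $r = \frac1m \ge \frac1n > \frac{3}{2n+3}$ whenever $n \ge 4$ and we are done directly (the cases $n \le 3$ are easily checked by hand). Otherwise every component is a connected, non-diameter balanced stress graph, and I may assume the whole stress graph is of this kind, say with $n$ interior vertices. Let $B_1$ be the disk of radius $1-r$ about the center $O$. As in the proof of Theorem~\ref{stress-graph-length}, every boundary vertex is radial, lying at distance $r$ outside a unique parent interior vertex on $\partial B_1$; and the interior subgraph (edges of length $2r$) is connected, hence has total length at least $2r(n-1)$. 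A short computation shows that $r > \frac{3}{2n+3}$ is equivalent to the interior subgraph having total length exceeding $3(1-r) - 2r$; in fact I expect to prove the cleaner statement that its total length is at least $3(1-r)$, with equality only for the ``perfect tripod'' (which has $r = \frac{3}{2n+1}$), and this already gives the strict inequality.

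The next step is to extract the structure forced by the balance condition. A degree-$1$ interior vertex cannot balance, so every leaf of the stress graph is a boundary vertex; a degree-$2$ interior vertex must be collinear with its two neighbors; and the parent of a boundary vertex, if it has only one interior edge, must have that edge pointing radially. It follows that an interior subgraph with no vertex of degree $\ge 3$ is a single straight chain whose two ends are radial, forcing those two directions to be antipodal, i.e.\ a diameter. Since diameters are excluded there is a \emph{branch vertex} $v^*$ of degree $\ge 3$. Balance at $v^*$ with positive weights writes $v^*$ as a convex combination of its neighbors, so $v^*$ lies in the interior of their convex hull; choosing three neighbors whose triangle contains $v^*$, the corresponding three branches of a spanning tree leave $v^*$ in directions that positively span the plane, and each branch, having only boundary vertices as possible leaves, must reach $\partial B_1$. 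Applied crudely this only recovers $r \ge \frac1n$: the three branches have total length at least $3\bigl((1-r) - |v^*|\bigr)$, while Lemmas~\ref{tree-length} and~\ref{round-hull} (the latter applied to the parents, which fill no open semicircle of $\partial B_1$ by the boundary part of the balance condition) give the old estimate. The improvement to a factor of $3$ has to come from using that $v^*$ is genuinely \emph{surrounded}.

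The heart of the argument, and the step I expect to be the main obstacle, is to convert ``the branches leave $v^*$ in positively spanning directions and each reaches $\partial B_1$'' into the quantitative statement that the interior subgraph has length at least $3(1-r)$. I would prove this by induction on the number of branch vertices, bookkeeping the length contributed by each. A straight branch ending at a radial end must lie on a radial line through $O$, which is very restrictive; when a branch instead runs into a further branch vertex, that vertex's own surrounding property lets the induction continue, and the crux is to show that the detour forced whenever a branch cannot simply run radially outward always repays itself in the length budget. The delicate case is a branch vertex lying far from $O$ (near $\partial B_1$), where its three branches can each be short; here one must use that the remaining branches are then forced to loop back toward $O$, and that the boundary stresses of the whole component still have to cancel, which is what manufactures the missing length. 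Throughout, Lemma~\ref{round-hull} (no ball smaller than $B_1$ contains a set of boundary points filling no open semicircle) and Lemma~\ref{tree-length} (a tree of length $L$ lies in a ball of radius $\tfrac{L}{2}$) remain the geometric inputs, now used branch by branch rather than once globally.
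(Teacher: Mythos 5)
Your reduction steps and structural observations are sound (components of a balanced graph are balanced, leaves of the interior subgraph are parents of boundary vertices and point radially, degree-two interior vertices are collinear with their neighbors, and balance at a branch vertex $v^*$ places it inside the convex hull of its neighbors), but the proof has a genuine gap exactly where you flag it: the quantitative claim that the interior part of the stress graph has total length at least $3(1-r)$ is never established. The plan you outline --- induction on branch vertices with a length ``budget,'' the assertion that any forced detour ``repays itself,'' and the delicate case of a branch vertex near $\del B_1$ where the stray branches are supposedly ``forced to loop back toward $O$'' by cancellation of boundary stresses --- is precisely the content of the theorem, and anchoring the argument at an interior branch vertex $v^*$ gives you no control when $v^*$ sits close to the circle of radius $1-r$: the three directions spreading out at $v^*$ say nothing about how far the branches travel, and connectivity alone only reproduces the $r \geq \frac1n$ bound you already concede is insufficient. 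There is also a smaller arithmetic wrinkle: the interior subgraph need not be a tree, so ``total length of the interior subgraph exceeds $3-5r$'' is not equivalent to $r > \frac{3}{2n+3}$ unless you pass to a subtree or subgraph whose edge count you control, as the paper does by working with a subgraph $H$ on $k \leq n$ interior vertices.

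The paper closes this gap by a different, global mechanism rather than branch-by-branch bookkeeping. It takes the outermost degree-$\geq 3$ vertices $w_i$ on the radii through the boundary points; the outer boundary of the stress graph between consecutive $w_i$ is a concave piecewise-linear curve, and these curves form a cycle (unless all $w_i$ coincide with the origin, which is the easy ``three radii'' case giving $r \geq \frac{3}{2n+1}$). Using the boundary balance condition, it chooses three boundary points $v_{i_1}, v_{i_2}, v_{i_3}$ whose triangle contains the origin, and compares the subgraph $H$ (the cycle plus the three radial stubs, of total length $r(2k+3)$) with the three full radii of total length $3$: straightening the concave arcs shortens the cycle to the triangle $w_{i_1}w_{i_2}w_{i_3}$, and Lemma~\ref{convexity} (perimeter of a triangle strictly exceeds the sum of distances from an interior point to its vertices, applied with $p$ the origin) shows the triangle is strictly longer than the three segments from the origin to the $w_{i_j}$. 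This choice of three rays containing the origin, together with the concavity of the outer boundary, is the concrete substitute for the ``missing length is manufactured by stress cancellation'' step that your proposal leaves unproven; to complete your route you would need a lemma of comparable strength, and at that point you would essentially be reconstructing the paper's argument.
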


In other words, if there is a balanced configuration with radius $r \leq \frac{3}{2n+3}$, then $r = \frac{1}{k}$ for some integer $k \leq n$ and the configuration has $k$ disks lined up along a diameter.

\begin{proof}
Let $v_1, \ldots, v_m$ denote the boundary points of the stress graph.  The edges from $v_1, \ldots, v_m$ extend along the radii containing them; let $w_1, \ldots, w_m$ denote the vertices of the stress graph of degree greater than $2$ that are closest to $v_1, \ldots, v_m$ along their radii.

For each consecutive pair of boundary points $v_i$ and $v_{i+1}$, the outer boundary of the stress graph traces a concave piecewise-linear curve that begins with the segment from $v_i$ to $w_i$, ends with the segment from $w_{i+1}$ to $v_{i+1}$, and stays within the sector determined by $v_i$ and $v_{i+1}$.  Figure~\ref{concave} depicts an example balanced configuration with its stress graph.

\begin{figure}
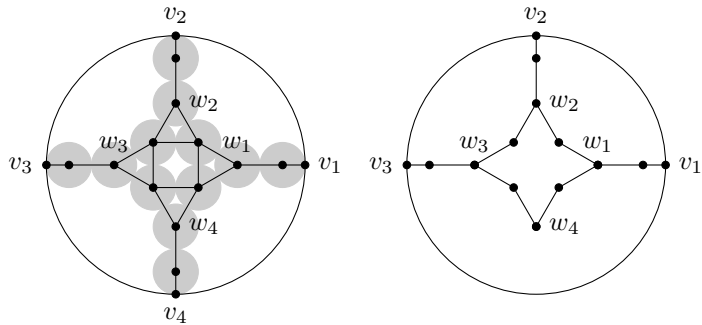

\begin{center}
\StressGraph
\end{center}
\caption{We consider the outer boundary of the stress graph, minus all but three of the radial portions.  This subgraph, called $H$, is shown on the right.  Its length is at least $3$ and is less than $r(2n + 3)$.}\label{concave}
\end{figure}

If $w_i$ happens to be at the origin, then the concavity implies that $w_{i+1}$ must also be at the origin; by the same reasoning all $w_j$ must be at the origin.  In this case the stress graph consists of at least three radii.  We consider the subgraph $H$ containing exactly three of the radii, and suppose that its number of interior vertices is $k$.  The total length of $H$ is $3$ but is also $r(2k + 1)$, so we have
\[r = \frac{3}{2k + 1} \geq \frac{3}{2n + 1} > \frac{3}{2n + 3}.\]
Thus the lemma is proven in the case where some $w_i$ is at the origin.

If all $w_i$ are not at the origin, then the concave curves from each $w_i$ to the next $w_{i+1}$ form a cycle.  We consider the subgraph $H$ containing this cycle plus three radial segments, as follows.  The boundary points $v_1, \ldots, v_m$ are not contained in an open hemisphere (that is, semicircle), so we can choose three of them, denoted $v_{i_1}$, $v_{i_2}$, and $v_{i_3}$, such that the origin is in the triangle they determine, possibly on the boundary.  Then $H$ contains the cycle from above as well as the segments from $v_{i_1}$ to $w_{i_1}$, from $v_{i_2}$ to $w_{i_2}$, and from $v_{i_3}$ to $w_{i_3}$.  Figure~\ref{concave} includes a picture of $H$.

If $k$ is the number of interior vertices of $H$, then the total length of $H$ is $r(2k + 3)$.  To finish the proof of the lemma, we show that the total length of $H$ is also greater than $3$, giving the desired inequality
\[r > \frac{3}{2k + 3} \geq \frac{3}{2n + 3}.\]
First we replace $H$ by a graph $H_1$ of shorter total length, and then we replace $H_1$ by the graph $H_2$ that contains exactly the three radii at $v_{i_1}$, $v_{i_2}$, and $v_{i_3}$.  We show that $H_2$ has shorter total length than $H_1$, so $H$ has total length greater than $3$.

To construct $H_1$, we replace the concave cycle between the points $w_i$ by the triangle with vertices $w_{i_1}$, $w_{i_2}$, and $w_{i_3}$.  In other words, we view this concave cycle as the disjoint union of curves from $w_{i_1}$ to $w_{i_2}$, from $w_{i_2}$ to $w_{i_3}$, and from $w_{i_3}$ to $w_{i_1}$.  Straightening these curves to produce $H_1$ shortens the total length.

Then to construct $H_2$, we replace the triangle by the segments from the origin to $w_{i_1}$, $w_{i_2}$, and $w_{i_3}$, so that $H_2$ consists of the three radii.  If we view $w_{i_1}$, $w_{i_2}$, and $w_{i_3}$ as points in the normed vector space $\mathbb{R}^2$, then the statement that $H_2$ is shorter than $H_1$ may be rephrased as the inequality
\[\abs{w_{i_1} - w_{i_2}} + \abs{w_{i_2} - w_{i_3}} + \abs{w_{i_3} - w_{i_1}} > \abs{w_{i_1}} + \abs{w_{i_2}} + \abs{w_{i_3}}.\]
This inequality is the content of Lemma~\ref{convexity} which appears next, concluding the proof of Theorem~\ref{two-thirds}.
\end{proof}

\begin{lemma}\label{convexity}
Let $w_1$, $w_2$ and $w_3$ be noncollinear points of $\mathbb{R}^2$, and let $p$ be a point in the triangle with vertices $w_1$, $w_2$, and $w_3$, possibly on the boundary.  Then the distances between the points satisfy the inequality
\[\abs{w_1 - w_2} + \abs{w_2 - w_3} + \abs{w_3 - w_1} > \abs{p - w_1} + \abs{p - w_2} + \abs{p - w_3}.\]
\end{lemma}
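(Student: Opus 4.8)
The plan is to prove the strict inequality
\[
\abs{w_1 - w_2} + \abs{w_2 - w_3} + \abs{w_3 - w_1} > \abs{p - w_1} + \abs{p - w_2} + \abs{p - w_3}
\]
by comparing the perimeter of the big triangle $w_1 w_2 w_3$ with the perimeters of the three smaller triangles into which $p$ subdivides it. Write $a = \abs{w_2 - w_3}$, $b = \abs{w_3 - w_1}$, $c = \abs{w_1 - w_2}$ and $d_i = \abs{p - w_i}$. The three sub-triangles are $p\,w_2 w_3$ (sides $a, d_2, d_3$), $p\,w_3 w_1$ (sides $b, d_3, d_1$), and $p\,w_1 w_2$ (sides $c, d_1, d_2$). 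Adding their perimeters gives $(a + b + c) + 2(d_1 + d_2 + d_3)$. So the claim is equivalent to saying that the sum of the three sub-perimeters exceeds $3(d_1 + d_2 + d_3)$, i.e. that the average sub-perimeter exceeds $d_1 + d_2 + d_3$.

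The cleanest route is the triangle inequality applied to each sub-triangle: in triangle $p\,w_2 w_3$ the side $a$ satisfies $a < d_2 + d_3$ (strictly, because $p, w_2, w_3$ are not collinear when $p$ is interior; I will need to handle the boundary case separately). Similarly $b < d_3 + d_1$ and $c < d_1 + d_2$. Summing these three inequalities gives $a + b + c < 2(d_1 + d_2 + d_3)$, which is the wrong direction and too weak — so a single application of the triangle inequality does not suffice. Instead I would fix $p$ and consider the function $p \mapsto d_1 + d_2 + d_3$; it is convex, so on the closed triangle it attains its maximum at a vertex. If $p = w_1$, say, then the right-hand side is $0 + \abs{w_1 - w_2} + \abs{w_1 - w_3} = b + c$, and we must show $a + b + c > b + c$, i.e. $a > 0$, which holds since the $w_i$ are noncollinear. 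This settles the maximum, but we need the inequality at the given $p$, not at the max — and the max value $b+c$ is exactly the bound we want to beat, so convexity alone gives only $\le$, not $<$, at interior points. The fix: the function $p \mapsto d_1 + d_2 + d_3$ is \emph{strictly} convex on the complement of the three lines $w_iw_j$, hence its maximum over the closed triangle is attained \emph{only} at the vertices, so for any $p$ in the triangle that is not a vertex we already get a strict inequality; and when $p$ is a vertex we computed the strict inequality directly from noncollinearity. I should double-check that $p \mapsto \abs{p - w_i}$ is strictly convex away from lines through $w_i$ in the Euclidean norm — it is, since the Euclidean norm is strictly convex — so the sum is strictly convex off the union of the three lines, and in particular its restriction to any segment joining two vertices is strictly convex, forcing the max over the triangle to live at vertices only.

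The main obstacle is making the strictness rigorous at every point of the closed triangle simultaneously, including the three edges (where $p$ lies on one line $w_iw_j$ but not the others) and the vertices. I would organize it as: (1) observe the perimeter identity above reducing the claim to a statement about $d_1 + d_2 + d_3$; (2) prove $d_1 + d_2 + d_3 < b + c$ when $p \ne w_1$ lies in the triangle, and symmetrically, by noting that the restriction of the strictly convex function $g(p) = d_1 + d_2 + d_3$ to the triangle is maximized at the vertices and $g(w_1) = b+c$, with strict inequality at non-vertices by strict convexity on the edges; (3) at $p = w_1$ conclude $a+b+c > b+c = g(w_1)$ directly from $a = \abs{w_2-w_3} > 0$. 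Taking the minimum over the three vertex values and combining with (2) gives, for \emph{every} $p$ in the closed triangle, $g(p) \le \max(b+c, c+a, a+b)$, and in all cases $a+b+c$ strictly exceeds this maximum (since $a+b+c - \max = \min(a,b,c) > 0$), which is precisely the asserted inequality.
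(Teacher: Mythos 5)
Your final argument is correct and is essentially the paper's proof: convexity of $p \mapsto \abs{p-w_1}+\abs{p-w_2}+\abs{p-w_3}$ forces its maximum over the closed triangle to occur at a vertex, where it equals the sum of the two largest side lengths, and the perimeter strictly exceeds that because the smallest side length is positive by noncollinearity. The detours (the sub-triangle perimeter identity, the abandoned triangle-inequality attempt, and the strict-convexity discussion) are unnecessary, since your closing observation that $a+b+c-\max(a+b,b+c,c+a)=\min(a,b,c)>0$ already supplies the strictness at every point, vertices included.
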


\begin{proof}
Consider the function $p \mapsto \abs{p - w_1}$, for $p \in \mathbb{R}^2$.  This function satisfies the following convexity property: for any two points $p, p' \in \mathbb{R}^2$ and any $t \in [0, 1]$, the convex combination $tp + (1-t)p'$ satisfies the inequality
\[\abs{tp + (1-t)p' - w_1} \leq t\abs{p - w_1} + (1-t)\abs{p' - w_1}.\]

The same convexity property applies to $\abs{p - w_2}$ and $\abs{p - w_3}$, so the sum $\abs{p- w_1} + \abs{p - w_2} + \abs{p - w_3}$, viewed as a function of $p$, has no local maxima when restricted to any line in $\mathbb{R}^2$.  Thus, the maximum value of $\abs{p - w_1} + \abs{p - w_2} + \abs{p - w_3}$ as $p$ ranges over the triangle must occur when $p$ is equal to one of the vertices, so the maximum is the sum of the greatest two of the three distances $\abs{w_1 - w_2}$, $\abs{w_2 - w_3}$, and $\abs{w_3 - w_1}$.
\end{proof}

In the computation of $\ker i_{4, r}^*$ (see Section~\ref{four}), our strategy for showing that various cohomology classes were not in $\ker i_{4, r}^*$ was to exhibit maps into $\Conf_{4, r}$ that give homology classes that detect the given cohomology class.  In the remainder of this section we construct a sequence of maps $q_n \co (S^1)^{n-1} \rightarrow \Conf_{n, \frac{1}{n}}$ that give nice homology classes.  By modifying $q_n$ we can produce many different homology classes in $\Conf_{n, r}$, which can be used to show that various cohomology classes are not in $\ker i_{n, r}^*$.

The construction of $q_n$ is similar to the construction of the segment maps $k_n$ in Section~\ref{segments-independently}.  We construct the map
\[q_n \co (S^1)^{n-1} \rightarrow \Conf_{n, \frac{1}{n}}\]
recursively in $n$.  For $n = 1$ there is nothing to say, because the source and target spaces are each one point.  For $n > 1$, we would like to construct the configuration $q_n(\theta_1, \ldots, \theta_{n-1})$.  Inside the unit disk, we draw a medium-sized disk of radius $\frac{n-1}{n}$ and the $n$th disk of radius $\frac{1}{n}$ such that they are disjoint and tangent, and such that the vector from the center of the medium-sized disk to the center of the $n$th disk has angle $\theta_{n-1}$.  Figure~\ref{favorite-disks} depicts this process.  Then, inside the medium-sized disk, we draw an $\frac{n-1}{n}$--scaled copy of $q_{n-1}(\theta_1, \ldots, \theta_{n-2})$, which has $n-1$ disks of radius $\frac{1}{n-1}$.  The resulting picture is $q_n(\theta_1, \ldots, \theta_{n-1})$.

The map $q_n$ corresponds to a homology class $(i_{n, \frac{1}{n}})_*(q_n)_*[(S^1)^{n-1}]$ on $\Conf_n$ of dimension $n-1$.  To construct more homology classes, we modify $q_n$ by permuting the disks.  For each permutation $\sigma \in S_n$, there is a map
\[\sigma \circ q_n \co (S^1)^{n-1} \rightarrow \Conf_{n, \frac{1}{n}}\]
and a corresponding homology class $(i_{n, \frac{1}{n}})_*\sigma_*(q_n)_*[(S^1)^{n-1}]$.  We denote this homology class by $\sigma \circ q_n$ for short.

\begin{figure}
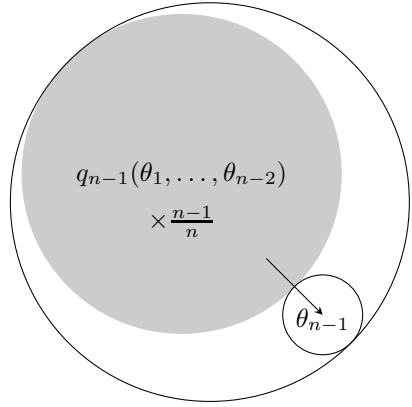

\begin{center}
\FavoriteDisks
\end{center}
\caption{The $n$--disk configuration $q_n(\theta_1, \ldots, \theta_{n-1})$ is constructed by adding an $n$th disk at angle $\theta_{n-1}$ to a scaled copy of the $(n-1)$--disk configuration $q_{n-1}(\theta_1, \ldots, \theta_{n-2})$.}\label{favorite-disks}
\end{figure}

\begin{theorem}\label{dual-space-basis}
A free basis of the dual space to $H^{n-1}(\Conf_n)$ is given by the homology classes $\sigma \circ q_n$, where $\sigma$ ranges over the permutations in $S_n$ that satisfy $\sigma(1) = 1$.
\end{theorem}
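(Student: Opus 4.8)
The plan is to pair each class $\sigma\circ q_n$ against the basis of $H^{n-1}(\Conf_n)$ from Theorem~\ref{Arnold} and show the resulting $(n-1)!\times(n-1)!$ integer matrix has determinant $\pm1$; since $H^{n-1}(\Conf_n)$ is free of rank $(n-1)!$, this is exactly the assertion. The degree-$(n-1)$ basis classes are the $d\theta^{\wedge G}$ with $G$ an ordered forest having $n-1$ edges; such a $G$ is a tree rooted at $1$ in which every vertex $j\geq2$ has a unique parent $p(j)<j$, and there are $(n-1)!$ of these, matching the permutations $\sigma$ with $\sigma(1)=1$. As in the proof of Proposition~\ref{spin-independently-prop}, $\langle \sigma\circ q_n,\, d\theta^{\wedge G}\rangle$ is the degree of the torus map $\alpha_G\circ i_{n,\frac1n}\circ(\sigma\circ q_n)\co(S^1)^{n-1}\to(S^1)^{n-1}$, which equals the determinant of the integer matrix whose entry in row $j$ (the edge $p(j)\to j$) and column $\ell$ is the winding number of $\theta_\ell$ under that component of the map.

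Next I would compute those winding numbers by unwinding the recursion defining $q_n$. Telescoping the scale factors $\tfrac{n-1}{n},\tfrac{n-2}{n-1},\dots$, one finds that in $q_n(\theta_1,\dots,\theta_{n-1})$, identifying $\mathbb{R}^2$ with $\mathbb{C}$, the center of disk $k$ is $\tfrac{k-1}{n}e^{2\pi i\theta_{k-1}}-\tfrac1n\sum_{m>k}e^{2\pi i\theta_{m-1}}$ for $k\geq2$, and $-\tfrac1n\sum_{m\geq2}e^{2\pi i\theta_{m-1}}$ for $k=1$. Hence, for $b<k$, the vector from disk $b$ to disk $k$ is $\tfrac1n$ times $k\,e^{2\pi i\theta_{k-1}}$ plus an integer combination of the other $e^{2\pi i\theta_\ell}$ with coefficients of total absolute value $k-2<k$; as that term dominates, this direction winds exactly once as $\theta_{k-1}$ traverses the circle and has winding number $0$ in every other coordinate, whatever $b$ is. Writing $\pi=\sigma^{-1}$, so disk $j$ of $\sigma\circ q_n$ occupies the slot of disk $\pi(j)$ of $q_n$, the winding matrix then has exactly one nonzero entry, $\pm1$, in each row $j$, in column $\max(\pi(j),\pi(p(j)))-1$. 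Its determinant is $0$ unless $j\mapsto\max(\pi(j),\pi(p(j)))$ is a bijection, and I would check that this happens precisely when $\pi(p(j))<\pi(j)$ for every $j$ — i.e.\ $\pi$ is a linear extension of the poset $P_G$ in which each vertex sits above its parent — since otherwise following a maximal $\pi$-increasing chain up the parent edges of $G$ to the root $1$, where $\pi$ is minimal, gives a contradiction. In the good case the winding matrix is a permutation matrix, so the pairing equals $\sign(\sigma)$; otherwise it is $0$.

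Finally I would show the $(n-1)!\times(n-1)!$ matrix with rows indexed by permutations $\sigma$ fixing $1$, columns by trees $G$, and entry $\sign(\sigma)$ exactly when $\sigma^{-1}$ is a linear extension of $P_G$ (and $0$ otherwise) lies in $GL_{(n-1)!}(\mathbb{Z})$. To each tree $G$ attach the linear extension $\pi_G$ of $P_G$ built greedily: repeatedly give the next rank to the largest vertex whose ancestors are all already ranked. Then $\pi_G$ is the lexicographically largest linear extension of $P_G$, and $G\mapsto\pi_G$ is a bijection onto the permutations fixing $1$, with explicit inverse $p(j)=\pi^{-1}\bigl(\max\{m<\pi(j):\pi^{-1}(m)<j\}\bigr)$, which recovers $p$ from $\pi_G$ and so gives injectivity. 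Ordering the permutations by decreasing lexicographic order of their rank sequences and ordering the trees so that $G\mapsto\pi_G$ respects this order makes the matrix lower triangular with all diagonal entries $\pm1$, so its determinant is $\pm1$; this finishes the proof. The real work is the explicit formula for $q_n$ and, above all, the verification that $G\mapsto\pi_G$ is a bijection — equivalently, that the compatibility matrix really does admit this triangular form.
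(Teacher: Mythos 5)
Your argument is correct, but it takes a genuinely different route from the paper's in both of its halves, so it is worth comparing. For the pairing computation, the paper argues softly: if $\sigma$ reverses some edge of $G$, it locates the largest vertex $k$ on the relevant path in $\sigma(G)$ and observes that under $q_n$ the two incident directions at $k$ never make angle $\pi$, so the torus map misses a point and has degree $0$; if $\sigma$ reverses no edge, each coordinate of the composite stays within an acute angle of the corresponding input angle, so the map is homotopic to the identity and the pairing is $\sign(\sigma)$. You instead solve the recursion for the centers of $q_n$ in closed form and read off winding numbers; your formula for the centers is right, the dominant-term estimate (coefficient $k$ against total error $k-2$) is right, and in fact all nonzero winding entries are $+1$, so in the nondegenerate case the winding matrix is the permutation matrix of $\pi$ on $\{2,\dots,n\}$ and the degree is $\sign(\sigma)$ as you claim; your ``bijectivity forces $\pi(p(j))<\pi(j)$'' step also holds (besides your chain argument, it follows at once from $\max(\pi(j),\pi(p(j)))\geq\pi(j)$ together with the fact that both maps are bijections onto $\{2,\dots,n\}$, so the pointwise inequality must be equality). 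For the linear algebra, the paper runs an induction on $n$, explicitly expressing each dual basis element $(G^{(k)})^*$ as an integer combination of the classes $\sigma^{(l)}\circ q_n$, whereas you prove unimodularity of the whole pairing matrix by matching each increasing tree $G$ to its greedy linear extension $\pi_G$ and triangularizing; your inverse formula for $p(j)$ is valid (if some vertex $u<j$ were ranked strictly between $p(j)$ and $j$, then $j$ would already have been available when the greedy algorithm chose $u$, contradicting maximality), so $G\mapsto\pi_G$ is injective and hence bijective by the count $(n-1)!$. One point to state more carefully: the lexicographic maximality that the greedy construction gives directly is for the words $(\pi^{-1}(1),\dots,\pi^{-1}(n))$, i.e.\ the order in which vertices receive ranks, and that order already yields the triangular form with $\pm1$ diagonal; comparing rank sequences $(\pi(1),\dots,\pi(n))$, as you wrote, also works but would need a small extra induction, so phrase the triangularization in terms of the words. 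Overall, the paper's soft degree argument adapts more easily to the variants of $q_n$ it later considers (where explicit coordinates would be unpleasant), and its induction produces explicit integral expressions for the dual basis elements; your proof is shorter and non-inductive, makes the unimodularity transparent through the combinatorics of increasing trees and their linear extensions, and the closed formula for the centers of $q_n$ is a useful by-product.
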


By itself, Theorem~\ref{dual-space-basis} is not very powerful.  It merely implies that $i_{n, \frac{1}{n}}^*$ is injective, which we already know from Theorem~\ref{stress-graph-length}.  However, the same strategy can prove statements about $\ker i_{n, r}^*$ for larger $r$.  Thus, we include the whole proof of Theorem~\ref{dual-space-basis} as an example proof that can be modified to prove other statements about $\ker i_{n, r}^*$.  After the proof of Theorem~\ref{dual-space-basis}, this section closes with some further questions about disk configuration spaces.

To prove Theorem~\ref{dual-space-basis} we need to compute the pairing of each $\sigma \circ q_n$ with each $(n-1)$--edge ordered forest $G$.  The pairing is denoted here by $\langle G, \sigma \circ q_n\rangle$ and is given by the degree of the composition
\[\alpha_G \circ i_{n, \frac{1}{n}} \circ \sigma \circ q_n \co (S^1)^{n-1} \rightarrow \Conf_{n, \frac{1}{n}} \rightarrow \Conf_{n, \frac{1}{n}} \rightarrow \Conf_n \rightarrow (S^1)^{n-1}.\]

\begin{lemma}\label{tree-order}
Let $\sigma \in S_n$ be a permutation such that $\sigma(1) = 1$, and let $G$ be an $(n-1)$--edge ordered forest.  The pairing $\langle G, \sigma \circ q_n\rangle$ has value $0$ if $\sigma$ reverses the order of any pair $i, j$ such that $i \rightarrow j$ is an edge of $G$, and has value $\sign(\sigma)$ if $\sigma$ preserves the order of every such pair.
\end{lemma}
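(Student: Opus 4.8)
The plan is to compute the degree of the self-map
\[
F := \alpha_G \circ i_{n, \frac1n} \circ \sigma \circ q_n \co (S^1)^{n-1} \to (S^1)^{n-1}
\]
(whose degree is $\langle G, \sigma\circ q_n\rangle$) by writing the disk centers down explicitly, homotoping $F$ to an affine self-map of the torus, and reading off the answer as the determinant of a $0$--$1$ matrix.

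First I would unwind the recursion defining $q_n$. Write $u_\theta$ for the unit vector at angle $\theta$. At each stage $k$ the requirement that the medium disk of radius $\frac{k-1}{n}$ and the newly placed disk of radius $\frac1n$ both fit inside the stage-$k$ disk of radius $\frac kn$ forces, by the triangle inequality, both placements to be completely rigid; solving the resulting recursion shows that, writing $c_a$ for the center of disk $a$, the disk placed at stage $k$ has
\[
c = -\tfrac1n\bigl(u_{\theta_k} + u_{\theta_{k+1}} + \cdots + u_{\theta_{n-1}}\bigr) + \tfrac{k-1}{n}\,u_{\theta_{k-1}}
\]
(with the second term read as $0$ when $k=1$, and the sum empty when $k=n$). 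Let $\kappa(a)$ denote the stage at which disk $a$ is placed in $\sigma\circ q_n$; then $\kappa(1)=1$, $\sign\kappa=\sign\sigma$, and ``$\sigma$ reverses the edge $i\to j$'' means exactly $\kappa(i)>\kappa(j)$. For an edge $a\to b$ of $G$, subtracting the two center formulas and telescoping the partial sums gives
\[
c_b - c_a = \epsilon_e\,\tfrac{m_e}{n}\,u_{\theta_{m_e-1}} + R_e,
\]
where $m_e := \max(\kappa(a),\kappa(b))$, $\epsilon_e=+1$ if $\sigma$ does not reverse $a\to b$ and $-1$ if it does, and $R_e$ is an explicit vector whose surviving coefficients sum to at most $\frac{m_e-2}{n}$, so that $\abs{R_e} < \frac{m_e}{n} = \abs{\epsilon_e\frac{m_e}{n}u_{\theta_{m_e-1}}}$ at every point.

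Because of this strict inequality the straight-line homotopy $\epsilon_e\frac{m_e}{n}u_{\theta_{m_e-1}} + tR_e$ never passes through the origin, so $F$ is homotopic to the map $F_0$ whose coordinate indexed by $e$ is the angle of $\epsilon_e u_{\theta_{m_e-1}}$, namely $\theta_{m_e-1}$ when $\epsilon_e=1$ and $\theta_{m_e-1}+\frac12$ when $\epsilon_e=-1$. This $F_0$ is an affine self-map of $(S^1)^{n-1}$, so $\deg F = \deg F_0 = \det M$, where $M$ is the integer matrix whose row indexed by an edge $e$ (the edges of $G$ ordered by their terminal vertices, as in the convention that pins down $d\theta^{\wedge G}$) is the coordinate covector dual to $\theta_{m_e-1}$.

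It remains to compute $\det M$, which is the only genuinely combinatorial step. The claim is that $e\mapsto m_e$ is injective on $E(G)$ precisely when $\sigma$ reverses no edge. If it is not injective then, as there are $n-1$ edges and $n-1$ possible values, some stage $m\ge2$ is missed; the disk $v$ placed at stage $m$ then has all its $G$--neighbors placed later, and since $v\ne1$ its parent $p<v$ yields a reversed edge $p\to v$. Conversely, if $a_0\to b_0$ is reversed while $e\mapsto m_e$ is injective, then walking from $a_0$ up the tree $G$ toward the root $1$ forces the next parent edge to be reversed at every step, giving a strictly increasing chain of stages and contradicting $\kappa(1)=1$. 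Thus in the reversing case two rows of $M$ coincide and $\det M=0$, while in the non-reversing case $m_e=\kappa(b)$ for the head $b$ of each edge $e$, so $M$ is (after the harmless shift $j\mapsto j-1$ identifying head indices with coordinate indices) the permutation matrix of $j\mapsto\kappa(j)$ on $\{2,\dots,n\}$, and $\det M=\sign(\kappa|_{\{2,\dots,n\}})=\sign\sigma$. This is exactly the dichotomy asserted. I expect the main obstacle to be the bookkeeping in the center formula --- in particular, keeping track of how the rigid placement at stage $k$ translates the entire stage-$(k{-}1)$ sub-configuration --- together with verifying the uniform bound $\abs{R_e}<\frac{m_e}{n}$; once those are in place, the homotopy to $F_0$ and the evaluation of $\det M$ are routine.
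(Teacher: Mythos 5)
Your proposal is correct, but it takes a genuinely different route from the paper. The paper first trades $\sigma$ for a relabeled graph via $\langle G, \sigma \circ q_n\rangle = \sign(\sigma)\langle \sigma(G), q_n\rangle$, then argues softly: in the reversing case it finds a locally maximal vertex $k$ on the path from the root in $\sigma(G)$ and observes that the angle at disk $k$ between its two path-neighbors never equals $\pi$ (since those neighbors lie in a medium-sized disk that disk $k$ is outside), so the composite misses a point of the torus and has degree $0$; in the non-reversing case it homotopes each coordinate to the identity using that the output angle always stays acute to the corresponding $\theta_j$. You instead make everything quantitative: the rigidity of each recursive placement gives an exact formula for the centers, the difference along an edge $a \to b$ is a dominant term $\pm\frac{m_e}{n}u_{\theta_{m_e-1}}$ (with $m_e = \max(\kappa(a),\kappa(b))$) plus an error of norm at most $\frac{m_e-2}{n}$, so the whole angle map linearizes to an affine torus map whose degree is the determinant of a $0$--$1$ matrix; the dichotomy then becomes the combinatorial statement that $e \mapsto m_e$ is injective exactly when no edge is reversed, which you prove correctly (your walk-to-the-root argument, using that an $(n-1)$--edge ordered forest is a spanning tree rooted at $1$ with parent less than child, is the key step). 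I checked your center formula and the error bound, and they are right; your non-reversing case is essentially the paper's acute-angle homotopy made explicit, while your reversing case (two equal rows, so the linearized map factors through a lower-dimensional torus) replaces the paper's missing-angle non-surjectivity argument. What your approach buys is a uniform, checkable computation (degree as $\det M$, sign read off from the permutation $\kappa$ restricted to $\{2,\dots,n\}$) at the cost of more bookkeeping; the paper's argument avoids all coordinates but needs the separate ad hoc observation for the degree-$0$ case. The only small things to tighten in a final write-up: state explicitly that every vertex $j \geq 2$ of $G$ has a unique parent (so heads of edges are exactly $\{2,\dots,n\}$ and the walk to the root terminates), and note that your $\kappa$ is $\sigma$ or $\sigma^{-1}$ depending on how the relabeling acts, which is harmless since $\sign(\sigma^{-1}) = \sign(\sigma)$ and the reversal condition is stated invariantly.
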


\begin{proof}
It is equivalent and more intuitive to think of $\sigma$ acting on $G$ instead of on $q_n$.  Let $\sigma(G)$ be the graph that has edge $i \rightarrow j$ whenever $G$ has an edge $\sigma(i) \rightarrow \sigma(j)$, so that we have
\[\langle G, \sigma \circ q_n\rangle = \sign(\sigma)\langle \sigma(G), q_n\rangle.\]
The sign comes from reordering the edges, which changes the orientation of the angle map.

First we show that the pairing has value $0$ if $\sigma$ reverses the order of some pair of vertices that are adjacent in $G$.  In this case $\sigma(G)$ has an edge $j \rightarrow i$ with $i < j$.  Using the fact that $\sigma(1) = 1$, we consider the (directed) path in $\sigma(G)$ from $1$ to $i$, and locate the greatest vertex $k$ on this path.  Then the two neighbors of $k$ on this path are both less than $k$; we denote them by $a$ and $b$ so that $\sigma(G)$ has edges $a \rightarrow k$ and $k \rightarrow b$.  Under $q_n$ the angle between the segments from disk $a$ to disk $k$ and from disk $k$ to disk $b$ never equals $\pi$.  (Disks $a$ and $b$ are contained in a medium-sized disk that the $k$th disk is outside.)  Thus, the composition $t_{\sigma(G)} \circ i_{n, \frac{1}{n}} \circ q_n$ is not surjective, so it must have degree $0$.

Then we show that the pairing $\langle \sigma(G), q_n\rangle$ has value $1$ if $\sigma$ preserves the order of every pair of edges that are adjacent in $G$---in other words, if $\sigma(G)$ is an ordered forest.  In fact the corresponding map $(S^1)^{n-1} \rightarrow (S^1)^{n-1}$ is homotopic to the identity.  The $j$th coordinate of the composition is the following angle: we find the unique $i$ such that $i \rightarrow j+1$ is an edge of $\sigma(G)$, and take the unit vector pointing from the center of disk $i$ toward the center of disk $j$.  This $j$th coordinate always forms an acute angle with the $j$th coordinate $\theta_j$ of the input, so we can homotope each coordinate to the identity map.  Thus in this case $\langle \sigma(G), q_n\rangle = 1$ and $\langle G, \sigma \circ q_n \rangle = \sign(\sigma)$.
\end{proof}

\begin{proof}[Proof of Theorem~\ref{dual-space-basis}]
There are $(n-1)!$ ordered forests with $n-1$ edges, because the unique parent of every vertex is an arbitrary lesser vertex.  And, there are $(n-1)!$ permutations in $S_n$ fixing the element $1$.  So, it suffices to show that every element of the dual basis to the set of $(n-1)$--edge ordered forests is a $\mathbb{Z}$--linear combination of elements $\sigma \circ q_n$.

The proof is by induction on $n$.  The base case is $n = 2$, for which there is only one ordered forest and one permutation.  Suppose $n > 2$.  We introduce notation for the ways to add an $n$th vertex to an ordered forest or an $n$th element to a permutation.  If $G$ is an ordered forest on $n-1$ vertices and $k$ is one of those vertices, then we let $G^{(k)}$ denote the ordered forest on $n$ vertices obtained by adding the edge $k \rightarrow n$ to $G$.  Similarly, if $\sigma \in S_{n-1}$ is a permutation and $l$ is between $2$ and $n$, we let $\sigma^{(l)}$ denote the permutation on $n$ elements such that 
\[\sigma^{(l)}(i) = 
\begin{cases}
\sigma(i), & \text{if } i < n \text{ and } \sigma(i) < l;\\
l, & \text{if } i = n;\\
\sigma(i) + 1, & \text{if } i < n \text{ and } \sigma(i) \geq l.
\end{cases}
\]
Then we have
\[\langle \sigma^{(l)}(G^{(k)}), q_n \rangle =
\begin{cases}
\langle \sigma(G), q_{n-1} \rangle, & \text{if } \sigma(k) < l;\\
0, & \text{if } \sigma(k) \geq l,
\end{cases}
\]
or equivalently,
\[\langle G^{(k)}, \sign(\sigma^{(l)})\cdot \sigma^{(l)} \circ q_n \rangle =
\begin{cases}
\langle G, \sign(\sigma) \cdot \sigma \circ q_{n-1} \rangle, & \text{if } \sigma(k) < l;\\
0, & \text{if } \sigma(k) \geq l.
\end{cases}
\]
For any ordered tree $G^{(k)}$ on $n$ vertices, we apply the inductive hypothesis to write the dual basis element $G^*$ as the linear combination
\[G^* = \sum_{\sigma \in S_{n-1}, \sigma(1) = 1} a_\sigma \cdot (\sign(\sigma) \cdot \sigma \circ q_{n-1}),\]
with coefficients $a_\sigma \in \mathbb{Z}$.  Then the dual basis element $(G^{(k)})^*$ is the linear combination
\[(G^{(k)})^* =
\begin{cases}
\displaystyle\sum_{\sigma \in S_{n-1}, \sigma(1) = 1} a_\sigma \cdot(\sign(\sigma^{(\sigma(k) + 1)}) \cdot \sigma^{(\sigma(k) + 1)} \circ q_n -\vspace{-10pt}\\
\hphantom{\displaystyle\sum_{\sigma \in S_{n-1}, \sigma(1) = 1} a_\sigma \cdot} - \sign(\sigma^{(\sigma(k))}) \cdot \sigma^{(\sigma(k))} \circ q_n), & \text{if } k > 1;\vspace{10pt}\\
\displaystyle\sum_{\sigma \in S_{n-1}, \sigma(1) = 1} a_\sigma \cdot (\sign(\sigma^{(\sigma(k) + 1)}) \cdot \sigma^{(\sigma(k)+1)} \circ q_n), & \text{if } k = 1.
\end{cases}
\]
In this way, we see that the various homology classes $\sigma^{(l)} \circ q_n$ do span the dual space to $H^{n-1}(\Conf_n)$.
\end{proof}

We can construct other maps similar to $q_n$ that map a product of copies of $S^1$ into $\Conf_{n, r}$, and compute the pairings of the corresponding homology classes with the ordered forests.  For instance, instead of having a medium-sized disk with $n-1$ small disks inside and one small disk outside, we could have a smaller medium-sized disk with $n-2$ small disks inside and two small disks fixed outside; this new map would produce an $(n-2)$--dimensional homology class instead of an $(n-1)$--dimensional class.  Or, we could have multiple medium-sized disks of different sizes, each with some number of small disks moving inside.

\begin{question}\label{almost-favorite}
Is there a combinatorial description of these recursively constructed maps $(S^1)^j \rightarrow \Conf_n$ that makes it easy to pair their corresponding homology classes with the ordered forests?
\end{question}

For $r \leq \frac{3}{2n + 3}$ it seems that the only obstruction to the injectivity of $i_{n, r}^*$ is the fact that no $k$ disks can be collinear if $r > \frac{1}{k}$.  This statement is formalized in the following conjecture.  For $2 \leq k \leq n$, let $\Delta_k$ denote the ``$k$--diagonal'' in $\Conf_n$ consisting of all configurations in which at least $k$ of the $n$ points are collinear, and let $j_k$ denote the inclusion of $\Conf_n \setminus \Delta_k$ into $\Conf_n$.  If $r > \frac{1}{k}$, then $\Conf_{n, r}$ is contained in $\Conf_n \setminus \Delta_k$, so automatically we have 
\[\ker j_k^* \subseteq \ker i_{n, r}^*.\]

\begin{conjecture}
Let $r \leq \frac{3}{2n + 3}$, and suppose that $\frac{1}{k} < r \leq \frac{1}{k-1}$.  Then we have 
\[\ker i_{n, r}^* = \ker j_k^*.\]
\end{conjecture}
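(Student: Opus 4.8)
The containment $\ker j_k^* \subseteq \ker i_{n, r}^*$ is the one noted just before the statement, so the plan is to prove the reverse: every $\omega \in H^*(\Conf_n)$ with $j_k^*\omega \neq 0$ also has $i_{n, r}^*\omega \neq 0$. First I would reduce to a single convenient radius. By Theorem~\ref{two-thirds} the only balanced configurations in $\Conf_{n, r}$ with $r \leq \frac{3}{2n+3}$ are diameters of $m \leq n$ disks, occurring only at $r = \frac{1}{m}$, so no balanced configuration has radius in the open interval $\bigl(\frac{1}{k}, \frac{1}{k-1}\bigr)$; Theorem~\ref{bbk} then gives $\Conf_{n, r}$ a fixed homotopy type, hence a fixed subspace $\ker i_{n, r}^*$, throughout that interval. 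It therefore suffices to prove the equality with $r$ just above $\frac{1}{k}$, where each group of at most $k-1$ disks of radius $r$ has ample room to move; the endpoint $r = \frac{1}{k-1}$, at which a $(k-1)$-disk diameter is balanced, should be handled by running the homology-class construction below at that exact radius.

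Next I would describe $\ker j_k^*$ combinatorially. For a $k$-element subset $T \subseteq \{1, \ldots, n\}$, let $\pi_T \co \Conf_n \to \Conf_k$ forget the points outside $T$ and let $\Delta_T \subset \Conf_k$ be the collinear locus, so that $\Delta_k = \bigcup_T \pi_T^{-1}(\Delta_T)$ and $\pi_T^*$ carries $\ker\bigl(H^*(\Conf_k) \to H^*(\Conf_k \setminus \Delta_T)\bigr)$ into $\ker j_k^*$. One wants to show that $\ker j_k^*$ is exactly the sum of these images over all $T$ --- the statement that the cohomology of the complement of the collinearity locus is generated by its codimension-$(k-2)$ strata. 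For a single $T$, observe that for any directed spanning tree $G$ on $T$ the angles recorded by $\alpha_G$ are all equal precisely when $T$ is collinear in the induced order, so
\[\Conf_k \setminus \Delta_T = \bigcap_G \alpha_G^{-1}\bigl((S^1)^{k-1} \setminus \{\theta_1 = \cdots = \theta_{k-1}\}\bigr),\]
the intersection being over directed spanning trees; the Thom-isomorphism computation behind Sublemma~\ref{delete-diagonal} shows that the kernel of $H^*\bigl((S^1)^{k-1}\bigr) \to H^*\bigl((S^1)^{k-1} \setminus \{\theta_1 = \cdots = \theta_{k-1}\}\bigr)$ is spanned by the degree-$(k-2)$ class $\prod_{i=2}^{k-1}(d\theta_i - d\theta_{i-1})$ and the degree-$(k-1)$ class $d\theta_1 \wedge \cdots \wedge d\theta_{k-1}$, and pulling these back along the various $\alpha_G$ should span $\ker\bigl(H^*(\Conf_k) \to H^*(\Conf_k \setminus \Delta_T)\bigr)$. (For $n = k = 4$ this is exactly Lemma~\ref{first-ker}.) The outcome is a presentation of $\ker j_k^*$ as the span of certain ordered forests and alternating sums of ordered forests, from which one reads off a complementary subspace $W \subseteq H^*(\Conf_n)$ spanned by well-chosen ordered forests and alternating sums all of whose connected components have at most $k-1$ vertices.

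It then remains to show $i_{n, r}^*$ is injective on $W$; with $\ker j_k^* \subseteq \ker i_{n, r}^*$ and the degreewise dimension count $\dim W = \dim H^*(\Conf_n) - \dim \ker j_k^*$, this forces $\ker i_{n, r}^* = \ker j_k^*$. For the injectivity I would construct homology classes in $\Conf_{n, r}$ dual to the chosen basis of $W$, using the recursive maps $q_m$ of Section~\ref{small} --- several disjoint medium-sized disks, each carrying a relabeled scaled copy of some $q_m$ with $m \leq k-1$ --- together with the more economical ``orbit-and-rotate'' maps in the spirit of $\widehat{h}_{a \to b}$ from Lemma~\ref{intermediate}, in which a small group of disks orbits inside a medium-sized disk while the whole picture is simultaneously rotated. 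The orbit-and-rotate maps are essential because a bare $q_{k-1}$-block sweeps a disk of radius $\approx (k-1)r$, too bulky to coexist with the remaining disks once $r > \frac{1}{k}$, whereas the orbit-and-rotate configurations can be packed into the unit disk at that radius, exactly as the $\widehat{h}_{a \to b}$ are packed into $\Conf_{4, \frac{1}{3}}$. One then computes the pairings of these homology classes with the ordered forests, in the manner of Lemma~\ref{tree-order}, aiming for a unimodular matrix --- upper triangular with $\pm 1$ on the diagonal, after a suitable ordering --- against the basis of $W$.

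Two steps look hard. The first is showing that $\ker j_k^*$ is no larger than the sum of the stratum kernels $\pi_T^*\bigl(\ker(H^*(\Conf_k) \to H^*(\Conf_k \setminus \Delta_T))\bigr)$: iterated use of the Lusternik-Schnirelmann theorem (Theorem~\ref{lusternik-schnirelmann}) gives only the easy inclusion, and since the loci $\Delta_T$ are real-algebraic rather than linear, Orlik-Solomon-type machinery does not apply directly, so one seems to need a Mayer-Vietoris or stratification spectral sequence argument. The second is the bookkeeping in the last step: one must simultaneously choose the basis of $W$, the family of orbit-and-rotate maps, and an ordering of both, so that all the pairings assemble into a unimodular matrix while every configuration genuinely fits in the unit disk at radius just above $\frac{1}{k}$. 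This is a uniform, all-degrees version of the ad hoc computations of Section~\ref{four}, and is closely tied to Question~\ref{almost-favorite}, which asks for exactly such a combinatorial handle on the pairings of recursively built maps with the ordered forests.
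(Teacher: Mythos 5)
You should know at the outset that this statement is stated in the paper as a conjecture: the paper offers no proof of it, and your proposal does not close it either, because the two steps you flag as ``hard'' are precisely the essential content, not finishing details. The outer skeleton is fine and matches the strategy the paper itself gestures at: Theorem~\ref{two-thirds} plus Theorem~\ref{bbk} do pin the homotopy type of $\Conf_{n,r}$ on the open interval $\bigl(\frac{1}{k},\frac{1}{k-1}\bigr)$ (with the endpoint $r=\frac{1}{k-1}$ needing separate treatment, as you say), the generalization of Sublemma~\ref{delete-diagonal} to the diagonal circle in $(S^1)^{k-1}$ is a routine Gysin computation, and the linear-algebra reduction via a complement $W$ is sound. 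But the first genuine gap --- showing that $\ker j_k^*$ is no larger than the span of the classes pulled back from the diagonal kernels over all $k$-element subsets $T$ --- amounts to computing the cohomology of the space of $n$ points in the disk with no $k$ collinear, or at least the kernel of restriction to it. Nothing in the paper does this beyond the case $n=4$, $k=3$ (Lemmas~\ref{first-ker} and~\ref{intermediate}), where it is verified by an ad hoc dimension count against explicitly constructed homology classes; the loci $\Delta_T$ are nonlinear, intersect one another in complicated strata, and the Lusternik--Schnirelmann theorem gives only the inclusion you already have. A Mayer--Vietoris or stratification spectral sequence ``should'' do it is a statement of hope, not an argument.

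The second gap is of the same nature. Producing, for $r$ just above $\frac{1}{k}$ (and at $r=\frac{1}{k-1}$ itself), a family of torus maps into $\Conf_{n,r}$ whose homology classes pair unimodularly with a chosen forest basis of $W$ is exactly the content of Question~\ref{almost-favorite}, which the paper leaves open; Theorem~\ref{dual-space-basis} and Lemma~\ref{tree-order} only handle the single block $q_n$ at $r=\frac{1}{n}$, and the ``orbit-and-rotate'' packing constraint you invoke is the whole geometric difficulty (one must verify that configurations realizing every needed forest, with components of size up to $k-1$, actually fit at radius just above $\frac{1}{k}$, and that the resulting pairing matrix is triangular under some ordering --- neither is established by analogy with $\widehat{h}_{a\to b}$ in $\Conf_{4,\frac{1}{3}}$). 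So what you have is a plausible research program, consistent with the paper's own suggested tools, but with both load-bearing steps unproven; as a proof of the conjecture it does not go through.
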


\begin{question}
Let $r$ and $k$ be as above, so that $\Conf_{n, r} \subseteq \Conf_n \setminus \Delta_k$.  Is $\Conf_{n, r}$ a deformation retract of $\Conf_n \setminus \Delta_k$?
\end{question}

\begin{question}
For general $r$, is it possible to describe the homotopy type of $\Conf_{n, r}$ only in terms of the balanced configurations of radius at most $r$, as in Morse theory?
\end{question}

\begin{question}
How can Theorem~\ref{two-thirds} be generalized to a meaningful statement about all dimensions $d$?
\end{question}

\section{Conclusion}

It appears that almost all possible questions about this subject are open.  The most ambitious questions stated in this paper are Conjectures~\ref{strong} and~\ref{strong-segments}, which are the strongest possible upper bounds on the disk radius $r_{\max}(\underline{m})$ and the segment length $r_{\crit}(n)$.  The questions that seem the most approachable are the following two: Question~\ref{almost-favorite}, which asks how to construct homology classes for $\Conf_{n, r}$ when $r$ is small relative to $n$; and Question~\ref{numerically}, which asks how to compute $r_{\crit}(n)$ for small numbers of segments.

\emph{Acknowledgments.}  The author was supported by MIT and by the NSF Graduate Research Fellowship Program.

\bibliography{thesis-proposal-alpert}{}
\bibliographystyle{amsplain}
\end{document}